\newcommand{\iomega}[1]{\iota_{#1}}
\newcommand{\Lagr}{\Lambda}
\renewcommand{\=}{\coloneqq}			
\newcommand{\N}{\mathbb{N}}                     
\newcommand{\K}{\mathbb{K}}                     
\newcommand{\Z}{\mathbb{Z}}                     
\newcommand{\U}{\mathbb{U}}		
\newcommand{\R}{\mathbb{R}}                     
\newcommand{\C}{\mathbb{C}}                     
\renewcommand{\set}[2]{\left\{{#1}\mid{#2}\right\}}       
\newcommand{\dd}{\mathrm d}			
\newcommand{\Tr}{\mathrm{Tr\,}}               
\newcommand{\ind}{\mathrm{ind\,}}              
\newcommand{\Id}{I\,}                   
\newcommand{\Graph}{\mathrm{Gr\,}}
\newcommand{\Sp}{{\mathrm{Sp}}}                 
\newcommand{\GL}{{\mathrm{GL}}}                 
\newcommand{\Lin}{{\mathscr{L}\,}}      
\newcommand{\comp}{{\mathcal{K}\,}}      
\newcommand{\trace}{{S_1\,}}      
\newcommand{\SSS}{\mathrm{Sym}}         
\newcommand{\Mat}{\mathrm{Mat}}         
\newcommand{\ispec}{\iota_{\scriptstyle{\mathrm{SP}}}}
\newcommand{\iMor}{\iota_{\scriptstyle{\mathrm{MOR}}}}
\newcommand{\icon}{\iota_{\scriptstyle{\mathrm{PW}}}}
\newcommand{\spfl}{\mathrm{sf\,}}
\renewcommand{\proof}{{\sl Proof.}\hspace{5pt}}   
\newcommand{\Imm}{\mathrm{Im}}
\newcommand{\spec}{\mathfrak{s}}
\newcommand{\iindex}[1]{\mu_{\scriptscriptstyle{\mathrm{Mor}}}\left[#1\right]}
\newcommand{\cfsa}{\mathcal{CF}^{sa}}
\newcommand{\Real}{\mathrm{Re}}
\newcommand{\norm}[1]{\left\| #1 \right\|}			
\newcommand{\opnorm}[2]{\norm{#1}_{\mathscr{L}(#2)}}		
\DeclareMathOperator{\rk}{rank}		
\newcommand{\coiindex}[1]{\mathrm{n_+}\left[#1\right]}
\newcommand{\irel}{I}		
\DeclareMathOperator{\sgn}{sgn}		
\DeclareMathOperator{\diag}{diag}		
\newcommand{\iCLM}{\iota^{\scriptscriptstyle{\mathrm{CLM}}}}
\newcommand{\iMorse}[1]{\iota_{#1}}		
\newcommand{\iMas}[1]{\iota_{#1}}
\newcommand{\coindex}{\mathrm{n_+}}
\newcommand{\iiindex}{\mathrm{n_-}}
\newcommand{\email}[1]{\href{mailto:#1}{\textsf{#1}}}
\newcommand{\coiMor}{\coindex}	
\newtheorem{thm}{\sc Theorem}[section]  
\newtheorem{cor}[thm]{\sc Corollary}       
\newtheorem{lem}[thm]{\sc Lemma}            
\newtheorem{prop}[thm]{\sc  Proposition}     
\newtheorem{defn}[thm]{\sc Definition}     
\newtheorem{rem}[thm]{\sc Remark}      
\newtheorem{ex}[thm]{\sc Example}          
\newtheorem{note}[thm]{\sc Notation}
\newcommand{\trasp}[1]{{#1}^\mathsf{T}}	
\title{Spectral flow, Brouwer degree and Hill's determinant formula}
\author{Alessandro Portaluri
\thanks{The author is partially supported by   Prin 2015 ``Variational methods, with applications to
problems in mathematical physics and geometry” No.~$\mathrm{2015KB9WPT\_001}$.}, Li Wu\thanks{The author is  supported by NSFC No.~$\mathrm{11425105}$.}}
\date{\today}
\begin{document}
 \maketitle

\begin{abstract}
In 2005 a new topological invariant defined in terms of the Brouwer degree of a determinant map, was introduced by Musso, Pejsachowicz and the first name author for counting the conjugate points along a semi-Riemannian geodesic. This invariant was defined in terms of a suspension of a complexified family of linear second order Dirichlet boundary value problems. 

In this paper, starting from this result,  we generalize this invariant to a general self-adjoint Morse-Sturm system and we prove a new spectral flow formula.  Finally we discuss the relation between this spectral flow formula and the Hill's determinant formula and we apply this invariant for detecting instability of periodic orbits of a Hamiltonian system. 

\vskip0.2truecm
\noindent
\textbf{AMS Subject Classification:} 58J30, 47H11, 55M25, 58J20.
\vskip0.1truecm
\noindent
\textbf{Keywords:} Brouwer degree, Trace formula, Spectral flow, Hill's determinant formula, Elliptic boundary value problems.
\end{abstract}

\tableofcontents


\section*{Introduction}

The investigation of the relation between the variational properties of a critical point and geometrical properties of the solution space of its associated linearized equation at this critical point was firstly recognized by Jacobi who proved  that the  Lagrangian action functional of a regular one dimensional problem is  minimized along  solutions of the Euler-Lagrange equation up to first conjugate point. 

Starting on this result, Morse  worked out a precise formula  between the  {\em Morse index\/} of a critical point of Lagrangian action functional of a regular one dimensional problem and an integer (which somehow generalizes the total number of conjugate points along a geodesic in a Riemannian case)   counted along this curve. Since then,  many generalizations were obtained to higher order ordinary differential operators,  minimal surfaces and elliptic partial differential operators and so on. 

The importance of such a relation is due to its central role played in  a number of questions related to the dynamical properties of  Lagrangian systems as well as in many geometrical problems especially when Duistermaat, in his celebrated and deep paper \cite{Dui76}, was able to prove a precise relation between the Morse index and the Maslov index, namely a  topological invariant intersection index defined  in the Lagrangian Grassmannian manifold. 

Starting from Duistermaat aforementioned paper,  several  different relations were  obtained in the last decades and several applications on  bifurcation theory to Morse-Floer homology or to stability of periodic orbits of Hamiltonian systems, were proven. 

The common denominator of all of these proofs is a sort of  {\em spectral flow formula\/} relating the spectral flow (which is the right substitute to the Morse index in the strongly indefinite situation, which usually appears in  the Hamiltonian case) with the Maslov (intersection) index.

Roughly speaking, it is well-known, in fact,  that  the spectral flow of a path of (bounded)  self-adjoint Fredholm  operators $t \mapsto A_t$  is the integer given by the number of negative eigenvalues of $A_a$ that become positive as the parameter $t$  goes from $a$ to $b$ minus the number of positive eigenvalues of $A_a$ that   become negative. By the topological characterization  of the space of all bounded self-adjoint Fredholm operators and the definition of  the spectral flow it follows  that if one  of (and hence all) the operators in the path have a finite Morse index, then the spectral flow of  a path $A$ is nothing but the difference between the Morse indexes at the end points. Thus spectral flow appears to be the right  substitute of the Morse index  in the framework of strongly  indefinite functionals.

In a series of paper (cfr. \cite{PPT04, MPP05, MPP07, Por09, Por11,  PW14c, HP17, HPY19} and references therein), such index theorems were used  for detecting the bifurcation along a trivial branch of critical points for family of functionals (maybe in the strongly indefinite situation).  Other interesting applications were provided in the last decades for  establishing the  linear stability and instability properties for critical points of regular and singular Lagrangian system (cfr. \cite{BJP14a,BJP14b, HP19a, HPY19}) or to detect the bifurcation in some elliptic PDE's on bounded domains (cfr. \cite{PW14a, PW14b} and references therein) in which among others authors provided a new proof to the Morse-Smale  index theorem \cite{Sma65,Sma67}.

Another interesting application of a Morse type index theorem is 
to graduate the Morse-Floer homology complex (cfr.  \cite{APS08} and references therein).
In general the spectral flow depends upon the homotopy class of the whole path and  not only on its ends.   What we  propose in this paper is  
\begin{itemize}
\item[-] a definition of a new topological invariant named {\em index-degree\/} and denoted by $\icon$ defined in terms of a line integral of the trace of an operator-valued  one form defined by  a suspension of a complexified family of boundary value problems. 
\item[-] an abstract version of the Morse index theorem which can 
be summarize as the equality between the spectral flow of a path of 
self-adjoint unbounded Fredholm operators  and $\icon$ which generalize the conjugate  index introduced some years ago by author in \cite{MPP05}. 
\end{itemize}
As by-product, we provide a different proof to the Morse index theorem proved in \cite{MPP05}. As the reader recognizes,  $\icon$-index is very close  to other topological invariants  which detect gauge anomalies (cf. \cite{Ati84}). Related ideas in the context of Sturm-Liouville boundary value problems  can be found in \cite{GST96}.

\subsection*{Notation}

At last, for the sake of the reader, let us introduce some common notation that we shall use henceforth without further comments  throughout the paper. 
\begin{itemize}
\item The pair $(H, \langle \cdot, \cdot \rangle)$ will denote a complex separable  Hilbert space
\item $\Lin(H)$ (resp. $\Lin_s(H)$) denotes the  Banach space of linear bounded (resp. bounded an selfadjoint) operators on $H$ with respect to the operator norm $\norm{\cdot}_{\mathscr L(H)}$; $\comp(H)$ (resp. $\comp_s(H)$) denotes the subspace of $\Lin(H)$ of compact (resp. compact and symmetric) operators on $H$. 
\item $\trace(H)$ denotes the set of trace class linear operators.
\item Given $A \in \Lin(H)$ we denote by $A^*$ its adjoint. We let $|A|\=(A^*A)^{1/2}$.
\item $\Mat(N,\K)$ denotes the space of $N \times N$ matrices over the field $\K$. $\SSS(N)$ denotes the space of $N \times N$ real symmetric matrices and by $\SSS^+(N)$ the set of all positive definite   $N \times N$ real symmetric matrices. 
\item  Let  $X\subset \C$ be an open subset of the complex plane and  let $W$ be a normed vector space. With a slight abuse of notation, we say that $F:X \ni z\mapsto F(z) \in W$ is a $\mathscr C^1$-map or a map of regularity class $\mathscr C^1$, if it has regularity class  $\mathscr C^1$ as a map from on  open subset of $\R^2$ into $W$.
\item We denote by $\dd$ the exterior differential and finally  by $\Omega$ the rectangle 	defined by 
\[
\Omega\=\Set{z:=t+is| t \in [0,1] \textrm{ and } s \in [-1,1]}\subseteq \C.
\]

\end{itemize}

\subsection*{Acknowledgements}
The  first name author wishes to thank all faculties and staff of the Mathematics Department at the Shandong University (Jinan) as well as the Department of Mathematics and Statistics at the Queen's University (Kingston)  for providing excellent working conditions during his stay and especially his wife, Annalisa, that  has been extremely supportive of him throughout this entire period  and has made countless sacrifices to help him  getting to this point.


\section{Functional analytic preliminaries}\label{sec:functional-preliminaries}

This section is essentially devoted to recall some elementary results that we shall need throughout the paper as well as to introduce our basic notation.  In particular we shall discuss some basic properties of trace class operator valued one-forms and we shall prove some technical functional analytic results.



 \subsection{Trace class  operators: definitions and basics properties}\label{subsec:trace-class}

 Let $(H, \langle \cdot, \cdot \rangle)$ be a complex separable  Hilbert space,   $\comp(H)$ be the ideal of compact linear operators on $H$ and let $T \in \comp(H)$ and  $T^*$ its adjoint. It is  immediate to check  that  $T^*T$ is a compact self-adjoint non-negative definite linear operator.
 
For $j \in \N^*$, we let  $ \lambda_j(T^* T)$
be the non-zero eigenvalues of $A^*A$ where each eigenvalue is repeated according to its own multiplicity and  we define the  {\em $j$-th singular value\/} or {\em $j$-th s number\/} as the non-negative number given by   $s_j(A)\=\big[\lambda_j(T^*T)^{1/2}\big]$.

We set $ \trace(H)\=\Big\{T\in \comp(H)| \sum_{j=1}^\infty s_j(T)<+\infty\Big\}$. Endowing  $\trace(H)$ with the {\em trace norm\/} given  by  $\|T\|_\trace\= \sum_{j=1}^\infty s_j(T)$, 
then  $\trace(H)$  turns into a Banach space,  termed the space of {\em trace class operators\/} and its elements are   {\em trace class operators\/} on $H$. 
\begin{rem}
We observe that the finite rank operators are dense in $\trace(H)$ with respect to the $\norm{\cdot}_\trace$. Moreover  $\|T\|_\trace \geq \|T\|$. (Cf.  to \cite[Theorem 4.1, Chapter VI, pag.105]{GGK90} for further details). An useful property we shall use throughout the paper is that  if $S \in \Lin(H)$ and $T\in \trace(H)$, then $ST\in \trace(H)$ and $ TS  \in \trace(H)$.	
 \end{rem}
 It is well-known that the trace class $\trace(H)$ is a bilateral ideal contained in the ideal of all compact operators $\comp(H)$. On $\trace(H)$, there is the  well-defined linear functional called {\em trace\/} that will be denoted by $\Tr$ and defined by 
 \[
 \Tr: \trace(H) \to \R \quad \textrm{ defined by } \quad  \Tr(T)\=\sum_{j \in \Z}\langle T e_j, e_j\rangle
 \]
where $(e_j)_{j \in \Z}\subset H$ is an orthonormal basis of $H$. Among all properties of the trace, we recall that 
 \begin{multline}\label{eq:commutativity-trace}
\textrm{ if }\quad ST, TS \in \trace(H)\quad  \Rightarrow\quad  \Tr(ST)= \Tr(TS)\\
	\textrm{and  if }\quad S \in \trace(H)\textrm { and } T\in \mathscr{L}(H)\quad  \Rightarrow \quad  \|ST\|_{ \trace} \leq \|S\|_{ \trace} \|T\|_{\mathscr L(H)} \textrm{ and } \\ \|TS\|_{ \trace}\leq \|S\|_{ \trace} \|T\|_{\mathscr L(H)}.
	\end{multline}
	The next result is the Jacobi's (determinant) formula that we shall need in the proof of the abstract trace forma. Let $z\mapsto S_z$ be a $\mathscr C^1$ map of $N\times N$ complex matrices. Then, the following formula is well-known
\begin{equation}\label{eq:jacobi}
	\dd(\det(S_z))=\Tr (\textrm{adj} (S_z)\dd S_z),
	\end{equation}
	where $\textrm{adj}(\#)$ denotes the {\em adjugate\/} (namely the transpose of the cofactor matrix) of the matrix  $\#$.
  The following result holds. 
	\begin{lem}\label{thm:lemma-3}
		Let $S \in \mathscr C^1\big(X, \GL(N, \C)\big)$. Then we have
		\[
		\Tr\big(\dd S_z S_z^{-1}\big)= \dd \log \det  S_z.
		\]
	\end{lem}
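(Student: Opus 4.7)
The plan is to derive the identity directly from Jacobi's formula \eqref{eq:jacobi}, using invertibility of $S_z$ to convert the adjugate into the inverse.

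First, I would recall that for any invertible $N \times N$ complex matrix $M$, the adjugate satisfies $\mathrm{adj}(M) = \det(M)\, M^{-1}$. Since $S_z \in \GL(N,\C)$ throughout $X$, this identity applies pointwise, and I can substitute it into \eqref{eq:jacobi} to obtain
\[
\dd(\det S_z) \;=\; \Tr\bigl(\det(S_z)\, S_z^{-1} \dd S_z\bigr) \;=\; \det(S_z)\,\Tr\bigl(S_z^{-1}\dd S_z\bigr),
\]
where in the last step I pulled the scalar $\det(S_z)$ out of the trace.

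Next, since $\det S_z$ is a nowhere-vanishing $\mathscr{C}^1$ complex-valued function on $X$, I can divide both sides by $\det S_z$ and recognize the left-hand side as the logarithmic derivative:
\[
\dd \log \det S_z \;=\; \frac{\dd(\det S_z)}{\det S_z} \;=\; \Tr\bigl(S_z^{-1} \dd S_z\bigr).
\]
Finally, by the cyclicity property of the trace recalled in \eqref{eq:commutativity-trace} (applied trivially in finite dimensions, or noting $S_z^{-1} \in \Lin(\C^N)$ and $\dd S_z$ is a matrix-valued one-form so the cyclic swap is justified entry-wise for each component of the form), I have $\Tr(S_z^{-1}\dd S_z) = \Tr(\dd S_z\, S_z^{-1})$, giving the claimed identity.

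There is essentially no obstacle here; the only subtlety worth a sentence of care is that $\dd\log\det S_z$ must be interpreted as the logarithmic differential of a nowhere-vanishing complex function (i.e.\ a closed one-form, not the differential of a globally defined single-valued branch of the logarithm), but this is precisely what $\dd(\det S_z)/\det S_z$ means, so the equality holds globally on $X$ without monodromy issues.
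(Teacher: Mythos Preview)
Your proof is correct and follows exactly the route the paper sets up: it states Jacobi's formula \eqref{eq:jacobi} immediately before the lemma and then simply refers to \cite{MN99} for the proof, which is precisely the adjugate-to-inverse substitution you carry out. Your remark about interpreting $\dd\log\det S_z$ as the logarithmic differential is apt and consistent with how the paper uses the identity later.
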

\proof 
For the proof we refer the interested reader to \cite{MN99} and references therein.
\qed
\begin{lem}\label{thm:ffava}
		Let $H,L$ be Hilbert spaces, $A\in\Lin(H,L)$ and let $H_0$ be a finite co-dimensional  subspace of $H$.
				If $A_0\= A|_{H_0}\in \trace(H_0)$, then $A \in \trace(H)$.
	\end{lem}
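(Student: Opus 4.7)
The plan is to decompose $H$ as an orthogonal direct sum $H = H_0 \oplus H_1$ with $\dim H_1 < \infty$, and to split $A$ accordingly into a trace class piece and a finite-rank piece. Since $H_0$ is closed and finite-codimensional in the Hilbert space $H$, I may take $H_1 \= H_0^{\perp}$; the associated orthogonal projections $P_0 : H \to H_0$ and $P_1 : H \to H_1$ are then bounded of norm at most one.

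First I would write $A = A\,P_0 + A\,P_1$. For the second summand, the range of $A\,P_1$ is contained in $A(H_1)$, which is finite-dimensional because $H_1$ is. Hence $A\,P_1$ is a finite-rank operator from $H$ to $L$, and in particular lies in $\trace(H)$.

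For the first summand, note that $A\,P_0 = A_0 \circ P_0$, where $P_0$ is viewed as an element of $\Lin(H,H_0)$. Since by hypothesis $A_0 \in \trace(H_0)$ and $P_0$ is bounded, the ideal property of the trace norm (the analogue of the estimate in \eqref{eq:commutativity-trace}) yields
\[
\|A_0\,P_0\|_{\trace} \;\leq\; \|A_0\|_{\trace}\,\|P_0\|_{\mathscr L(H,H_0)} \;<\; +\infty,
\]
so $A_0\,P_0 \in \trace(H)$. Summing, $A = A_0\,P_0 + A\,P_1 \in \trace(H)$, as required.

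The only subtlety is a mild abuse of notation in the statement: $A$ takes values in $L$ rather than in $H$, so one must check that the ideal property of the trace class persists for operators between \emph{distinct} Hilbert spaces. This is however immediate from the Schmidt decomposition together with the standard bound $s_j(T S) \leq \|S\|\,s_j(T)$ on singular values, which is precisely the ingredient invoked above. Consequently the argument poses no real obstacle and the decomposition strategy closes the proof.
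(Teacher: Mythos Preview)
Your proof is correct and in fact cleaner than the paper's. You go straight for the decomposition $A = A_0 P_0 + A P_1$, observe that the first summand is a bounded operator composed with a trace class one (hence trace class by the ideal property) and the second summand is finite rank, and you are done. The paper instead works with the positive part $|A|=(A^*A)^{1/2}$: it shows the operator inequality $A_0^*A_0 \ge (i^*|A|i)^2$ on $H_0$, uses operator monotonicity of the square root to conclude that the compression $i^*|A|i$ is trace class, and then adds back the finite-rank off-diagonal blocks of $|A|$ to get that $|A|$, and hence $A$, lies in the trace class. Both arguments rely on the same splitting $H=H_0\oplus H_0^\perp$ and the same underlying ideal property; yours simply avoids the detour through $|A|$ and the comparison of positive operators, at the cost of making explicit (as you do in your final paragraph) that the two-sided ideal property holds for trace class operators between distinct Hilbert spaces. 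Neither approach gives more than the other here; yours is just more direct.
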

\proof
		Let $i: H_0 \mapsto H$ and $j:H_0^\perp\to  H$  be the injections. Then $i^*,j^*$ are the orthogonal projections from $H$ to $H_0$ and $H_0^\perp$ respectively.
		We observe that $A|_{H_0}=A i$. 
		Since $|A|=i^*|A|i+i^*|A|j+j^*|A|i+j^*|A|j$, we have
		$A_0^*A_0=i^*A^*Ai=(i^*|A|i)^2+(i^*|A|j)(i^*|A|j)^*\ge (i^*|A|i)^2$. 
		Since $A_0$ is in the trace class, $i^*|A| i$ is also in the trace class and being $H_0 $  finite co-dimensional subspace of $H$, $|A|$ is an operator in the trace class. So A is in the trace class and this concludes the proof. 	
\qed 

\begin{lem}\label{thm:Sobolev-inclusion-trace-class}
		The Sobolev embedding 
		\begin{equation}
		\iota: H^2\big([0,1], \C^N\big)\hookrightarrow  L^2\big([0,1], \C^N\big)
		\end{equation}
		is in the trace class. 
	\end{lem}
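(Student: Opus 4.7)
The plan is to reduce the statement, via Lemma~\ref{thm:ffava}, to an explicit computation on a suitable finite codimensional subspace where $\iota$ diagonalizes in a Dirichlet trigonometric basis.

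First, I would pass to the scalar case. The Hilbert space $H^2([0,1],\C^N)$ decomposes as the orthogonal direct sum of $N$ copies of $H^2([0,1],\C)$, and $\iota$ respects this decomposition; so it suffices to prove the claim for $N=1$. Next, I would introduce the Dirichlet subspace
\[
H_0 \= \bigl\{u\in H^2([0,1],\C)\ :\ u(0)=u(1)=0\bigr\},
\]
which is the kernel of the bounded evaluation map $u\mapsto (u(0),u(1))$ and is therefore closed of codimension $2$ in $H^2([0,1],\C)$. By iterating the one-dimensional Poincar\'e inequality on $H_0$, the seminorm $\|u''\|_{L^2}$ is in fact equivalent to the full $H^2$-norm on $H_0$; since the trace class ideal is invariant under equivalent Hilbertian inner products, I would endow $H_0$ with the equivalent inner product $\langle u,v\rangle_0 \= \int_0^1 u''\,\overline{v''}\,dx$.

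The core calculation is to diagonalize $\iota_0^{*}\iota_0$, where $\iota_0\=\iota|_{H_0}$. Set $e_n(x)\=\tfrac{\sqrt{2}}{(n\pi)^2}\sin(n\pi x)$ for $n\ge 1$. Since $e_n''(x)=-\sqrt{2}\sin(n\pi x)$ and $\{\sqrt{2}\sin(n\pi x)\}_{n\ge 1}$ is the standard orthonormal basis of $L^2([0,1])$ (Dirichlet eigenfunctions of $-d^2/dx^2$), the map $u\mapsto u''$ is a unitary isomorphism from $(H_0,\langle\cdot,\cdot\rangle_0)$ onto $L^2([0,1])$. Hence $\{e_n\}_{n\ge 1}$ is a complete orthonormal basis of $H_0$, and a short calculation using $\int_0^1\sin(n\pi x)\sin(m\pi x)\,dx=\tfrac{1}{2}\delta_{nm}$ yields
\[
\iota_0^{*}\iota_0\,e_n\;=\;\frac{1}{(n\pi)^4}\,e_n.
\]
Consequently the singular values of $\iota_0$ are $s_n(\iota_0)=1/(n\pi)^2$ and
\[
\|\iota_0\|_{\trace}\;=\;\sum_{n\ge 1}\frac{1}{(n\pi)^2}\;=\;\frac{1}{6}\;<\;+\infty.
\]

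Finally, Lemma~\ref{thm:ffava} applied to $A=\iota$ with finite codimensional subspace $H_0$ would upgrade $\iota_0\in\trace(H_0)$ to $\iota\in\trace\bigl(H^2([0,1],\C^N)\bigr)$, concluding the argument. The only mildly delicate point is the claim that $u\mapsto u''$ is a unitary isomorphism $H_0\to L^2$; but on a one-dimensional interval this reduces to the elementary unique solvability of the Dirichlet problem $u''=f$ with $u(0)=u(1)=0$, so no elliptic regularity machinery is needed and the main obstacle of the proof is effectively bypassed.
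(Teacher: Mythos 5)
Your proposal is correct and follows essentially the same route as the paper: restrict $\iota$ to the Dirichlet subspace $H^2\cap H^1_0$, identify the singular values of the restricted embedding as $1/(n\pi)^2$ via the isomorphism $D^2$ onto $L^2\big([0,1],\C\big)$, and then invoke Lemma~\ref{thm:ffava} to pass from this finite-codimensional subspace to all of $H^2$. Your version merely makes the diagonalization explicit (and reduces to the scalar case first), which is a clarification of the paper's terser computation rather than a different argument.
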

	\begin{proof}
		In order to prove that $\iota: H^2\big([0,1], \C^N\big)\hookrightarrow  L^2\big([0,1], \C^N\big)$ is in the trace class, we start to observe that 
		\[
		|\|u\||= \|u\|_{L^2}+\|D^2 u\|_{L^2}
		\]
		is an equivalent norm in $H^2$. Thus we have that the map 
		\[
		D^2: H^2\big ([0,1],\mathbb{C}^N\big)\cap  H^1_0\big([0,1],\mathbb{C}^N\big)\to L^2\big([0,1],\mathbb{C}^N\big)
		\]
		 is an isomorphism.
		We now observe that $D^2: H^2\big (I,\mathbb{C}^N\big)\cap  H^1_0\big(I,\mathbb{C}^N\big)\subset L^2\big(I,\mathbb{C}^N \big)\to L^2\big(I,\mathbb{C}^N\big)$ is in the trace class since it is self-adjoint and the spectrum is given by $\{k^2\pi^2\}_{k\in \mathbb{N}}$.
		
		So $\iota|_{H^2\big(I,\mathbb{C}^N\big)\cap H^1_0\big(I,\mathbb{C}^N\big)}$ is in the trace class. The conclusion now readily follows by using Lemma~\ref{thm:ffava} once observed  that the space  $H^2\big([0,1], \C^N\big)\cap H_0^1\big([0,1], \C^N\big)$ is a finite-codimensional subspace of $H^2\big([0,1], \C^N\big)$. This concludes the proof.
\end{proof}


 \subsection{Trace class  operator valued one-forms}

With a slight abuse of notation, given an open subset  $X\subset \C$, we let $L \in \mathscr C^1\big(X, \trace(H)\big)$ meaning that  $L$ is a  a map of regularity class $\mathscr C^1$  from the   open subset of $X \subset \R^2$ into $\trace(H)$. The next result provides a useful relation between the trace and the exterior differential for an operator valued function. More  precisely, the following result holds. 
\begin{lem}\label{thm:fava}
Let $L \in \mathscr C^1\big(X, \trace(H)\big)$, $F\=\Id+L$ and we assume that for each $z \in X$, $\norm{L_z}_{\Lin(H)} <1$. 
Under the above notation, the following equality holds
\[
\dd \Tr\log F_z=\Tr(\dd F_z\,F_z^{-1}).
\]
\end{lem}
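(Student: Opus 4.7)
The plan is to expand $\log F_z$ as a power series that converges in the trace norm, differentiate term by term under the trace, and then collapse the resulting series into $\Tr\bigl(F_z^{-1}\,dF_z\bigr)$ by using the cyclic property of the trace recalled in \eqref{eq:commutativity-trace}.

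Since $\|L_z\|_{\Lin(H)}<1$, the natural candidate for $\log F_z=\log(I+L_z)$ is the formal Mercator series
\begin{equation*}
\log F_z \= \sum_{k=1}^{\infty}\frac{(-1)^{k+1}}{k}\,L_z^{k}.
\end{equation*}
I would first check that this series converges in $\trace(H)$. Because $\trace(H)$ is a two-sided ideal in $\Lin(H)$, the submultiplicativity estimate \eqref{eq:commutativity-trace} yields $\|L_z^{k}\|_\trace\le \|L_z\|_\trace\,\|L_z\|_{\Lin(H)}^{k-1}$, so the partial sums are Cauchy in trace norm, uniformly on compact subsets of $X$ (using the local boundedness granted by $L\in\mathscr C^1(X,\trace(H))$). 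Hence $\log F_z\in \trace(H)$ and $\Tr\log F_z$ is well defined and $\mathscr C^1$ on $X$.

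The second step is term-by-term differentiation. For each $k\ge 1$ Leibniz's rule gives
\begin{equation*}
d(L_z^{k})=\sum_{j=0}^{k-1}L_z^{j}\,dL_z\,L_z^{k-1-j},
\end{equation*}
and the cyclic property in \eqref{eq:commutativity-trace} collapses each of the $k$ summands to $\Tr\bigl(L_z^{k-1}\,dL_z\bigr)$, so that $d\Tr(L_z^{k})=k\,\Tr\bigl(L_z^{k-1}\,dL_z\bigr)$. The interchange of $d$ and the infinite sum is again justified by the ideal bound: the general term of the differentiated series is controlled in trace norm by $\|L_z\|_{\Lin(H)}^{k-1}\,\|dL_z\|_{\trace}$, which is summable uniformly on compacta since $L\in\mathscr C^1(X,\trace(H))$.

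Summing and recognizing the geometric series, which converges in operator norm as $\|L_z\|_{\Lin(H)}<1$, yields
\begin{equation*}
d\,\Tr\log F_z=\sum_{k=1}^{\infty}(-1)^{k+1}\,\Tr\bigl(L_z^{k-1}\,dL_z\bigr)=\Tr\!\left(\sum_{m=0}^{\infty}(-L_z)^{m}\,dL_z\right)=\Tr\bigl((I+L_z)^{-1}\,dL_z\bigr).
\end{equation*}
Since $dF_z=dL_z$ and $F_z^{-1}\in\Lin(H)$, a final application of the cyclic property \eqref{eq:commutativity-trace} gives the announced identity $d\,\Tr\log F_z=\Tr\bigl(dF_z\,F_z^{-1}\bigr)$. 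The only delicate point in the argument is the passage of $d$ inside the (infinite) trace, and this is precisely what the uniform trace-norm convergence on compacta takes care of.
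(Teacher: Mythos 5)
Your proposal is correct and follows essentially the same route as the paper's own proof: expand $\log(I+L_z)$ as the Mercator series convergent in trace norm, differentiate term by term using the Leibniz rule and the cyclicity of the trace to get $d\,\Tr(L_z^k)=k\,\Tr(L_z^{k-1}dL_z)$, justify the interchange of $d$ and the sum by locally uniform trace-norm convergence, and resum the geometric series into $(I+L_z)^{-1}$. No substantive differences.
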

\proof
Since $\opnorm{L_z}{H} <1$,  then we get that $\displaystyle\log F_z=\log(\Id+  L_z) = \sum_{k=1}^\infty (-1)^{k+1} \dfrac{ L_z^k}{k}$ is well-defined. Moreover  $\sum_{k=1}^\infty (-1)^{k+1} \frac{\Tr  L_z^k}{k}$ converges in $\trace(H)$ as directly follows by observing that 
\[
\left\|\sum_{k=m}^n (-1)^{k+1} \dfrac{\Tr  L_z^k}{k}\right\|_{ \trace}\le \left|\sum_{k=m}^n (-1)^{k+1} \dfrac{  \|L_z\|_{\mathscr{L}(H)}^{k-1}}{k}\right|\|L_z\|_{ \trace(H)}.
\] 
By using the  linearity of the trace, we get  
\begin{equation}
\Tr \log(\Id+ L_z) = \sum_{k=1}^\infty (-1)^{k+1} \dfrac{\Tr  L_z^k}{k}
\end{equation}
and by an induction  argument, we get  that  $ \dd L_z^k= \sum_{m=1}^{k} L_z^{m-1} \dd L_z L_z^{k-m}$ for all $z \in X$. Thus, we have 
\begin{multline}\label{eq:1f1}
\Tr \dd L_z^k=  \sum_{m=1}^{k} \Tr\big[ L_z^{m-1} \dd L_z  L_z^{k-m}\big]=\sum_{m=1}^{k} \Tr\big[ L_z^{k-1} \dd L_z\big]= k \Tr\big[L_z^{k-1} \dd L_z\big]= k \Tr\big( \dd L_zL_z^{k-1}\big) 
\end{multline}
where, in the second and last equality of Equation~\eqref{eq:1f1}, we used the commutativity property described in Formula \eqref{eq:commutativity-trace}.
By using once again  the linearity of $\Tr$ and $\dd$, we finally  get 
\begin{multline}
\dd \Tr \log F_z\=\dd \Tr \log(\Id+L_z)= \sum_{k=1}^\infty (-1)^{k+1} \dfrac{\Tr  \dd L_z^k}{k}
\\
= \sum_{k=1}^\infty (-1)^{k+1} \Tr \big(\dd L_z  L_z^{k-1}\big)= \Tr \big[\dd L_z \sum_{k=1}^\infty (-1)^{k+1}L_z^{k-1}\big]= \Tr\big[\dd L_z\,(\Id+ L_z)^{-1}\big]\\=\Tr(\dd F_z\,F_z^{-1})
\end{multline}
where, in the second equality, we commute $d$ and $\sum$ since 
	$\sum_{k=1}^\infty (-1)^{k+1} \Tr \big(\dd L_z  L_z^{k-1}\big)$ converges uniformly in some neighborhood of $z$ (being, in fact, $z\mapsto L_z$  of regularity class $\mathscr{C}^1$ in the trace norm topology). This concludes the proof. 
\qed

\begin{lem}\label{thm:lemma-0-bis}
Under the assumptions given in Lemma \ref{thm:fava}, we get that the spectrum $\spec( F_z)$ is discrete. Moreover, if $\lambda \in \spec(F_z)$,  then for every  $\delta >0$, we get $|\lambda-1| \leq \delta$ for all except a finite number. 
\end{lem}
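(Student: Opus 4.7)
The plan is to reduce the statement to the classical Riesz--Schauder theory for compact operators. Since the trace class is contained in the ideal of compact operators, $L_z \in \trace(H) \subset \comp(H)$, so the crucial input is just compactness of $L_z$; the trace-class regularity and the norm bound $\norm{L_z}_{\Lin(H)} < 1$ inherited from Lemma \ref{thm:fava} play only an auxiliary role.

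First I would record the trivial spectral shift
\[
F_z - \mu\,\Id \;=\; L_z - (\mu-1)\,\Id,
\]
valid for every $\mu \in \C$. This identity shows that $\mu \in \spec(F_z)$ if and only if $\mu - 1 \in \spec(L_z)$, so that $\spec(F_z) = 1 + \spec(L_z)$ as sets. Thus the question about the spectral structure of $F_z$ near $1$ is equivalent to the question about the spectral structure of the compact operator $L_z$ near $0$.

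Next I would apply the Riesz--Schauder theorem to $L_z$: the spectrum of a compact operator on an infinite-dimensional Hilbert space consists of an at most countable set of eigenvalues of finite algebraic multiplicity, each of which is isolated in $\spec(L_z)$, whose only possible accumulation point is $0$ (together with $0$ itself, which belongs to the spectrum for dimensional reasons). Translating by $1$ yields both claims at once: every point of $\spec(F_z)\setminus\{1\}$ is an isolated eigenvalue of finite multiplicity, so $\spec(F_z)$ is discrete in the stated sense; and the only possible accumulation point of $\spec(F_z)$ is $1$, which is exactly the assertion that for every $\delta>0$ the inequality $|\lambda-1|\le \delta$ fails for at most finitely many $\lambda \in \spec(F_z)$.

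I do not expect a genuine obstacle: the whole argument is a one-line corollary of Riesz--Schauder once the spectral shift identity is written down. The only thing worth mentioning is that the hypothesis $\norm{L_z}_{\Lin(H)}<1$ is not used in the spectral conclusion itself; via the Neumann series it merely guarantees that $F_z$ is invertible, hence $0\notin \spec(F_z)$, i.e.\ $-1 \notin \spec(L_z)$, which is consistent with but not needed for the discreteness and accumulation statements.
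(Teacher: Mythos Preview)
Your proposal is correct and follows essentially the same approach as the paper: both arguments invoke the Riesz--Schauder spectral theory for the compact operator $L_z$ to conclude that $0$ is its only possible accumulation point, and then translate by $1$ to obtain the corresponding statement for $F_z=\Id+L_z$. Your write-up is simply more explicit about the spectral shift identity and about which hypotheses are actually used.
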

\proof
By the spectral theory of   compact  operators, it follows that, for every $z \in X$,  $0$ is the only accumulation point for eigenvalues of $L_z$ and hence $1$ is the only possible accumulation point of eigenvalues for $F_z$. Thus, for every $\delta >0$ the eigenvalues out off the  disk $ \Set{\lambda\in \C|\lambda-1|<\delta} \subset \C$ counted with multiplicity are in a finite number. \qed

\begin{lem}\label{lm:inv_exact}
Let 	$X\subset \C$ be an open and convex set containing $0$, $W \hookrightarrow H$ be a dense injection  and we assume that
\begin{itemize}
\item   $z\mapsto G_z \in \Lin (W,H)$ is  of class $\mathscr C^1$;
\item there is a open set $Y \subset  X$ containing $0$ and such that $G_z$ is invertible for $z \in Y$;
\item $z \mapsto (G_z-G_0)G_0^{-1}\in \mathscr{C}^1(Y,\trace(H))$
\end{itemize}
Then we get that  $\Tr \dd G_z G_z^{-1}$ is a closed  one form on $X$.

Furthermore,  if  $G_z=G_0+C_z$ with $C_z\in \mathscr C^1(X,\Lin(H))$ and $G_0^{-1}\in  \trace(H)$, then  $\Tr(\dd G_zG_z^{-1})$ is a closed  one form on $Y$.
\end{lem}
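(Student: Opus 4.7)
The core idea is to realize the one-form $\omega_z \coloneqq \Tr(\dd G_z \, G_z^{-1})$ locally as an \emph{exact} differential and appeal to Lemma~\ref{thm:fava}. Since closedness is a local property, it suffices to produce a local primitive of $\omega$ near every point of $Y$.

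Near $z=0$, set $L_z \coloneqq (G_z - G_0) G_0^{-1}$ and $F_z \coloneqq G_z G_0^{-1} = \Id + L_z$. By hypothesis $L \in \mathscr C^1(Y, \trace(H))$ and $L_0 = 0$, so $\opnorm{L_z}{H} < 1$ on some neighborhood $U_0 \subset Y$ of $0$. Lemma~\ref{thm:fava} then applies and yields $\dd \Tr \log F_z = \Tr(\dd F_z \, F_z^{-1})$ on $U_0$. The algebraic identity
\[ \dd F_z \, F_z^{-1} = (\dd G_z \, G_0^{-1})(G_0 \, G_z^{-1}) = \dd G_z \, G_z^{-1} \]
then gives $\omega_z = \dd \Tr \log F_z$ on $U_0$, which is exact and in particular closed.

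For a general $z_\ast \in Y$ I would rerun the argument with basepoint $z_\ast$ in place of $0$. The key point is that the trace-class hypothesis transfers: from the identity
\[ (G_z - G_{z_\ast}) G_{z_\ast}^{-1} = (L_z - L_{z_\ast})(\Id + L_{z_\ast})^{-1} \]
together with the fact that $\Id + L_{z_\ast} = G_{z_\ast} G_0^{-1}$ is invertible in $\Lin(H)$ (because $z_\ast \in Y$), the bilateral ideal property of $\trace(H)$ gives $(G_z - G_{z_\ast}) G_{z_\ast}^{-1} \in \mathscr C^1(V_\ast, \trace(H))$ on a neighborhood $V_\ast$ of $z_\ast$. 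Shrinking $V_\ast$ so that the operator norm is less than $1$, Lemma~\ref{thm:fava} produces the local primitive $\dd \Tr \log(G_z G_{z_\ast}^{-1}) = \omega_z$ near $z_\ast$, giving closedness there.

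The \emph{furthermore} is then immediate: the additional hypotheses $G_0^{-1} \in \trace(H)$ and $C_z \in \mathscr C^1(X, \Lin(H))$ force $(G_z - G_0) G_0^{-1} = C_z G_0^{-1} \in \mathscr C^1(X, \trace(H))$ by the ideal property of $\trace(H)$, so the main assumption of the first part is automatically fulfilled and the conclusion follows. The main technical point to verify carefully is the basepoint shift in the trace-class identity above; once that is in hand, everything else reduces to a direct application of Lemma~\ref{thm:fava} and the fact that $\trace(H)$ is a two-sided ideal in $\Lin(H)$.
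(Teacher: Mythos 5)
Your proposal is correct and follows essentially the same route as the paper: a local primitive $\Tr\log(G_zG_{z_\ast}^{-1})$ obtained from Lemma~\ref{thm:fava} after verifying, via the identity $(G_z-G_{z_\ast})G_{z_\ast}^{-1}=(L_z-L_{z_\ast})(\Id+L_{z_\ast})^{-1}$ and the ideal property of $\trace(H)$, that the trace-class hypothesis transfers to an arbitrary basepoint $z_\ast\in Y$; the ``furthermore'' is handled identically. The only cosmetic differences are that you get smallness of $\opnorm{L_z}{H}$ from continuity rather than the paper's explicit Lipschitz estimate, and you use the identity $\dd F_z\,F_z^{-1}=\dd G_z\,G_z^{-1}$ directly instead of passing through $\Tr[\dd L_z(\Id+L_z)^{-1}]$.
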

\proof
For the sake of the reader, we split the proof into some steps. \\
{\bf First step.\/}  $\Tr \dd G_z G_z^{-1}$ is a closed  one form in a neighborhood of zero. 

For, we start to observe that, if $u,v \in W$, then  there exists a constant $M>0$ such that 
\begin{equation}\label{eq:prima-lemma1}
	\left|\dfrac{d}{dt}\langle G_{zt} u,v\rangle_H\right|< M|z|\norm{u}_W\norm{v}_H.
	\end{equation}
By a direct integration on $[0,1]$ it follows that 
$|\langle G_z u, v\rangle-\langle G_0 u, v\rangle_H|<M|z|\norm{u}_W\norm{v}_H$ and hence $\norm{G_z-G_0}_{\Lin(W,H)}\leq M|z|$.
By letting $D_z\=G_z-G_0$, we get that $\norm{D_z}_{\Lin(W,H)}\leq M|z|$. By letting $L_z\=D_z G_0^{-1}=(G_z-G_0)G_0^{-1}$ and setting  $\delta:= (2M\norm{G_0^{-1}}_{\Lin(W,H)})^{-1}$,   we get  that 
\[
	\norm{L_z}_{\Lin(H)}= M|z| \norm{G_0^{-1}}_{\Lin(W,H)}\leq M \delta \norm{G_0^{-1}}_{\Lin(W,H)}\leq \dfrac12\qquad \textrm{ for } |z| \leq \delta.
	\]
Now, observe that $G_zG_0^{-1}=\Id +L_z$ and the path $z\mapsto I+L_z$ is also of class $\mathscr C^1$. Since $G_zG_0^{-1}$ is a bijection, we get that $z\mapsto \Id+L_z$  is a path of linear diffeomorphisms.  Thus, in particular, the path  $z\mapsto (\Id+L_z)^{-1}$ is a  $\mathscr C^1$ path of bounded operators  on $Y$.
Then, we have
	\begin{multline}\label{eq:terza-lemma1}
	\Tr\big[\dd G_zG_z^{-1}\big]\\= \Tr\big[\dd G_z\big(G_0(\Id+L_z)\big)^{-1}\big]
	=\Tr\big[G_0\dd L_zG_0^{-1}(\Id+L_z)^{-1}\big]=\Tr\big[\dd L_z(\Id+L_z)^{-1}\big]\\
	=\dd \Tr\log(\Id+L_z)
	\end{multline}
where the last equality readily follows by using  Lemma \ref{thm:fava}. So $\Tr\big[ \dd G_z\, G_z^{-1}\big]$ is exact in a neighborhood of $0$.\\
{\bf Second step.\/}  $\Tr \dd G_z G_z^{-1}$ is a closed  one form. 

First of all, we observe that by arguing as above, we get that 
\begin{equation}\label{eq:seconda-lemma1}
\norm{G_z-G_{z'}}_{\Lin(W,H)}\leq M|z-z'| \qquad \forall\, z , z' \in X
\end{equation}
and hence also that 
\[
\norm{D_z-D_{z'}}_{\Lin(W,H)}\leq M|z-z'|\qquad \forall\, z , z' \in X.
\]
We assume that $z_0\in Y$. Then we  have 
\[
(G_z-G_{z_0})G_{z_0}= (G_z-G_{z_0})G_0^{-1}G_0G_{z_0}= (L_z-L_{z_0})G_0G_{z_0}^{-1}=(L_z-L_{z_0})(1+L_{z_0})^{-1}.
\]
 So, we get that  the path $z\mapsto (G_z-G_{z_0})G_{z_0}^{-1}$ is a path in $\mathscr C^1\big(Y,\trace(H)\big)$. Since  $(G_z-G_{z_0})G_{z_0}^{-1}\in \trace (H)$, repeating verbatim the argument given in the first step, we also get that  for each $z \in Y$, there exists a neighborhood $U_z$ such that  $\Tr[\dd G_z G_z^{-1}]$ is exact. Then we get  that the form  $\Tr[\dd F_zF_z^{-1}]$ is (globally) closed on $Y$. 
 
The last claim directly follows by the previous one, once observed that, since by assumptions $C_z\in \mathscr C^1(X,\Lin(H))$ and $G_0^{-1}\in  \trace(H)$, then $z\mapsto (G_z-G_0)G_0^{-1}=C_zG_0^{-1}$ is a $\mathscr C^1$ path in $\trace(H)$. This concludes the proof. 
\qed



\section{An abstract trace formula}\label{sec:due}

The first result of this subsection, allow us to reduce the computation of the trace of an operator-valued one form to the sum of  the trace of a closed operator-valued one form  on a finite dimensional space and the trace of  an exact operator-valued one form  on a infinite dimensional space. Even if many of the results of this paragraph hold in a more general context, here we shall restrict to the self-adjoint case. 

We start by recalling that a bounded perturbation of a compact resolvent operator has compact resolvent as soon as the resolvent set is not empty. 
\begin{lem}\label{thm:compact-resolvent-perturbation}
	Let  $A_{z_0}$ be a self-adjoint operator having  compact resolvent and for each $z \in X$ we assume that $C_z\in \Lin(H)$. Then, the operator $A_z\=A_{z_0}+C_z$ has compact resolvent for each $z \in X$.
\end{lem}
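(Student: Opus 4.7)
The plan is to exhibit an explicit point in the resolvent set of $A_z$ and show that the corresponding resolvent is compact, then use the first resolvent identity to propagate compactness to every point in the resolvent set.

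First I would note that since $C_z$ is bounded, the domain of $A_z$ equals the domain of $A_{z_0}$, so $A_z$ is a closed densely defined operator. The self-adjointness of $A_{z_0}$ gives the standard bound
\[
\bigl\|(A_{z_0}-i\mu)^{-1}\bigr\|_{\Lin(H)} \le \frac{1}{|\mu|}, \qquad \mu \in \R\setminus\{0\},
\]
and $(A_{z_0}-i\mu)^{-1}$ is compact for every such $\mu$ because, by hypothesis and the first resolvent identity, compactness of one resolvent implies compactness of every resolvent of $A_{z_0}$.

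Next, I would fix $\mu \in \R$ so large that $\|(A_{z_0}-i\mu)^{-1}\|_{\Lin(H)}\,\|C_z\|_{\Lin(H)} < 1$. Then $\Id + (A_{z_0}-i\mu)^{-1} C_z$ is invertible in $\Lin(H)$ by a Neumann series argument, and the factorization
\[
A_z - i\mu \;=\; A_{z_0}-i\mu + C_z \;=\; (A_{z_0}-i\mu)\bigl(\Id + (A_{z_0}-i\mu)^{-1}C_z\bigr)
\]
(valid on $\mathrm{Dom}(A_{z_0})=\mathrm{Dom}(A_z)$) shows that $A_z - i\mu$ has a bounded inverse
\[
(A_z - i\mu)^{-1} \;=\; \bigl(\Id + (A_{z_0}-i\mu)^{-1}C_z\bigr)^{-1}(A_{z_0}-i\mu)^{-1}.
\]
The right-hand side is the composition of a bounded operator with a compact one, hence compact, and in particular $i\mu$ belongs to the resolvent set of $A_z$.

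Finally, I would invoke the first resolvent identity
\[
(A_z - \zeta)^{-1} - (A_z - i\mu)^{-1} \;=\; (\zeta - i\mu)\,(A_z-\zeta)^{-1}(A_z - i\mu)^{-1}
\]
for any $\zeta$ in the resolvent set of $A_z$ to conclude that $(A_z-\zeta)^{-1}$ differs from $(A_z-i\mu)^{-1}$ by a compact operator and is therefore itself compact. I do not expect a serious obstacle here; the only care needed is to verify the domain identity and to justify the Neumann series step for large $|\mu|$, both of which are routine consequences of $C_z$ being bounded and $A_{z_0}$ being self-adjoint.
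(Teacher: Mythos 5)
Your argument is correct and follows essentially the same route as the paper: choose a purely imaginary spectral parameter of large modulus, use the self-adjoint resolvent bound to make the Neumann-series factor invertible, and conclude that the resolvent of $A_z$ at that point is the composition of a compact operator with a bounded one. The only (harmless) differences are that you place the Neumann factor on the other side of the factorization and that you spell out the propagation of compactness to the whole resolvent set via the first resolvent identity, which the paper leaves implicit.
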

\proof
This fact can be easily seen  as follows. In fact, 
\[
	(\lambda\Id - A_{z_0})- C_z= \big[(\lambda \Id -A_{z_0})-C_z(\lambda \Id- A_{z_0})^{-1}(\lambda \Id- A_{z_0})\big].
	\]
Since $A_{z_0}$ is self-adjoint, there exists $\lambda \in i\R$ such that $\Im(\lambda) > \norm{C_z}_{\Lin(H)}$. We observe that 
\[
\norm{C_z(\lambda\Id -A_{z_0})^{-1}}=\dfrac{\norm{C_z}}{\lambda}\norm{\left(\Id- \dfrac{A_{z_0}}{\lambda}\right)^{-1}}\leq\dfrac{\norm{C_z}}{\lambda}<1.
\]
Thus,  we get that
\begin{equation}\label{eq:compact-resolvent-stuff}
	(\lambda -A_{z_0}-C_z)^{-1}=\big[(\lambda \Id-A_{z_0})^{-1}(\Id-C_z(\lambda \Id -A_{z_0})^{-1})\big].
	\end{equation}
 Now, the conclusion follows by observing that the first (respectively second) operator at the right-hand side of Equation~\eqref{eq:compact-resolvent-stuff} is compact (respectively bounded). This concludes the proof. \qed
 \begin{rem}
As already observed a general bounded perturbation of a compact(trace class) resolvent operator has compact resolvent if and only if it's resolvent set is not empty!
\end{rem}
Now, since the spectrum of a compact resolvent operator  consists of isolated points of finite multiplicity, then  if $A_z$
has compact resolvent then  for each $z\in X$ there  exists $c>0$ such that 
\[
 \spec(A_{z}) \cap   S_c=\emptyset
\]
where $ S_c\=\{\lambda \in \C: |\lambda|= c\}$. 

We recall that $P_z \=-\dfrac{1}{2\pi i} \int_{\lambda=c}(A_z-\lambda I)^{-1}\dd\lambda $  is the projection to the total eigenspace corresponding to the eigenvalue of $A_z$ in the disk $|\lambda|<c$. We also have
$P_zA_z=A_zP_z$.
We set  $Q_{z}\= \Id - P_{z}$ and we define the following two operators
\begin{equation}\label{eq:M-N}
	N_{z}\=  Q_{z}+ P_{z}\, A_z\, P_z \quad \textrm{ and } \quad  M_z= P_z+ Q_z\,A_z\,  Q_z.  
	\end{equation}  
By a direct calculation we get that 
\begin{equation}\label{eq:nuova}
A_z=M_zN_z=Q_zA_zQ_z+ P_zA_zP_z.
\end{equation}
The following useful results holds.
\begin{cor}\label{thm:lemma-2}
		Under the previous assumptions, the  projection $P_z$ has finite dimensional range.
\end{cor}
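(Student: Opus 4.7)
The plan is to exploit the compact resolvent property of $A_z$ established in Lemma~\ref{thm:compact-resolvent-perturbation}, combined with the fact (already invoked in the paragraph preceding the statement) that the spectrum of an operator with compact resolvent is purely discrete. The core observation is that the only possible accumulation point of $\spec(A_z)$ is $\infty$, and therefore only finitely many eigenvalues can lie inside the open disk $|\lambda| < c$.

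First, I would fix $z \in X$ and pick some $\mu$ in the resolvent set of $A_z$; since $A_z$ is self-adjoint, any $\mu \in i\R \setminus \{0\}$ works. By Lemma~\ref{thm:compact-resolvent-perturbation}, $R \= (A_z - \mu I)^{-1} \in \comp(H)$. By the Riesz--Schauder theorem, the spectrum of $R$ consists of $0$ together with an at most countable sequence of nonzero eigenvalues of finite algebraic multiplicity, whose only possible accumulation point is $0$.

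Next, I would transfer this information back to $A_z$ via the spectral mapping relation
\[
\lambda \in \spec(A_z) \iff (\lambda - \mu)^{-1} \in \spec(R)\setminus\{0\},
\]
together with the fact that the multiplicity of $\lambda$ as an eigenvalue of $A_z$ equals the multiplicity of $(\lambda - \mu)^{-1}$ as an eigenvalue of $R$. Since eigenvalues of $R$ accumulate only at $0$, the eigenvalues $\lambda$ of $A_z$ have no finite accumulation point; in particular the set
\[
\Sigma_c \= \spec(A_z) \cap \{|\lambda| < c\}
\]
is finite, say $\Sigma_c = \{\lambda_1, \ldots, \lambda_N\}$, and each $\lambda_j$ has finite multiplicity.

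Finally, since $\spec(A_z) \cap S_c = \emptyset$, a standard contour-deformation argument together with the residue calculus gives the decomposition
\[
P_z \;=\; -\frac{1}{2\pi i}\int_{|\lambda|=c}(A_z - \lambda I)^{-1}\,\dd\lambda \;=\; \sum_{j=1}^N E_{z,j},
\]
where $E_{z,j}$ is the Riesz spectral projection associated to the isolated eigenvalue $\lambda_j$. Each $E_{z,j}$ projects onto the (finite-dimensional) generalized eigenspace of $\lambda_j$, so $\Imm(P_z) = \bigoplus_{j=1}^N \Imm(E_{z,j})$ is a finite sum of finite-dimensional subspaces, hence finite dimensional. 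The argument uses only standard spectral theory; the sole substantive point is the finiteness of $\Sigma_c$, which is immediate from the compact resolvent property.
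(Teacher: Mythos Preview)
Your argument is correct and follows essentially the same route as the paper: use the compact resolvent (Lemma~\ref{thm:compact-resolvent-perturbation}) together with the spectral mapping $\lambda \mapsto (\lambda-\mu)^{-1}$ to conclude that only finitely many eigenvalues, each of finite multiplicity, lie in the disk $|\lambda|<c$, whence $P_z$ has finite rank. One small slip: in the abstract setting of this section $A_z=A_{z_0}+C_z$ need not be self-adjoint (only $A_{z_0}$ is), so you should simply pick $\mu$ in the resolvent set of $A_z$---which is nonempty by Lemma~\ref{thm:compact-resolvent-perturbation}---rather than asserting $\mu\in i\R\setminus\{0\}$ works.
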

\proof
 Now, since $A_z$ has compact resolvent, it follows that its spectrum consists of finitely many points (counted with their own multiplicity). Moreover, by choosing  $\lambda\in \mathbb{C}$ such that $\lambda \Id -A_z$ is invertible it follows that $\mu \to (\lambda-\mu)^{-1} $ is a one to one map from $\spec(A_z)$ to $\spec ((\lambda\Id-A_z)^{-1})$. Thus, also the set $\{z \in \C||z|<c \textrm{ and } z\in \spec(A_z)\}$ consists of  finitely many eigenvalues of $A_z$  each of them having  finite algebraic multiplicity. So the rank of $P_z$ is finite. This concludes the proof.
	\qed 
\begin{lem}\label{thm:mancante}
	 Let $X \subset \C$ be an open subset,  $A_{z_0}$ be a operator such that $A_{z_0}^{-1}\in  \trace(H)$ and  we let $A_z\=A_{z_0}+C_z$ for $C_z\in \mathscr C^1\big(X,\Lin( H)\big)$. Assume that $\spec(A_{z_0})\cap S_c=\emptyset$ for some $c>0$. Then, there exists a (convex) $W_{z_0}$ neighborhood of $z_0$ such that   $A_z=M_zN_z$ for every $z \in W_{z_0}$ where  $M_z$ and $N_z$ are given in Equation \eqref{eq:M-N}  and $N_z\in \mathscr C^1 (W_{z_0},\Lin(H))$.
\end{lem}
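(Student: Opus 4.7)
My strategy is to construct $P_z$ as a Riesz contour integral around $|\lambda|=c$, propagate the $\mathscr C^1$ dependence of $C_z$ to both $P_z$ and $A_zP_z$, and then verify the factorization $A_z=M_zN_z$ by a direct algebraic calculation using the commutation relation $[P_z,A_z]=0$.

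First I would produce a \emph{convex} neighborhood $W_{z_0}$ of $z_0$ on which $\spec(A_z)\cap S_c=\emptyset$. Since $S_c$ is compact and disjoint from $\spec(A_{z_0})$, the quantity $R\=\sup_{\lambda\in S_c}\norm{(A_{z_0}-\lambda I)^{-1}}_{\mathscr L(H)}$ is finite, and the algebraic identity
\[
A_z-\lambda I=(A_{z_0}-\lambda I)\bigl(I+(A_{z_0}-\lambda I)^{-1}(C_z-C_{z_0})\bigr)
\]
holds on $\mathrm{dom}(A_{z_0})$. Since $z\mapsto C_z$ is of class $\mathscr C^1$, I may shrink to an open ball $W_{z_0}$ on which $R\,\norm{C_z-C_{z_0}}_{\mathscr L(H)}<1$ uniformly in $\lambda\in S_c$; a Neumann series then inverts the right-hand factor and yields the well-definedness of
\[
P_z\=-\frac{1}{2\pi i}\int_{|\lambda|=c}(A_z-\lambda I)^{-1}\,\dd\lambda
\]
throughout $W_{z_0}$, with $P_z$ of finite rank by Corollary~\ref{thm:lemma-2}.

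Second I would show $z\mapsto P_z\in\Lin(H)$ is of class $\mathscr C^1$. Differentiating the resolvent under the integral — justified by the uniform Neumann estimates above together with the $\mathscr C^1$ regularity of $C_\cdot$ — gives
\[
\dd P_z=\frac{1}{2\pi i}\int_{|\lambda|=c}(A_z-\lambda I)^{-1}\,\dd C_z\,(A_z-\lambda I)^{-1}\,\dd\lambda,
\]
which is continuous in $z$ by dominated convergence. Applying the identity $A_z(A_z-\lambda I)^{-1}=I+\lambda(A_z-\lambda I)^{-1}$ together with $\int_{|\lambda|=c}\dd\lambda=0$, one obtains the bounded representation
\[
A_zP_z=-\frac{1}{2\pi i}\int_{|\lambda|=c}\lambda\,(A_z-\lambda I)^{-1}\,\dd\lambda,
\]
which is again of class $\mathscr C^1$ in $z$ and lies in $\Lin(H)$. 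Since $P_z$ commutes with $A_z$, we have $A_zP_z=P_zA_zP_z$, so
\[
N_z=Q_z+P_zA_zP_z=(I-P_z)+A_zP_z
\]
is a $\mathscr C^1$ path in $\Lin(H)$, which is what we had to prove.

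Finally, the factorization reduces to the purely algebraic computation
\[
M_zN_z=(P_z+Q_zA_zQ_z)(Q_z+P_zA_zP_z)=P_zA_zP_z+Q_zA_zQ_z=A_z,
\]
where the cross-terms vanish because $P_zQ_z=Q_zP_z=0$ and $P_z^2=P_z$, $Q_z^2=Q_z$, while the last equality uses that $P_z$ and $Q_z$ both commute with $A_z$ so that $A_z=(P_z+Q_z)A_z(P_z+Q_z)=P_zA_zP_z+Q_zA_zQ_z$. The main obstacle I anticipate is the rigorous justification of the $\mathscr C^1$ dependence of the two contour integrals in the operator-norm topology — specifically, the interchange of differentiation with the integral uniformly in $\lambda\in S_c$. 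Once that is in place, everything else (boundedness of $N_z$ and the factorization) is essentially formal.
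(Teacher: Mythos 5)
Your proposal is correct and follows essentially the same route as the paper: well-posedness of the Riesz projection $P_z$ on a convex neighborhood via a Neumann-series perturbation of the resolvent along the compact circle $S_c$, $\mathscr C^1$ regularity of $P_z$ and of $P_zA_zP_z=-\frac{1}{2\pi i}\int_{|\lambda|=c}\lambda(A_z-\lambda I)^{-1}\dd\lambda$ from the contour-integral representations, and the purely algebraic verification of $A_z=M_zN_z$ (which the paper records separately as Equation~\eqref{eq:nuova}). The only cosmetic difference is that the paper obtains continuity of $z\mapsto P_z$ from the resolvent difference identity rather than by differentiating under the integral, which is an equivalent argument.
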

\proof
	Under the previous notations, for $z=z_0$, the projection defined by 
	\[
	P_z \=-\dfrac{1}{2\pi i} \int_{|\lambda|=c}(A_z-\lambda I)^{-1}\dd\lambda 
	\]
	 is well-defined and  it corresponds to the eigenprojection of $A_{z_0}$ for the part of the spectrum contained in the disk $|\lambda|<c$.  

Recall that if $T^{-1}$ is bounded invertible, then there is $\delta>0$ such that for each $\|C\|<\delta$, $T+C$ is bounded invertible and 
$\|(T+C)^{-1}-T^{-1}\|<\|C\|\|T^{-1}\|^2/(1-\|C\|\|T^{-1}\|)$.

	By the compactness of the circle $|\lambda|=c$, we get that there is $\delta >0$ such that $(A_{z_0}+C_z-\lambda I)^{-1}\in \Lin(H)$ for $|\lambda |=c$ with $\|C_z\|<\delta$. So there is a convex $W_{z_0}$ neighborhood of $z_0$ such that $P_z$ is well defined with $z\in W_{z_0}$.   Note that 
	\[
	P_z-P_{z'}=-\dfrac{1}{2\pi i}\int_{|\lambda|=c}(A_z-\lambda I)^{-1}(C_{z'}-C_z)(A_{z'}-\lambda I)^{-1}\dd \lambda .
	\]
	Since $C_z\in \mathscr C^1(X,\Lin(H))$ , we have $P_z\in \mathscr C^1(W_{z_0},\Lin(H))$.
	For $z\in W_{z_0}$, we also have $P_zA_zP_z=-\dfrac{1}{2\pi \mathrm{i}}\int_{|\lambda-z_0|=c}\lambda(A_z-\lambda\Id)^{-1}\dd\lambda$.  Using similarly method above, we have $P_zA_zP_z \in \mathscr C^1(W_{z_0},\Lin(H))$.
	Then we can conclude that $N_z=Q_z+P_zA_zP_z \in \mathscr (W_{z_0},\Lin(H))$.
	\qed
	\begin{prop}\label{thm:importante}
		Under the assumptions and notation of Lemma \ref{thm:mancante}, the following hold
		\begin{enumerate}
		\item[1.]  $M_z=M_{z_0}+D_z$with $z\mapsto D_z \in \mathscr C^1(W_{z_0}, \Lin (H))$ and $z\mapsto N_z\in \mathscr C^1(W_{z_0},\Lin (H))$;
		\item[2.] $\Tr\big[\dd M_z M_z^{-1}\big]$ is an exact  one-form on $U_{z_0}$.
		\end{enumerate}
		 Let $V_{z_0}\=\set{z\in W_{z_0}}{A_z \ \rm{ invertible}}$. Then 
		 \begin{enumerate}
					\item[3.]  $\Tr\big[ \dd A_z A_z^{-1}\big]=\Tr\big[ \dd M_zM_z^{-1}\big]+\Tr \big[\dd N_z N_z^{-1}\big]$ on $V_{z_0}$;
		\item[4.] $\Tr\big[\dd N_z  N_z^{-1}\big]$ is a closed but not exact one-form on $V_{z_0}$.
	\end{enumerate}
\end{prop}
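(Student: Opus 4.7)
For item~1, the key structural fact is that $P_z$ commutes with $A_z$ (as a spectral projection), giving $Q_zA_zQ_z=A_zQ_z=A_z-P_zA_zP_z$ and hence $M_z=P_z+A_z-P_zA_zP_z$; so
\begin{equation*}
D_z\=M_z-M_{z_0}=(P_z-P_{z_0})+C_z-\big(P_zA_zP_z-P_{z_0}A_{z_0}P_{z_0}\big),
\end{equation*}
and each summand belongs to $\mathscr{C}^1(W_{z_0},\Lin(H))$ by the hypothesis on $C_z$ and by Lemma~\ref{thm:mancante}; the claim for $N_z$ is part of that lemma.

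For item~2, since $A_{z_0}$ commutes with $P_{z_0}$ and $Q_{z_0}$, the operator $M_{z_0}=P_{z_0}+Q_{z_0}A_{z_0}Q_{z_0}$ is block-diagonal in $H=P_{z_0}H\oplus Q_{z_0}H$, equal to $\Id$ on the finite-dimensional first summand and to $A_{z_0}|_{Q_{z_0}H}$ on the second; its inverse $P_{z_0}+Q_{z_0}A_{z_0}^{-1}Q_{z_0}$ lies in $\trace(H)$ since $A_{z_0}^{-1}\in\trace(H)$. Combined with item~1, this puts $M_z=M_{z_0}+D_z$ in the hypotheses of the last clause of Lemma~\ref{lm:inv_exact}, and inspecting the first step of its proof furnishes, after shrinking $W_{z_0}$ to a neighborhood $U_{z_0}$ of $z_0$ on which $\|D_zM_{z_0}^{-1}\|_{\Lin(H)}<1/2$ and $M_z$ is invertible, the explicit primitive $\Tr[\dd M_z\,M_z^{-1}]=\dd\Tr\log(\Id+D_zM_{z_0}^{-1})$.

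For item~3, on $V_{z_0}$ both $M_z$ and $N_z$ are invertible ($M_z$ because its $Q_z$-block is $A_z$ on the spectrum outside $|\lambda|=c$, bounded away from $0$; $N_z$ because $A_z=M_zN_z$ is invertible). Differentiating $A_z=M_zN_z$ gives
\begin{equation*}
\dd A_z\,A_z^{-1}=\dd M_z\,M_z^{-1}+M_z\big(\dd N_z\,N_z^{-1}\big)M_z^{-1},
\end{equation*}
with each piece in $\trace(H)$ (here $A_z^{-1}\in\trace(H)$ via the resolvent identity $A_z^{-1}-A_{z_0}^{-1}=-A_z^{-1}C_zA_{z_0}^{-1}$, and similarly for $M_z^{-1}$). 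The cyclic property \eqref{eq:commutativity-trace} collapses the middle term to $\Tr[\dd N_z\,N_z^{-1}]$.

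For item~4, the structural observation $N_z-\Id=P_z(A_z-\Id)P_z$ shows that $N_z-\Id$ is finite rank, so $\dd N_z$ is also finite rank and $\dd N_z\,N_z^{-1}\in\trace(H)$. Using a Kato-type transformation $U_z\in\GL(H)$, of class $\mathscr{C}^1$ in $z$, with $U_zP_{z_0}U_z^{-1}=P_z$, the conjugate $U_z^{-1}N_zU_z$ becomes block-diagonal in the \emph{fixed} splitting $H=P_{z_0}H\oplus Q_{z_0}H$: the identity on $Q_{z_0}H$ and a $\mathscr{C}^1$ family $\tilde N_z\in\GL(n,\C)$ on $P_{z_0}H$, with $n=\rk P_{z_0}$. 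Cyclicity of $\Tr$ then identifies $\Tr[\dd N_z\,N_z^{-1}]$ with $\tr[\dd\tilde N_z\,\tilde N_z^{-1}]=\dd\log\det\tilde N_z$ via Lemma~\ref{thm:lemma-3}, yielding closedness; non-exactness follows from the multivaluedness of $\log\det$, since $\det\tilde N_z:V_{z_0}\to\C^*$ can have nonzero winding around $0$ along loops in $V_{z_0}$ on which an eigenvalue of $A_z$ crosses through zero. This last step is the main obstacle: the $z$-dependence of the splitting $H=P_zH\oplus Q_zH$ is what makes the finite-dimensional reduction delicate, and the Kato transformation is the device that renders the identification of the infinite-dimensional trace with the matrix logarithmic differential rigorous.
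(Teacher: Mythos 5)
Your items 1--3 follow essentially the paper's own argument: the same formula for $D_z$ obtained from $Q_zA_zQ_z=A_z-P_zA_zP_z$, the same appeal to Lemma~\ref{lm:inv_exact} for exactness of $\Tr[\dd M_zM_z^{-1}]$ (your explicit identification of $M_{z_0}^{-1}=P_{z_0}+Q_{z_0}A_{z_0}^{-1}Q_{z_0}\in\trace(H)$ is a welcome clarification of the paper's terser remark), and for item~3 only a cosmetic variation: you differentiate $A_z=M_zN_z$ directly and cycle the trace, whereas the paper differentiates the identity $M_z^{-1}=N_zA_z^{-1}$; the two computations are equivalent. Where you genuinely diverge is item~4. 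The paper's proof is a two-line deduction: $\Tr[\dd A_zA_z^{-1}]$ is closed on $V_{z_0}$ by Lemma~\ref{lm:inv_exact}, $\Tr[\dd M_zM_z^{-1}]$ is exact (hence closed) by item~2, so item~3 forces $\Tr[\dd N_zN_z^{-1}]$ to be closed. You instead reduce to a finite-dimensional determinant via a Kato transformation $U_z$ intertwining $P_{z_0}$ and $P_z$; this works (the correction terms $\pm\Tr[\dd U_zU_z^{-1}]$ cancel because $\dd U_zU_z^{-1}=[\dd P_z,P_z]$ is finite rank, so the cyclic property applies), and it has the merit of exhibiting $\Tr[\dd N_zN_z^{-1}]$ as $\dd\log\det$ of a matrix family --- which is essentially the content the paper defers to Proposition~\ref{lm:trace_det} and uses later in Proposition~\ref{thm:lemma-4} --- but it is considerably heavier than needed for closedness alone. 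On non-exactness, note that the paper's proof in fact only establishes closedness; your observation that $\det\tilde N_z$ \emph{can} wind around $0$ is the right reading of the (loosely stated) ``not exact'' clause, since the form is genuinely exact whenever no eigenvalue crosses zero.
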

\proof

	As in Lemma \ref{thm:mancante}, recall that 
	\begin{equation}\label{eq:projections}
	P_z=-\dfrac{1}{2\pi \mathrm{i}}\int_{|\lambda|=c}(A_z-\lambda\Id)^{-1}\dd\lambda \quad \textrm{ and }\quad
	P_zA_zP_z=-\dfrac{1}{2\pi \mathrm{i}}\int_{|\lambda|=c}\lambda(A_z-\lambda\Id)^{-1}\dd\lambda.
	\end{equation}
	By using Equation \eqref{eq:projections},  for $z$ in a sufficiently small neighborhood $W_{z_0}$ of $z_0$ it follows that the path $z\mapsto N_z=Q_z+P_zA_zP_z$ belongs to  $\mathscr{C}^1(W_{z_0}, \Lin(H))$.	By Equation \eqref{eq:nuova}, it follows that  $Q_zA_zQ_z=A_{z_0}+C_z-P_zA_zP_z$.  We set $D_z\=C_z-P_zA_zP_z+P_z-P_{z_0}+P_{z_0}A_{z_0}P_{z_0}$ and we observe that the path $z\mapsto D_z$ lies in$\mathscr{C}^1(W_{z_0},\Lin (H))$ and moreover $M_z=M_{z_0}+D_z$.

In order to prove the second claim, we observe that for all $z \in  U_{z_0}$ the operator  $M_z^{-1}$ is invertible  (since $0$ is not in the spectrum of $M_z$ as easily follows by the definition of $M$). Moreover  $\dd M_z=\dd D_z$ is locally uniformly bounded and $M_{z_0}^{-1}\in \trace(H)$ (being $M_{z_0}$ a bounded perturbation of a trace class resolvent operator  $A_0$ having not empty resolvent set).
	By invoking Lemma \ref{lm:inv_exact}, it follows that $\Tr[\dd M_zM_z^{-1}]$ is an  exact one-form on $U_{z_0}$.

To prove the third claim, we observe that, since $M_z^{-1}=N_zA_z^{-1}$, then  we have
	\begin{multline}\label{eq:1111}
	-M_z^{-1} \dd M_z M_z^{-1}=\dd(M_z^{-1})=\dd N_z(A_z^{-1})-N_z A_z^{-1}\dd A_z A_z^{-1}\\ =\dd N_z(N_z^{-1}M_z^{-1})-N_zA_z^{-1}\dd A_zN_z^{-1}M_z^{-1}.
	\end{multline}
	Since $\mathrm{dom} M_z=\mathrm{dom} A_z$ is dense in $H$, we get by the first and last members of Equation \eqref{eq:1111}, that
		\[
	M_z^{-1} \dd M_z =-\dd N_z N_z^{-1}+N_zA_z^{-1}\dd A_zN_z^{-1}
	\]
	and by using once again the commutativity property of the trace,  we also get that 
	\[
	\Tr \dd A_z A_z^{-1}=\Tr \dd M_zM_z^{-1}+\Tr \dd N_zN_z^{-1}.
	\]
In order to prove the fourth claim, we observe that  by Lemma \ref{lm:inv_exact}, the one-form $\Tr(\dd A_z A_z^{-1})$ is closed on $V_{z_0}$. Then by the claim 3. we get that 
 	$\Tr[\dd N_zN_z^{-1}]$ is also closed. This concludes the proof.	
\qed

\begin{prop} \label{lm:trace_det}
Given $\varepsilon>0$,	let  $A\in \mathscr C^1\big((-\varepsilon, \varepsilon), \GL(W,H) \big)$ be a path of invertible operators on a Hilbert space $H$ having  the same domain  $W$  and let  $P\in \mathscr C^1\big((-\varepsilon, \varepsilon), \Lin(H) \big)$ be a path of finite rank projections. We assume that 
\begin{itemize}
\item $ P_tA_t=A_tP_t$ for all $t \in (-\varepsilon, \varepsilon)$
\item $t \mapsto P_tA_tP_t \in \mathscr C^1((-\epsilon,\epsilon),\Lin(H))$
 \end{itemize}
	 Then we get that  $E_t\=P_tA_tP_t|_{\Imm P_t}$ is a linear map from $\Imm P_t$ to $\Imm P_t$ and  by setting   $N_t\=(\Id-P_t)+P_tA_tP_t$, the following equality holds:
		\[
		\Tr[\dd N_t N_t^{-1}]= \Tr[\dd (P_tA_tP_t)P_tA_t^{-1}P_t]=\dd \log\det E_t.
		\]	
\end{prop}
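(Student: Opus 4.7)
The plan is to first invert $N_t$ explicitly, then reduce $\Tr[\dd N_t N_t^{-1}]$ to $\Tr[\dd(P_tA_tP_t)\,P_tA_t^{-1}P_t]$ by showing that three cross terms in the natural expansion vanish, and finally identify the resulting trace with $\dd\log\det E_t$ via the Fredholm determinant of $N_t$.

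\textbf{Inverting $N_t$.} The commutation $A_tP_t=P_tA_t$ implies that both $A_t$ and $A_t^{-1}$ preserve the pointwise splitting $H=\Imm P_t\oplus\ker P_t$. In particular $E_t=A_t|_{\Imm P_t}$ is a linear isomorphism of the finite-dimensional subspace $\Imm P_t$, and on $\Imm P_t$ one has $P_tA_tP_t=A_t=E_t$ while on $\ker P_t$ one has $P_tA_tP_t=0$. A direct computation then yields
\[
N_t^{-1} = (I-P_t) + P_tA_t^{-1}P_t.
\]

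\textbf{Reduction.} Setting $B_t:=P_tA_tP_t$ and $C_t:=P_tA_t^{-1}P_t$, I expand
\[
\dd N_t\,N_t^{-1} = -\dd P_t\,(I-P_t) - \dd P_t\,C_t + \dd B_t\,(I-P_t) + \dd B_t\,C_t
\]
and verify that the first three summands have vanishing trace. Since $P_t$ is a $\mathscr{C}^1$ path of finite rank projections on a connected interval, its rank is constant; hence $\dd P_t$, $B_t$, $C_t$, $\dd B_t$ are all finite rank and cyclicity of the trace applies freely. Differentiating $P_t^2=P_t$ yields $\dd P_t = P_t\dd P_t + \dd P_tP_t$ and in particular $P_t\dd P_tP_t = 0$. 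In block form relative to the pointwise splitting $H=\Imm P_t\oplus\ker P_t$ this says that $\dd P_t$ is strictly off-diagonal, while $I-P_t$, $B_t$, $C_t$ are block-diagonal (with $C_t$ and $B_t$ concentrated on $\Imm P_t$ and $I-P_t$ on $\ker P_t$). The three cross products are thus strictly off-diagonal finite-rank operators and have vanishing trace, so
\[
\Tr[\dd N_t\,N_t^{-1}] = \Tr[\dd B_t\,C_t] = \Tr\bigl[\dd(P_tA_tP_t)\,P_tA_t^{-1}P_t\bigr].
\]

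\textbf{Identification with $\dd\log\det E_t$.} Since $N_t - I = P_t(A_t - I)P_t$ has finite rank and is therefore trace class, the Fredholm determinant $\det N_t$ is well defined. Because $N_t$ acts as the identity on $\ker P_t$ and as $E_t$ on $\Imm P_t$, this Fredholm determinant reduces to the ordinary finite-dimensional determinant $\det N_t = \det E_t$. Combining the standard logarithmic derivative formula $\dd\log\det(I+K) = \Tr[(I+K)^{-1}\dd K]$ for Fredholm determinants along $\mathscr{C}^1$ trace-class paths with the reduction above, I obtain
\[
\dd\log\det E_t = \dd\log\det N_t = \Tr[\dd N_t\,N_t^{-1}] = \Tr\bigl[\dd(P_tA_tP_t)\,P_tA_t^{-1}P_t\bigr],
\]
which completes the proof.

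The principal technical point is the vanishing of the three cross terms: although the blockwise picture is transparent at each fixed $t$, the blocks themselves depend on $t$ and the appearance of $\dd P_t$ mixes them. The identity $P_t\dd P_tP_t=0$ together with cyclicity of the trace on finite-rank operators is exactly what is needed. Alternatively, one could bypass the Fredholm determinant by writing $\Tr[\dd B_t\,C_t]$ as a matrix trace on $\Imm P_t$ and invoking Jacobi's formula (Lemma~\ref{thm:lemma-3}) applied to the finite-dimensional matrix of $E_t$, but the Fredholm-determinant route is conceptually simpler.
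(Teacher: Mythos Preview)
Your proof is correct. The reduction step—expanding $\dd N_t\,N_t^{-1}$ and killing the cross terms—is essentially what the paper does as well, though you organize it via the block picture while the paper computes each term algebraically. One small point: your sentence ``the three cross products are thus strictly off-diagonal'' is not fully justified as written for the term $\dd B_t\,(I-P_t)$, since you have only recorded that $B_t$ is block-diagonal, not $\dd B_t$. The missing observation is that $B_tP_t=B_t$ gives $\dd B_t\,(I-P_t)=B_t\,\dd P_t$, which is (block-diagonal)$\times$(off-diagonal) and hence off-diagonal; the paper makes exactly this computation explicit. This is a presentational gap, not a mathematical one.

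Where your argument genuinely diverges from the paper is in the identification with $\dd\log\det E_t$. The paper works at a single instant $t=0$, fixes a basis $e_1,\dots,e_n$ of $\Imm P_0$, transports it via $P_t$ to a moving basis of $\Imm P_t$, and then compresses everything to the fixed space $\Imm P_0$ via the auxiliary maps $R_t=P_0P_tP_0$ and $S_t=P_0P_tA_tP_tP_0$. Jacobi's formula (Lemma~\ref{thm:lemma-3}) is applied to these finite matrices, and an additional computation shows $\dd\log\det R_t|_{t=0}=\tfrac12\Tr\dd P_t|_{t=0}=0$, isolating $\dd\log\det E_t$. Your route through the Fredholm determinant of $N_t$ is shorter and more conceptual: since $N_t-I$ is finite rank with rank bounded by $\rk P_t$ (constant in $t$), the path is $\mathscr C^1$ in trace norm, $\det N_t=\det E_t$ by inspection of the block structure, and the standard logarithmic-derivative identity closes the argument in one line. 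The trade-off is that you invoke the Fredholm determinant and its differentiation formula as black boxes, whereas the paper stays entirely within the finite-dimensional Jacobi formula already established in Lemma~\ref{thm:lemma-3}.
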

\proof
	Without loss of generality, we only need to prove the equality at $t=0$; thus  we  assume that $\dim \Imm P_0=n$ and let $(e_i)_{i=1}^n$ be a basis of $\Imm P_0$. 	Then for $|t|$ sufficiently small, we get  $\{P_te_1,P_t e_2,\cdots,P_t e_n\}$ is a basis of $\Imm P_t$.
	Let
	$P_tA_tP_t{e_i}=\sum_{1\le j\le n} \alpha_{ij}(t)P_t(e_j)$.
	Then we have $E_t(P_t e_j)=\sum_{1\le j\le n} \alpha_{ij}(t)P_te_j$. Furthermore
	\[
	P_0P_tA_tP_tP_0 e_i=\sum_{1\le j\le n}\alpha_{ij}(t)P_0P_tP_0 e_j.
	\]
	Let now $R_t\=P_0P_tP_0$ be the map from $\Imm P_0$ to $\Imm P_0$ and we let $R_t e_i=\sum_{1\le j\le n} \beta_{ij}(t) e_j$. We denote by $S_t$ the map from $\Imm P_0$ to $\Imm P_0$ defined by 
	 $S_t\=P_0P_tA_tP_tP_0$. 	Then we have
	\begin{equation}\label{eq:S}
	S_t e_i=\sum_{1\le j,k\le n}\alpha_{ij}(t)\beta_{jk}(t)e_k.
	\end{equation}
	By Lemma \ref{thm:lemma-3} and by Equation \eqref{eq:S}, we get 
	\[
	\Tr[\dd S_t S_0^{-1}]\big\vert_{t=0}=\dd\log\det S_t\big\vert_{t=0}=\dd\log\det E_t \big\vert_{t=0}+\dd\log\det R_t \big\vert_{t=0}.
	\]
	We note that $\dd S_t|_{t=0}S_0^{-1}=P_0\dd(P_tA_tP_t)|_{t=0}P_0(P_0A_0^{-1}P_0)$ and hence 
	\[
	\Tr[\dd S_tS_0^{-1}]\big\vert_{t=0}=\Tr\big[\dd(P_tA_tP_t)\big\vert_{t=0}P_0A_0^{-1}P_0\big].
	\]
	By invoking once again Lemma \ref{thm:lemma-3}, we also have
	\[
	\dd\log\det R_t \big\vert_{t=0}=\Tr[\dd(P_0P_tP_0)\big\vert_{t=0}P_0]=\Tr[P_0\dd P_t\big\vert_{t=0}]=\dfrac12\Tr \dd P_t^2\big\vert_{t=0}=\dfrac12\Tr \dd P_t\big\vert_{t=0}.
	\]
We note that $\Tr(P_t\dd P_t)=\Tr(P_t\dd P_t^2)=\Tr(P_t P_t\dd  P_t+P_t\dd P_tP_t)=2\Tr(P_t\dd P_t)$.
	Then we have 
	\[\Tr(\dd  P_t)=2\Tr(P_t\dd P_t)=0.
	\]
		Then we can conclude that 
	\[
	\Tr[\dd(P_tA_tP_t)\big\vert_{t=0}P_0A_0^{-1}P_0]=\dd \log \det E_t\big\vert_{t=0}\]
	By arguing as above, for each $t\in (-\epsilon,\epsilon )$, we get 
\begin{equation}\label{eq:final1}
	\Tr[\dd(P_tA_tP_t)P_tA_t^{-1}P_t]=\dd \log \det E_t.
\end{equation}
Let us consider the projection $Q_t=\Id-P_t$ and  we observe that
	\[
		\Tr[\dd N_tN_t^{-1}]=\Tr\big[\big(\dd Q_t+\dd(P_tA_tP_t)\big)(Q_t+P_tA_t^{-1}P_t)\big].
		\]
	We observe that
	\begin{equation}
	\dd(P_tA_tP_t)=\dd(P_tA_tP^2_t)= d(P_tA_tP_t)P_t+P_tA_tP_t dP_t
	\end{equation}
	and by the first and third member in the previous equation, we get that $\dd(P_tA_tP_t)Q_t= P_tA_tP_t dP_t$. By this, it follows that $\Tr(\dd(P_tA_tP_t)Q_t)=0$. 	
	
	Similarly we have 
	\[
	\Tr[\dd Q_t (P_tA_t^{-1}P_t)]=\Tr[\dd Q_t^2 (P_tA_t^{-1}P_t)]=\Tr[Q_t\dd Q_t P_t A_t^{-1}P_t+\dd Q_t Q_t(P_tA_t^{-1}P_t)]=0.
	\]
	Then we can conclude that $\Tr(\dd N_t N_t^{-1})=\Tr[\dd(P_tA_tP_t)P_tA_t^{-1}P_t]$. Summing up this last equation and Equation~\eqref{eq:final1}, we get the desired conclusion.	
\qed

For, let us consider the family of operators parametrized by the rectangle $\Omega\=[0,1]\times[-1,1]\subset \C$
\begin{equation}
		A_z\= A_t +is\, \Id\quad  \textrm{ where} \quad z\=t+is \in \Omega
		\end{equation}
		where $A_t$ is self-adjoint for every $t$. Arguing preciely as in Proposition \ref{lm:trace_det}, we assume that there is a path of finite rank projections $ P\in \mathscr C^1 ([0,1],\Lin(H))$  such that $P_tA_t=A_tP_t$ and $ t\mapsto P_tA_tP_t \in \mathscr C^1([0,1],\Lin(H))$. Let $S_z\=[P_tA_tP_t+is \Id]|_{\Imm P_t}$ and we recall that 
	there exist $n$ continuous functions that represent the repeated eigenvalues of $P_tA_tP_t|_{\Imm P_t}$ and,  up to relabel, we can assume that 
	\[
	\lambda_1(t) \leq \ldots \leq\lambda_n(t).
	\]
Thus, we have $\displaystyle\det S_z= \prod_{i=1}^n\big[\lambda_i(t)+ is\big]$. 
\begin{prop}\label{thm:lemma-4}
Under the above notation and assume that $A_0,A_1$ is bounded invertible  , $A_0^{-1}\in S_1(H)$, and $A_t=A_0+C_t$ with $C_t\in \mathscr C^1([0,1],\Lin_s(H))$.
We assume that there is c>0 such that $\pm c \notin \spec(A_t) $, for each $t\in [0,1]$. 
	Then	the decomposition $A_t=M_tN_t$ is well defined on $[0,1]$ and we have 
		\begin{equation}
		\dfrac{1}{2\pi i} \int_{\partial \Omega}\Tr \dd A_z A_z^{-1} = l-m
		\end{equation}
		where $l$ (respectively $m$) is  the number of $\lambda_i(t)$ (counted with multiplicity) crossing the real axis from negative to positive (respectively from positive to negative). 
\end{prop}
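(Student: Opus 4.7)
The overall strategy is to extend the factorization $A_t=M_tN_t$ of Proposition~\ref{thm:importante} to the complex rectangle $\Omega$, use the induced splitting of $\Tr(\dd A_zA_z^{-1})$ on $\partial\Omega$, show that the $M$-contribution integrates to zero by closedness plus simple connectivity, and identify the $N$-contribution with $\dd\log\det S_z$ whose period is a winding number that one then evaluates as $l-m$.

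For the extension, I would set, with $Q_t\=\Id-P_t$ and $P_t$ the spectral projection of $A_t$ to $\{|\lambda|<c\}$,
\[
M_z\= P_t+Q_tA_zQ_t=M_t+isQ_t,\qquad N_z\= Q_t+P_tA_zP_t=N_t+isP_t.
\]
Since $[P_t,A_z]=0$ because $A_z=A_t+is\Id$, a block computation gives $M_zN_z=A_z$ on all of $\Omega$. The operator $M_z$ is invertible everywhere on $\Omega$: on $\Imm P_t$ it is the identity, while on $\Imm Q_t$ it acts as $Q_tA_tQ_t+is\Id$, whose spectrum is $(\spec A_t\setminus(-c,c))+is\subset\C\setminus\{0\}$ for every $s\in[-1,1]$; likewise $N_z$ is invertible precisely off the zero set of $\det S_z$. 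Running the proof of Proposition~\ref{thm:importante} item~3 with $\dd$ now the full exterior differential in $(t,s)$ produces the pointwise identity
\[
\Tr(\dd A_zA_z^{-1})=\Tr(\dd M_zM_z^{-1})+\Tr(\dd N_zN_z^{-1}).
\]

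For the $M$-part I would apply Lemma~\ref{lm:inv_exact}: at any base point $z_0\in\Omega$ one has $M_{z_0}^{-1}=P_{t_0}+A_{z_0}^{-1}Q_{t_0}$ (since $Q_{t_0}$ commutes with $A_{z_0}$), and $A_{z_0}^{-1}\in\trace(H)$ is inherited from $A_0^{-1}\in\trace(H)$ by the standard resolvent identity applied to the bounded perturbation $C_{t_0}+is_0\Id$, so $M_{z_0}^{-1}\in\trace(H)$; moreover $M_z-M_{z_0}$ is $\mathscr{C}^1$ with values in $\Lin(H)$. The lemma then makes $\Tr(\dd M_zM_z^{-1})$ a closed $1$-form on the simply connected set $\Omega$, hence exact, and so $\int_{\partial\Omega}\Tr(\dd M_zM_z^{-1})=0$. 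For the $N$-part I would repeat the argument of Proposition~\ref{lm:trace_det} with scalar derivatives replaced by $1$-form coefficients in $(t,s)$; the finite-rank reduction to $\Imm P_t$ still yields
\[
\Tr(\dd N_zN_z^{-1})=\dd\log\det S_z,
\]
so that $\frac{1}{2\pi i}\int_{\partial\Omega}\Tr(\dd A_zA_z^{-1})$ equals the winding number $w$ of $z\mapsto\det S_z=\prod_{i=1}^n(\lambda_i(t)+is)$ about the origin as $z$ traverses $\partial\Omega$ counterclockwise.

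To compute $w$ I would use additivity under products to write $w=\sum_i w_i$, with $w_i$ the winding of $\gamma_i(z)\=\lambda_i(t)+is$. The image $\gamma_i(\partial\Omega)$ consists of two horizontal arcs at $\Imm=\pm1$ and two vertical segments at $\Real=\lambda_i(0)$, $\Real=\lambda_i(1)$, and never meets $0$ (the horizontal arcs because $\Imm=\pm1\neq0$, the vertical segments because $\lambda_i(0),\lambda_i(1)\neq0$ since $A_0,A_1$ are invertible). A straight-line homotopy in $\C\setminus\{0\}$ identifies $\gamma_i(\partial\Omega)$ with the boundary of the rectangle of vertices $\lambda_i(0)\pm i,\ \lambda_i(1)\pm i$, counterclockwise; this rectangle encloses $0$ precisely when $\lambda_i(0)$ and $\lambda_i(1)$ have opposite signs. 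Therefore $w_i=+1$ if $\lambda_i(0)<0<\lambda_i(1)$, $w_i=-1$ if $\lambda_i(1)<0<\lambda_i(0)$, and $w_i=0$ otherwise; summing gives $w=l-m$. The main delicate point is the passage of Proposition~\ref{lm:trace_det} to the $2$-parameter setting, but this reduces to checking the identity separately for $\partial_t$ (the original $1$-parameter statement) and for $\partial_s$ (which is immediate from $\partial_sN_z=iP_t$ and the finite-dimensional block structure on $\Imm P_t$).
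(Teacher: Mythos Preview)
Your proof is correct and follows essentially the same route as the paper: extend the $M_zN_z$ decomposition to all of $\Omega$, drop the $M$-contribution by closedness on the simply connected rectangle, reduce the $N$-contribution to $\dd\log\det S_z$ via Proposition~\ref{lm:trace_det}, and evaluate the winding number of each factor $\lambda_i(t)+is$. The paper's only variation is in this last step, where it homotopes each factor to one of four explicit model functions rather than invoking the rectangle geometry; one small cleanup in your argument is that the formula $M_{z_0}^{-1}=P_{t_0}+A_{z_0}^{-1}Q_{t_0}$ needs $A_{z_0}$ invertible, so just fix the base point at $z_0=0$ (or any $z_0$ with $s_0\neq0$), which is all Lemma~\ref{lm:inv_exact} requires.
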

\begin{proof}
	The proof of this result is quite straightforward. 
First, we observe that  on $\Omega$  the following decomposition holds 
	\[
	A_z=M_zN_z, \textrm{ where } M_z=P_t+Q_t(A_t+is\Id)Q_t, \textrm{ and } N_z=Q_t+P_t(A_t+is \Id)P_t.
	\]
		By invoking Proposition~\ref{thm:importante}, we get 
	\begin{equation}
	\dfrac{1}{2\pi i} \int_{\partial \Omega} \Tr \dd A_z A_z^{-1} = \dfrac{1}{2\pi i} \int_{\partial \Omega} \Tr \dd N_z N_z^{-1}+ \dfrac{1}{2\pi i} \int_{\partial \Omega} \Tr \dd M_z M_z^{-1}\\= \dfrac{1}{2\pi i} \int_{\partial \Omega} \Tr \dd N_z N_z^{-1}.
	\end{equation}
	By invoking Proposition \ref{lm:trace_det}, we get 
	\[
	\dfrac{1}{2\pi i} \int_{\partial \Omega} \Tr \dd N_z N_z^{-1} =\dfrac{1}{2\pi i} \int_{\partial \Omega} \dd \log\det [P_tA_tP_t+is \Id]|_{\Imm P_t} .
	\]
	Let $S_z\=[P_tA_tP_t+is \Id]|_{\Imm P_t}$.
	There exist $n$ continuous functions that represent the repeated eigenvalues of $P_tA_tP_t$ and,  up to relabel, we can assume that 
	\[
	\lambda_1(t) \leq \ldots \leq\lambda_n(t).
	\]
Thus, we have $\displaystyle\det S_z= \prod_{i=1}^n\big[\lambda_i(t)+ is\big]$.  So, let us consider the eigenvalue $\lambda_i$ and we observe that only four cases can occur (according to the sign of $\lambda_i$ at the ends); more precisely 
\begin{equation}\label{eq:cases}
(I)\ \begin{cases}
\lambda_i(0)<0\\
\lambda_i(1)>0
\end{cases},\quad 
(II)\ \begin{cases}
\lambda_i(0)>0\\
\lambda_i(1)<0
\end{cases},\quad 
(III)\ \begin{cases}
\lambda_i(0)<0\\
\lambda_i(1)<0
\end{cases},\quad
(IV)\ \begin{cases}
\lambda_i(0)>0\\
\lambda_i(1)>0
\end{cases}.
\end{equation}
For each one of the cases appearing in Equation \eqref{eq:cases}, we construct the following four homotopies  
\begin{multline}
h_{(I)}(s,t)\=(1-s)\lambda_i(t) + s\left(t-\dfrac12\right), \quad h_{(II)}(s,t)\=(1+s)\lambda_i(t) + s\left(-t+\dfrac12\right)\\
 h_{(III)}(s,t)\=(1-s)\lambda_i(t) -s\quad h_{(IV)}(s,t)\=(1-s)\lambda_i(t) + s, \quad.
\end{multline}
Thus, there exist $h,k,l,m \in \N$ such that $\det S_z$ is a homotopic (through an admissible homotopy) to the following function
\[
\phi(t,s)\=(1+is)^k (-1+is)^h \left(t-\dfrac12+ is\right)^l\left(-t+\dfrac12+is\right)^m
\]
where $n=h+k+l+m$. Moreover the functions $s \mapsto 1+ is$ and  $s \mapsto -1+ is$ are homotopic to  the constant functions $1$ and $-1$ respectively through the homotopy $s \mapsto 1+ i\lambda s$ and $s \mapsto - 1+ i\lambda s$, for $\lambda \in [0,1]$ respectively. Thus, we have
\begin{multline}
\deg(\det S_z, R, 0)\\=
\dfrac{1}{2\pi i} \int_{\partial R} \dd \log\det S_z= \dfrac{1}{2\pi i}\int_{\partial R} \dd \log\left[\left(t-\dfrac12+ is\right)^l\left(-t+\dfrac12+is\right)^m\right]\\= \dfrac{1}{2\pi i}\int_{\partial R}  \dd \log \left(t-\dfrac12+ is\right)^l+ \dfrac{1}{2\pi i}\int_{\partial R}  \dd \log\left(-t+\dfrac12+is\right)^m\\= l-m.
\end{multline}
This concludes the proof. 
\end{proof}
\begin{rem}
The proof given in Proposition~\ref{thm:lemma-4} mainly relies on the homotopy invariance property of the winding number and it is completely differen of a similar result proved by authors in  \cite[Proposition 5.1]{MPP05} by using  Kato's Selection Theorem. 
\end{rem}


\section{Degree index and trace formulas}\label{sec:HPW-index}

This section is devoted to introduce  a new invariant  defined by means of a suspension of the complexified family of Morse-Sturm boundary value problems. The resulting boundary value problem is parameterized by points of the complex plane. This topological invariant that will be defined in terms of the  Brouwer degree of an associated determinant map.

We consider a linear second order differential operator 
\begin{equation}
	\mathscr A_0\=-\dfrac{d}{dx}\left[P(x) \dfrac{d}{dx}+ Q(x)\right]+ \trasp{Q}(x)\dfrac{d}{dx} +G(x) \qquad x \in [0,1]
	\end{equation}
with matrix coefficients $P, G \in \mathscr C^1\big([0,1],\SSS(N)\big)$, $ Q \in \mathscr C^1\big([0,1], \Mat(N, \R)\big)$ and we  assume that  $P(x)$ is non degenerate for each $x\in [0,1]$.  

Now, for every  $t \in [0,1]$, we let $C_t \in \mathscr C^1\big(I,  \SSS(N)\big)$ and we assume that $t\mapsto C_t(x)$ is continuous and $C_0(x)=0$. 

We set $C_1(x)=C(x)$ and  let us now define the second order  differential operator 
\begin{equation}
\mathscr A_t\= \mathscr A_0 + \mathscr C_t
\end{equation}
where $\mathscr C_t$ denotes the operator pointwise defined by $C_t$ as follows $(\mathscr C_t u)(x)\= C_t u(x)$ for every $x \in [0,1]$. Without further conditions,  the operators  $\mathscr A_0$ and hence $\mathscr A_t$ acts, for each $t \in [0,1]$ on the space $\mathscr C^1([0,1], \R^N)$.
\begin{defn}\label{def:bc} 
For $i=0,1$ we set  $ R_i\in \Mat(2N,\R)$ and we define the boundary operator $\mathcal R$ as 
\begin{equation}\label{eq:bc}
 \mathcal R(u):=  R_0 \begin{bmatrix}
                      P(0) u'(0)+ Q(0)\\
                      u(0)
                     \end{bmatrix}
+ R_1 \begin{bmatrix}
                     P(1) u'(1)+ Q(1)\\
                      u(1)
                     \end{bmatrix}
\end{equation}
where $'$ denotes the derivative with respect to $x$.
\end{defn}
\begin{ex} We observe that the boundary conditions given in Definition \ref{def:bc} are very general. It is worth noticing that:
\begin{itemize}
 \item  Dirichlet case corresponds to choose 
\[
 R_0:=\begin{bmatrix}
      0 & I_n\\
      0 & 0 
     \end{bmatrix}, \qquad  R_1:=\begin{bmatrix}
      0 & 0\\
      0 & I_n 
     \end{bmatrix};
   \]
\item Neumann  case corresponds to choose
\[
R_0:=\begin{bmatrix}
      I_n & 0\\
      0 & 0 
     \end{bmatrix}, \qquad  R_1:=\begin{bmatrix}
      0 & 0\\
      I_n & 0 
     \end{bmatrix};
   \] 
\item Periodic boundary conditions corresponds to choose
\[
  R_0= \begin{bmatrix}
                \Id & 0 \\
                0 & \Id
               \end{bmatrix}, \qquad  R_1= - R_0.
\]
\end{itemize}
\end{ex}

For each $t \in I$, we denote by $\mathcal A_0$ and $ \mathcal A_t$, the operator $\mathscr A_0$ and  $\mathscr A_t$ respectively  acting on the domain $\displaystyle  \mathcal D\=\Set{u \in H^2(I, \R^N)| \mathcal R u=0}$ and we consider the complexified (extension of the) operators $\mathcal A_0$ and $\mathcal A_t$ by considering both operators acting on the $\mathcal C^\infty(I, \C^N)$. With  a slight abuse of notation we will not distinguish between $\mathcal D$ and its complexification 
\begin{equation}
\mathcal D\=\Set{u \in H^2(I, \C^N)| \mathcal R u=0}.
\end{equation}
as well as between $\mathcal A_0$ and $\mathcal A_t$ and their   complex extensions.
We let 
\begin{equation}
C_z(x)\=C_t(x) + is\, \Id \quad \textrm{ for } \quad z=t+is \in \Omega
\end{equation}   
and for every $z \in  \Omega$ we define  $\mathcal A_z : \mathcal D \subset L^2([0,1], \C^N) \to L^2([0,1], \C^N)$ to be the closed unbounded operator on $\mathcal D$ pointwise defined by 
\begin{equation}\label{eq:Sturm-Liouville-equation}
\big( \mathcal A_z u\big)(x)\= \big(\mathcal A_0 u\big)(x) + \mathcal  C_z(x).
\end{equation}
We consider the Morse-Sturm equation 
\begin{equation}
	-\dfrac{d}{dx}\big[P(x) u'(x) + Q(x) u(x)\big]+ \trasp{Q}(x) u'(x)+G(x) u(x)+C_z(x) u(x)=0, \qquad x \in [0,1].
	\end{equation}
By setting   $v(x)\= P(x)u'(x)+ Q(x) u(x)$ and  $w(x)\equiv \trasp{\big(v(x), u(x)\big)}$,  Equation~\eqref{eq:Sturm-Liouville-equation} fits into the following linear Hamiltonian system 
\begin{equation}\label{eq:Hamiltonian-system}
w'(x)= J B_z(x) w(x), \qquad x \in [0,1]
\end{equation}
where $\displaystyle J\= \begin{bmatrix}
 0 & -\Id\\ \Id & 0
 \end{bmatrix}$
and finally 
\begin{equation}
B_z(x)\= \begin{bmatrix}
 P^{-1}(x) & - P^{-1}(x) Q(x)\\ - \trasp{Q}(x) P^{-1}(x) &  \trasp{Q}(x) P^{-1}(x) Q(x) - G(x)-C_z(x)
 \end{bmatrix}.
\end{equation}
Let $\psi_z:I\to \Sp(2N)$ be the fundamental solution of the Hamiltonian system given in Equation~\eqref{eq:Hamiltonian-system} and we let 
\begin{equation}
R_z\= R_0 +R_1 \psi_z(1), \qquad z \in \Omega.
\end{equation}

\begin{lem}\label{lem:well-posedness-rho}
The following statements are equivalent
\begin{enumerate}
\item $\ker \mathcal A_z \neq \{0\}$
\item $\det R_z =0$
\end{enumerate}
Setting  $\mathcal Z\=\Set{z \in \Omega|\det R_z =0}$ we have that 
\[
\mathcal Z \subset \R
\]
\end{lem}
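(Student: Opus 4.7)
My plan is to reduce the spectral statement to a finite-dimensional linear algebra problem via the fundamental solution $\psi_z$, and then to exploit self-adjointness of the real part $\mathcal{A}_t$ to force the zero set of $\det R_z$ to lie on the real axis.

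\textbf{Equivalence of (1) and (2).} The first step is to translate $\ker \mathcal{A}_z$ into initial-value data. If $u \in \mathcal{D}$ solves $\mathcal{A}_z u = 0$, then setting $v \= Pu' + Qu$ and $w \= \trasp{(v, u)}$ yields a solution of the Hamiltonian system $w' = J B_z(x) w$, so $w(x) = \psi_z(x) w(0)$ for every $x \in [0,1]$. Conversely, any $w(0) \in \C^{2N}$ produces a solution $u$ of $\mathcal{A}_z u = 0$ on the full ambient space, and the map $u \mapsto w(0)$ is a bijection between solutions of the equation and $\C^{2N}$. The boundary condition $\mathcal{R} u = 0$ reads $R_0 w(0) + R_1 w(1) = 0$, which by the previous identity becomes $(R_0 + R_1 \psi_z(1)) w(0) = R_z w(0) = 0$. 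Hence $\ker \mathcal{A}_z$ is linearly isomorphic to $\ker R_z$, and nontriviality of either is equivalent to $\det R_z = 0$.

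\textbf{Localization of the zero set.} For the inclusion $\mathcal{Z} \subset \R$, write $z = t + is$ with $s \neq 0$ and use that $\mathcal{A}_z = \mathcal{A}_t + is \, \Id$, where $\mathcal{A}_t$ is self-adjoint on $\mathcal{D}$ (the boundary operator $\mathcal{R}$ determined by $R_0, R_1$ is understood to yield a self-adjoint realization of the formally symmetric Morse--Sturm expression). If $u \in \ker \mathcal{A}_z \setminus \{0\}$, then $\mathcal{A}_t u = -is \, u$, so $-is \in \spec(\mathcal{A}_t) \subset \R$, forcing $s = 0$, a contradiction. Consequently $\ker \mathcal{A}_z = \{0\}$ for $s \neq 0$, and by the equivalence above $\det R_z \neq 0$ off the real axis.

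\textbf{Main obstacle.} The only delicate point is the self-adjointness of $\mathcal{A}_t$ on $\mathcal{D}$, which is not fully spelled out in the statement; it rests on the assumption (implicit in the Morse--Sturm framework introduced above) that $R_0, R_1$ define self-adjoint boundary conditions, equivalently that the symplectic form $\omega(w, w') = \trasp{w} J w'$ vanishes on $\ker(R_0 + R_1 \psi_z(1))$-compatible boundary data. Once this is granted, the remaining ODE/linear-algebra arguments are routine, and the lemma follows immediately from the fundamental-solution description of $\ker \mathcal{A}_z$.
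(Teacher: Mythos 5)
Your proposal is correct and follows essentially the same route as the paper: reduce $\ker\mathcal A_z$ to $\ker R_z$ via the fundamental solution $\psi_z$ of the associated Hamiltonian system, and then use self-adjointness of $\mathcal A_t$ (so that $-is\in\spec(\mathcal A_t)\subset\R$ forces $s=0$) to place $\mathcal Z$ on the real axis. Your explicit remark that the argument hinges on $R_0,R_1$ defining a self-adjoint realization is a fair observation; the paper leaves this implicit as well.
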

\proof 
$(\Rightarrow)$ Let $u \in \ker \mathcal A_z$ and let $w$ be a solution of Equation~\eqref{eq:Hamiltonian-system}. Thus $w(1)= \psi_z(1) w(0)$  and $R_z w(0)= R_0 w(0) + R_1 \psi_z(1) w(0)= R_0 w(0)+ R_1 w(1)$ and by this  we get that $R_z w(0)=0$.  So now,  we assume by contradiction that $\det R_z \neq \{0\}$ then we have $w(0)=0$ which in particularly means that $u(0)= u'(0)=0$ (being $P(x)$ non-degenerate for every $x \in [0,1]$). Thus by the existence and uniqueness theorem for first order linear differential equations, we get that $w\equiv 0$.  Thus $\ker \mathcal A_z =\{0\}$ which is a contradiction. This concludes the proof of the only if part.\\
$(\Leftarrow)$ To prove the if part we start to observe that since  $\det R_z = \{0\}$, then 
there exists a non-trivial $w_0 \in \ker R_z$. We set $w(x)= \psi_z(x) w_0$ and we observe that $w$ is a solution of Equation \eqref{eq:Hamiltonian-system} and satisfies the boundary condition. It is immediate to check that, if $\mathcal P_z: \R^{2N} \to \R^N \oplus \R^N$ is the projection onto the second component, then $u(x)=\mathcal P_z w(x) $ lies in $\ker A_z$. 

In order to prove the last statement, it is enough to observe that the spectrum of a  self-adjoint operator is real; thus $\ker A_z \neq \{0\}$ can occur only at $z=t+is$ with $s=0$. This concludes the proof. 
\qed
\begin{defn}\label{def:rho}
Under the notation above, we define the {\em determinant map\/} $\rho$ as follows:
\[
 \rho: \Omega \ni z  \longmapsto \rho(z) :=  \det\, \mathcal R_z\in  \C.
\]
\end{defn}
We assume that $\mathcal A_0$ and $\mathcal A_1$ are non-degenerate  meaning that $\ker \mathcal A_0= \ker \mathcal A_1 = \{0\}$. In shorthand notation we simply refer to $t\mapsto \mathcal A_t$ or simply $\mathcal A$ as {\em admissible\/}. Thus in particular, by Lemma \ref{lem:well-posedness-rho}, we get that $0 \notin \rho\big(\partial \Omega\big)$. 
\begin{defn}\label{def:PW-index}
Under the previous notation, if $0 \notin \rho\big(\partial \Omega\big)$, we define the {\em degree index\/} associated to the pair $(\psi, R)$ as the integer  $\icon(\psi, R)$  defined by 
\begin{equation}\label{eq:HPW-index}
\icon (\psi, R) \= \deg ( \rho , \Omega , 0).
\end{equation}
We term {\em spectral index\/} of $\mathcal A$  the integer  $\ispec(\mathcal A)$  given by 
\begin{equation}\label{eq:spectral-HPW-index}
\ispec (\mathcal A) \= \spfl(\mathcal A, [0,1]).
\end{equation}
\end{defn}
\begin{rem}\label{rem:utile}
It is worth noticing that the Brouwer degree of a map $\rho$ on the open set $\Omega$ is related to the winding number as follows 
\begin{equation}
\deg(\rho, \Omega, 0)= \dfrac{1}{2\pi i} \int_{\partial \Omega} \dd\log \rho_z . 
\end{equation}
\end{rem}
Now, to the family $\mathcal A_z$ defined at Equation~\eqref{eq:Sturm-Liouville-equation},   we associate the operator valued one-form defined by $\Theta_z\=\dd \mathcal A_z \mathcal A_z^{-1}$. More explicitly 
\begin{equation}
\Theta_z\=\dd \mathcal A_z \mathcal A_z^{-1}= \partial_t \mathcal C_z \mathcal A_z^{-1}\, \dd t + \partial_s \mathcal C_z \mathcal A_z^{-1}\, \dd s= 
\partial_t \mathcal C_t \mathcal A_z^{-1}\, \dd t + i \mathcal A_z^{-1}\, \dd s 
\end{equation}
defined on the subset $\Set{z \in \Omega| \mathcal A_z \textrm{ has a bounded inverse}}$.

By invoking Lemma \ref{lem:well-posedness-rho} and \cite[Theorem 3.1, pag. 295-296]{GGK90}, $\mathcal A_z$ has a bounded inverse on $\partial \Omega$. Moreover $\mathcal A_z^{-1}$ is a Hilbert-Schmidt operator. In fact, we have
\begin{equation}
\mathcal A_z^{-1}\, u(x)= \int_0^1 K_z(x,y) u(y)\,dy 
\end{equation}
where the Green's kernel is given by 
	\begin{equation}
	K_z(x,y)= -\trasp{C}\widetilde K_z(x,y) D
	\end{equation}
	where $C=\trasp{[0, \Id]}, D=\trasp{[ \Id, 0]}$ and finally 
\begin{equation}
\widetilde K_z(x,y)\= 
\begin{cases}
\psi_z(x) \mathcal P_z \psi_z(y)^{-1} & 0 \leq x < y \leq 1\\
\psi_z(x)(\Id-\mathcal P_z) \psi_z^{-1}(y) & 0 \leq y < x \leq 1
\end{cases}
\end{equation}
for  $\mathcal P_z\=[R_0 + R_1 \psi_z(1)]^{-1} R_1 \psi_z(1)= R_z^{-1}R_1 \psi_z(1)$. \begin{rem}
 It is worth to note that $\widetilde K$ is not continuous on the diagonal  whilst $K$ is. In fact, it is immediate to observe that the multiplication of $K$ by $C$ on the left and $D$ on the right, actually corresponds to consider the lower left block entry in the block decomposition of $\widetilde K_z$ (the identity is invisible in that block).  
 \end{rem}
\begin{lem}\label{thm:trace-class-form}
The form $\Theta_z$ is a trace class operator-valued one form. 
\end{lem}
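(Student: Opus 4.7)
The plan is to reduce everything to trace-class membership of $\mathcal{A}_z^{-1}$ on $L^2=L^2([0,1],\C^N)$, since the explicit expression
\[
\Theta_z = \partial_t \mathcal C_t\,\mathcal A_z^{-1}\,\dd t + i\,\mathcal A_z^{-1}\,\dd s
\]
shows that the coefficient of $\dd s$ is (up to a scalar) $\mathcal A_z^{-1}$, while the coefficient of $\dd t$ is the product of the bounded multiplication operator by $\partial_t \mathcal C_t\in\mathscr C^0([0,1],\mathrm{Sym}(N))$ and $\mathcal A_z^{-1}$. Because $\trace(L^2)$ is a bilateral ideal in $\mathscr L(L^2)$ (cf.\ the remark following the definition of $\trace(H)$), it will suffice to prove that $\mathcal A_z^{-1}\in\trace(L^2)$ whenever $\mathcal A_z$ is invertible, i.e.\ on $\Omega\setminus\mathcal Z$, and in particular on $\partial\Omega$ by Lemma \ref{lem:well-posedness-rho}.

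The first step is to observe that $\mathcal A_z\colon\mathcal D\subset L^2\to L^2$ is a closed, densely defined operator with trivial kernel, so by Lemma \ref{lem:well-posedness-rho} (and a standard argument using the conjugate operator) it is a continuous bijection onto $L^2$. Because $P(x)$ is non-degenerate on $[0,1]$, the principal part $-\tfrac{d}{dx}(P(x)\tfrac{d}{dx})$ is non-degenerate second order, so the graph norm of $\mathcal A_z$ on $\mathcal D$ is equivalent to the $H^2$ norm: this is the elementary elliptic estimate for a second order ODE system, obtained by solving for $u''$ in terms of $\mathcal A_z u$ and lower order terms. By the open mapping theorem we conclude that
\[
\mathcal A_z^{-1}\colon L^2\big([0,1],\C^N\big)\longrightarrow H^2\big([0,1],\C^N\big)
\]
is bounded (its range lies in $\mathcal D\subset H^2$).

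The second step is to factor $\mathcal A_z^{-1}$, viewed as a map $L^2\to L^2$, as the composition
\[
L^2\xrightarrow{\ \mathcal A_z^{-1}\ } H^2\xhookrightarrow{\ \iota\ } L^2 ,
\]
where $\iota$ is the Sobolev embedding. By Lemma \ref{thm:Sobolev-inclusion-trace-class} $\iota\in\trace(H^2,L^2)$, and precomposing with the bounded map $\mathcal A_z^{-1}\colon L^2\to H^2$ keeps us in the trace class by the ideal property. Hence $\mathcal A_z^{-1}\in\trace(L^2)$, and consequently so is $\partial_t\mathcal C_t\,\mathcal A_z^{-1}$.

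No single step is really an obstacle; the main point to be careful about is the elliptic estimate identifying $\mathcal D$ with an $H^2$-bounded subspace (so that $\mathcal A_z^{-1}$ can be viewed as a bounded map into $H^2$), which is where the hypothesis that $P(x)$ is invertible on $[0,1]$ enters decisively. Once this is in place the conclusion follows from Lemma \ref{thm:Sobolev-inclusion-trace-class} and the ideal property of $\trace(L^2)$, proving that $\Theta_z$ is a trace-class operator-valued one form on the open set where $\mathcal A_z$ is invertible.
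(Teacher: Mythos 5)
Your proposal is correct and follows essentially the same route as the paper: factor $\mathcal A_z^{-1}$ through the trace-class Sobolev embedding $H^2\hookrightarrow L^2$ of Lemma~\ref{thm:Sobolev-inclusion-trace-class} and invoke the ideal property of $\trace(L^2)$. The only difference is that you spell out the elliptic estimate and open mapping argument showing $\mathcal A_z^{-1}\colon L^2\to H^2$ is bounded, a step the paper leaves implicit.
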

\begin{proof}
Since the domain $\mathcal D$ of $\mathcal A_z$ is a dense ($z$-independent) subspace of $\subset H^2\big([a,b], \C^N\big)$  and by taking into account Lemma \ref{thm:Sobolev-inclusion-trace-class}, we get that the operator $\mathcal A_z^{-1}$ is a trace class operator on $L^2$. In fact $\mathcal A_z^{-1}$  is the composition of a bounded operator and of a trace class operator and hence it is in the  trace class operator (being, in fact, trace class operators  an ideal of the ring of all compact operators). Thus  $\Theta_z$ is a trace class operator-valued one form. This concludes the proof. 
 \end{proof}
 The next proposition is crucial in order to establish the relation between the trace of $\Theta_z$ and the Brouwer  degree of the determinant map. 
 \begin{prop}\label{thm:traccia=grado}
 Under the above notation, we get 
 \begin{equation}
 \dfrac{1}{2\pi i}\Tr \int_{\partial \Omega} \Theta_z= \deg (\rho, \Omega, 0).
 \end{equation}
 \end{prop}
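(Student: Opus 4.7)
The strategy is to prove that both sides of the equality compute the same signed count $l-m$ of zero-eigenvalue crossings along the self-adjoint path $t\mapsto \mathcal{A}_t$.

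For the left-hand side, I apply the abstract trace formula of Proposition \ref{thm:lemma-4} to the family $A_z = \mathcal{A}_t + is\,\Id$ parametrized by $\Omega$. The needed hypotheses hold in our setting: admissibility yields that $\mathcal{A}_0$ and $\mathcal{A}_1$ are boundedly invertible; $\mathcal{A}_0^{-1}\in\trace(L^2)$ because it factors through the trace-class Sobolev embedding $H^2\hookrightarrow L^2$ of Lemma \ref{thm:Sobolev-inclusion-trace-class}; the perturbation $\mathcal{A}_t-\mathcal{A}_0=\mathscr{C}_t$ is multiplication by a $\mathscr{C}^1$-family of symmetric matrices, hence lies in $\mathscr{C}^1([0,1],\Lin_s(L^2))$; finally, the discreteness of $\spec(\mathcal{A}_t)$ together with continuity in $t$ allows us to choose $c>0$ small enough so that $\pm c\notin\spec(\mathcal{A}_t)$ for every $t\in[0,1]$, while still enclosing all eigenvalue branches that cross zero. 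Proposition \ref{thm:lemma-4} then gives
\begin{equation}
\frac{1}{2\pi i}\int_{\partial\Omega}\Tr\Theta_z \;=\; l-m,
\end{equation}
where $l$ (resp.\ $m$) counts the branches $\lambda_i(t)$ of the finite-dimensional reduction $P_t\mathcal{A}_tP_t|_{\Imm P_t}$ that cross zero from negative to positive (resp.\ positive to negative) as $t:0\to1$.

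For the right-hand side, Remark \ref{rem:utile} identifies $\deg(\rho,\Omega,0)=\tfrac{1}{2\pi i}\int_{\partial\Omega}d\log\rho_z$, and by Lemma \ref{lem:well-posedness-rho} the zero set of $\rho$ in $\Omega$ is contained in $(0,1)\times\{0\}$ and consists exactly of the instants $t_*$ where some branch $\lambda_i$ vanishes. Near each such $(t_*,0)$, the decomposition $\mathcal{A}_z=M_zN_z$ from Proposition \ref{thm:importante} together with Proposition \ref{lm:trace_det} shows that $\Tr(dM_zM_z^{-1})$ is exact and that the local winding of $d\log\rho_z$ coincides with that of $d\log\det S_z$, where $S_z=[P_t\mathcal{A}_tP_t+is\,\Id]|_{\Imm P_t}$ has $\det S_z=\prod_i(\lambda_i(t)+is)$. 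Each crossing factor $\lambda_i(t)+is$ has local Brouwer degree $\sgn(\lambda_i'(t_*))=\pm1$ at $(t_*,0)$ for transverse crossings, and the non-transverse case is handled by the same admissible homotopy used in the proof of Proposition \ref{thm:lemma-4}. Summing the local contributions over all zeros of $\rho$ yields $\deg(\rho,\Omega,0)=l-m$, and combining with the previous display proves the proposition.

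The main obstacle is the passage in the second step from $d\log\rho_z$, a one-form on $\Omega$ built from the boundary-data matrix $R_z$, to $d\log\det S_z$, a spectral determinant of the finite-dimensional block $P_t\mathcal{A}_tP_t|_{\Imm P_t}$. This is a local Hill-type identity asserting that, in a neighborhood of each zero $(t_*,0)$, $\rho(z)=\det R_z$ differs from $\det S_z$ by a continuous nowhere-vanishing factor; it is obtained by invoking Propositions \ref{thm:importante} and \ref{lm:trace_det} locally, together with the fact that exact trace-class operator-valued one-forms contribute nothing to the local winding.
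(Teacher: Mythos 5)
Your proposal does not follow the paper's route, and it contains a genuine gap at exactly the point where the real work lies. The paper proves Proposition \ref{thm:traccia=grado} by a direct Green's-kernel computation: writing $\mathcal A_z^{-1}$ as an integral operator with kernel $K_z$ built from the fundamental solution $\psi_z$ and the projection $\mathcal P_z=R_z^{-1}R_1\psi_z(1)$, it shows that the integrand $\Tr\bigl[J\,\dd B_z(x)\widetilde K_z(x,x)\bigr]$ is a total derivative in $x$, integrates it to obtain the \emph{pointwise} identity $\Tr\Theta_z=\Tr\bigl[\dd\psi_z(1)R_z^{-1}R_1\bigr]=\dd\log\det R_z$ on all of the set where $\mathcal A_z$ is invertible, and then simply integrates over $\partial\Omega$. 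No spectral decomposition, no eigenvalue crossings, no use of Propositions \ref{thm:importante}, \ref{lm:trace_det} or \ref{thm:lemma-4} is involved.

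The gap in your argument is the step you call the ``local Hill-type identity'': the claim that near each zero $(t_*,0)$ the boundary determinant $\rho(z)=\det R_z$ differs from the spectral determinant $\det S_z=\prod_i(\lambda_i(t)+is)$ by a nowhere-vanishing continuous factor. Propositions \ref{thm:importante} and \ref{lm:trace_det}, which you invoke for this, concern only the operator $\mathcal A_z$ and its trace-form $\Tr(\dd\mathcal A_z\mathcal A_z^{-1})$; they yield $\Tr\Theta_z=\Tr(\dd M_zM_z^{-1})+\dd\log\det S_z$ with the first term exact, but they say nothing whatsoever about $\det R_z$, which is a finite-dimensional determinant built from the ODE monodromy and the boundary matrices $R_0,R_1$. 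Lemma \ref{lem:well-posedness-rho} only tells you that $\rho$ and $\det S_z$ have the \emph{same zero set}; it does not control local multiplicities or orientations, and since $\rho$ is not holomorphic in $z$ (the $t$-dependence through $C_t$ is merely real $\mathscr C^1$), equality of zero sets gives no control on the local Brouwer degree. The identification of the local winding of $\dd\log\rho_z$ with that of $\dd\log\det S_z$ is therefore not a technical detail to be delegated to the cited propositions: it is equivalent to the proposition you are trying to prove, and asserting it without proof makes the argument circular. The missing bridge is precisely the kernel computation $\Tr\Theta_z=\dd\log\det R_z$ (or some equivalent of the Hill determinant formula, which in this paper is derived \emph{after} and \emph{from} Proposition \ref{thm:traccia=grado}). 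A secondary, fixable issue is your global application of Proposition \ref{thm:lemma-4}: a single level $c>0$ avoiding $\spec(\mathcal A_t)$ for every $t\in[0,1]$ need not exist, and one must localize to small rectangles around the finitely many degeneracy instants and use additivity of the winding number, as the paper does in the proof of Theorem \ref{thm:main}.
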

\proof 
It is well-known that the trace of an integral operator belonging to the trace class can be computed by integrating the trace of its kernel. (Cfr. \cite{LT98} for further details).  Thus, by a direct calculation, we get: 
\begin{equation}
	\Tr \Theta_z = \int_0^1 \Tr\big[\dd C_z(x)\, K_z(x,x)\big]\, \dd x= -\int_0^1 \Tr\big[J \dd B_z(x)\widetilde K_z(x,x)\big]\, \dd x
	\end{equation}
where the last equality follows by taking into account that 
\begin{equation}
	\dd C_z(x)= [\Id,0]J\, \dd B_z(x) \trasp{[0,\Id]} \quad \textrm{ and } \quad 
	K_z(x,y)= -\trasp{C}\widetilde K_z(x,y) D
	\end{equation}
as well as the commutativity of trace.
On the other hand, 
\begin{multline}\label{eq:1-prop4.4}
	\Tr\Big[J\dd B_z(x)\psi_z(x)(\Id-\mathcal P_z) \psi_z^{-1}(x)\Big]= \\
	\Tr\Big[J\dd B_z(x)\psi_z(x)(-\mathcal P_z) \psi_z^{-1}(x)\Big]+\Tr\Big[ J\dd B_z(x)\Big]=\\ 
	\Tr\Big[J\dd B_z(x)\psi_z(x)(-\mathcal P_z) \psi_z^{-1}(x)\Big].
	\end{multline}
Moreover 
	\begin{multline}\label{eq:2-prop4.4}
\Tr\Big[J\dd B_z(x)\psi_z(x)(-\mathcal P_z) \psi_z^{-1}(x)\Big]=\\
 -\Tr\Big[J\dd B_z(x) \psi_z(x)\mathcal P_z\psi_z^{-1}(x)\Big]= -\Tr\Big[\dd\psi'_z(x)\mathcal P_z \psi_z^{-1}(x)-JB_z(x) \dd\psi_z(x)\mathcal P_z\psi_z^{-1}(x)\Big]\\=-
	\Tr\Big[\dd\psi'_z(x)\mathcal P_z \psi_z^{-1}(x)- \psi_z'(x)\psi_z^{-1}(x) \dd\psi_z(x)\mathcal P_z\psi_z^{-1}(x)\Big]\\=
	-\Tr\dfrac{\dd}{\dd x}\Big[\dd \psi_z(x) \mathcal P_z\psi_z^{-1}(x)\Big].
	\end{multline}
Putting together Equation~\eqref{eq:1-prop4.4} and Equation~\eqref{eq:2-prop4.4}, we finally get 
\begin{multline}
\Tr \Theta_z =  \int_0^1 \Tr\big[\dd C_z(x)\, K_z(x,x)\big]\, \dd x\\=- \int_0^1 \Tr\big[J \dd B_z(x)\widetilde K_z(x,x)\big]\, \dd x\\=\int_0^1
\dfrac{\dd}{\dd x}\Tr\Big[\dd \psi_z(x) \mathcal P_z\psi_z^{-1}(x)\Big]\, \dd x= 
\Tr\Big[\dd \psi_z(1) \mathcal P_z\psi_z^{-1}(1)\Big]\\= \Tr\Big[\dd \psi_z(1) R_z^{-1} R_1\psi_z(1)\psi_z^{-1}(1)\Big]= \Tr\Big[\dd \psi_z(1) R_z^{-1} R_1\Big].
\end{multline}
By Jacobi's formula, we get also that
\begin{equation}
\dd\log \det R_z = \Tr \Big[R_z^{-1}\dd R_z\Big]
= \Tr\Big[R_z^{-1}R_1\dd\psi_z(1)\Big]= \Tr\Big[\dd\psi_z(1)R_z^{-1}R_1\Big]
\end{equation}
and thus 
\begin{equation}
\Tr \Theta_z= \dd \log \det R_z.
\end{equation}
Integrating over $\partial \Omega$, we than conclude  that 
\begin{equation}
\dfrac{1}{2\pi i} \int_{\partial \Omega} \Theta_z= \dfrac{1}{2\pi i}\int_{\partial \Omega} \dd\log \det R_z= \deg(\rho, \Omega, 0).
\end{equation}
This concludes the proof. 
\qed 
\begin{rem}
We observe that this computation is not affected if we replace the term $\psi_z(x)(\Id-\mathcal P_z) \psi_z^{-1}(y)$ entering in 
		$\widetilde K_z(x,y)$ with  $-\psi_z(x)\mathcal P_z\psi_z^{-1}(y)$.	
		\end{rem}
Now we are in position to state and to prove the main result of this section. This result establish an equality between the spectral flow  for a path of (unbouded) self-adjoint trace class resolvent operators and the degree index.

\begin{rem}
		Recall that if an operator has compact (trace class) resolvent, it is a Fredholm operator. The spectral flow is defined for a path of self-adjoint Fredholm operators, so it is also well defined for a path of self-adjoint  operators having compact resolvent.
\end{rem}

\begin{thm}\label{thm:main}
		Under the above notation and if $\mathcal A$ is admissible, then \ we have 
		\begin{equation}
		\ispec(\mathcal A)= \icon(\psi, R).
		\end{equation}
	\end{thm}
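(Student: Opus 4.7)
The plan is to combine Proposition \ref{thm:traccia=grado} with Proposition \ref{thm:lemma-4}. The first gives a direct identification of the Brouwer degree of the determinant map with the contour integral of the trace of $\Theta_z$; the second identifies that same contour integral with the algebraic count of eigenvalue crossings, which is precisely the spectral flow. The statement then follows by equating the two expressions.

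More precisely, first I would invoke Proposition \ref{thm:traccia=grado} to write
\[
\icon(\psi,R) \;=\; \deg(\rho,\Omega,0) \;=\; \frac{1}{2\pi i}\int_{\partial\Omega}\Tr\Theta_z,
\]
where the operator-valued one-form $\Theta_z = \dd \mathcal{A}_z\,\mathcal{A}_z^{-1}$ is trace class (Lemma \ref{thm:trace-class-form}) and is defined on a neighborhood of $\partial\Omega$ (Lemma \ref{lem:well-posedness-rho} gives invertibility of $\mathcal{A}_z$ on $\partial\Omega$ by admissibility and self-adjointness). It then remains to identify $\frac{1}{2\pi i}\int_{\partial\Omega}\Tr\Theta_z$ with $\spfl(\mathcal{A},[0,1])$.

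To do so I cannot apply Proposition \ref{thm:lemma-4} directly, because its spectral separation hypothesis ($\pm c \notin \spec(\mathcal{A}_t)$ for all $t\in[0,1]$) is generally too strong: eigenvalues move continuously but may leave any fixed annulus. I would therefore subdivide $[0,1]$ into a partition $0 = t_0 < t_1 < \cdots < t_n = 1$ chosen so that
\begin{itemize}
\item $\mathcal{A}_{t_k}$ is invertible for every $k=0,1,\ldots,n$ (possible because the values of $t$ for which $\ker \mathcal{A}_t \neq \{0\}$ form a discrete set in $[0,1]$, thanks to the compact resolvent of $\mathcal{A}_t$ and real analyticity/continuity of the eigenvalue branches under the $\mathscr C^1$ perturbation $\mathcal{C}_t$);
\item on each subinterval $[t_{k-1},t_k]$ there exists $c_k > 0$ with $\pm c_k \notin \spec(\mathcal{A}_t)$ for all $t\in[t_{k-1},t_k]$ (possible by continuous dependence of the discrete spectrum and compactness of $[t_{k-1},t_k]$, after refining the partition if necessary).
\end{itemize}
For each $k$, set $\Omega_k \= [t_{k-1},t_k]\times[-1,1]$. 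On $\Omega_k$ the hypotheses of Proposition \ref{thm:lemma-4} are met: $\mathcal{A}_{t_{k-1}}^{-1}\in \trace(L^2)$ by Lemma \ref{thm:trace-class-form}, $\mathcal{A}_t = \mathcal{A}_{t_{k-1}} + (\mathcal{C}_t - \mathcal{C}_{t_{k-1}})$ is a bounded self-adjoint perturbation of class $\mathscr C^1$, and the spectral separation holds. Hence
\[
\frac{1}{2\pi i}\int_{\partial\Omega_k}\Tr\Theta_z \;=\; l_k - m_k,
\]
where $l_k$ (resp. $m_k$) counts, with multiplicity, the eigenvalues of $\mathcal{A}_t$ on $[t_{k-1},t_k]$ crossing $0$ from negative to positive (resp. positive to negative).

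To conclude I would sum over $k$: the interior vertical segments $\{t_k\}\times[-1,1]$ are shared by $\partial\Omega_{k}$ and $\partial\Omega_{k+1}$ with opposite orientations, and the form $\Tr\Theta_z$ is well-defined and continuous on each such segment (at $s=0$ by the choice of the $t_k$, and for $s\neq 0$ by self-adjointness of $\mathcal{A}_{t_k}$), so these contributions cancel pairwise. This yields
\[
\frac{1}{2\pi i}\int_{\partial\Omega}\Tr\Theta_z \;=\; \sum_{k=1}^n (l_k - m_k) \;=\; l - m \;=\; \spfl(\mathcal{A},[0,1]) \;=\; \ispec(\mathcal{A}).
\]
The main obstacle will be verifying the existence of a suitable partition: one must ensure simultaneously that the endpoints are non-degenerate and that a uniform spectral gap about $\pm c_k$ exists on each piece. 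This hinges on the fact that the zeros of $t\mapsto \det R_{t+i0}$ are isolated in $[0,1]$ (so the non-degenerate points are dense) and that, locally, only finitely many eigenvalue branches can approach any prescribed real level, which is a consequence of the compact resolvent property together with Lemma \ref{thm:compact-resolvent-perturbation} and the $\mathscr C^1$-dependence of $\mathcal{C}_t$. Everything else is a routine assembly of already-established results.
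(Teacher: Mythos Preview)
Your strategy coincides with the paper's: identify $\icon(\psi,R)$ with the contour integral of $\Tr\Theta_z$ via Proposition~\ref{thm:traccia=grado}, then evaluate that integral as the spectral flow via Proposition~\ref{thm:lemma-4} applied on pieces. The gap is precisely in the step you yourself flag as the main obstacle.

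Your claim that the zeros of $t\mapsto\det R_t$ are isolated is unjustified: under the standing hypotheses $t\mapsto C_t$ is only of class $\mathscr C^1$, so nothing prevents an eigenvalue branch of $\mathcal A_t$ from sitting at $0$ on an entire subinterval of $(0,1)$, in which case no partition with invertible endpoints exists. The paper does not attempt to prove isolation; instead it \emph{perturbs}. For almost every small $\delta\in\R$ the path $t\mapsto\mathcal A_t-\delta\,\Id$ has only regular, hence finitely many, crossings (by \cite[Theorem~4.22]{RS95}), and both sides of the desired identity are stable under this perturbation: the spectral flow by homotopy invariance with nondegenerate endpoints, and the contour integral because it is integer-valued and $\partial\Omega$ remains in the invertibility locus of $\mathcal A_z-\delta\,\Id$ for small $\delta$. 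Once finiteness of crossings is secured, your partition-and-cancel computation is equivalent to the paper's argument, which instead uses that $\Tr\Theta_z$ is a \emph{closed} one-form away from the crossings (Proposition~\ref{thm:importante}) to deform $\partial\Omega$ to small rectangles $\partial\Omega_i$ around each crossing before applying Proposition~\ref{thm:lemma-4}.
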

\proof 
		We start to recall that pointwise, the path $\mathcal A$ is defined by $\mathcal A_t= \mathcal A_0+ \mathcal C_t$. 
		
		First of all, without leading in generalities, we  assume that $\Set{t \in (0,1)|\ker \mathcal A_t\neq \{0\}}$ has  has finite cardinality. If not,  by \cite[Theorem 4.22]{RS95},  for almost every $\delta\in \mathbb{R}$, $\mathcal A_t-\delta \Id$ has only regular crossings. (We refer to  Section \ref{sec:sf}  and references therein, for the basic definitions and properties on the spectral flow). Since regular crossings are isolated, then on a compact interval are in a finite number.
				We observe also that  
		\begin{equation}
		\lim_{\delta\to 0}\dfrac{1}{2\pi i}\Tr \int_{\partial \Omega} 
		\dd\mathcal A_z (\mathcal A_z-\delta \Id)^{-1}= \dfrac{1}{2\pi i}\Tr \int_{\partial \Omega} 
		\dd\mathcal A_z (\mathcal A_z)^{-1},
		\end{equation}
		and by taking into account the homotopy invariance of the spectral flow, we get that if $\delta$ is sufficiently small then $\spfl(\mathcal A_t, t \in [0,1]) =\spfl(\mathcal A_t-\delta\Id, t \in [0,1]) $.  So, we only need to prove the theorem in the case in which  $\Set{t\in [0,1]|\ker  \mathcal A_t\neq \{0\}}=\Set{t_1,\ldots, t_k}$. By using Lemma~\ref{lem:well-posedness-rho}, we recall that 	\[
	\Set{(t_1,0),\ldots, (t_k,0)}= \Set{ z \in \Omega|\ker  \mathcal A_z\neq \{0\} }.
	\]		
By Lemma \ref{thm:mancante}, for each $1\le i \le k$, there is a rectangular neighborhood  $\Omega_i$  of $t_i$ such that the decomposition $A_z=M_zN_z$ is well defined on $\Omega_i$. 
		By Lemma \ref{thm:trace-class-form} $\dd\mathcal A_z\mathcal A_z^{-1}$ is a trace class operator-valued one form and by Proposition~\ref{thm:importante} its trace is  closed; thus  we have
		\begin{equation}
		\dfrac{1}{2\pi i}\Tr \int_{\partial \Omega} 
		\dd\mathcal A_z \mathcal A_z^{-1}= \sum_{i=1}^k \dfrac{1}{2\pi i}\Tr \int_{\partial \Omega_i} 
		\dd\mathcal A_z \mathcal A_z^{-1}.
		\end{equation}
		By invoking Proposition~\ref{thm:lemma-4} we infer that 
		\begin{equation}
		\dfrac{1}{2\pi i}\Tr \int_{\partial \Omega_i} 
		\dd\mathcal A_z \mathcal A_z^{-1}= l-m
		\end{equation}
		where $l$ (resp. $m$) is  the number of eigenvalues (counted with multiplicity)  which cross the real axis with positive (resp. negative) derivative which is nothing but the spectral flow of the path $t \mapsto \mathcal A_t$ on a small neighborhood of $t_i$ .
		So we have 
		\begin{equation}
		\sum_{i=1}^k \dfrac{1}{2\pi i}\Tr \int_{\partial \Omega_i} 
		\dd\mathcal A_z \mathcal A_z^{-1}=\spfl({\mathcal A_t, t \in [0, 1]}).
		\end{equation}	
		Thus we get  
		\begin{equation}
		\spfl(\mathcal A_t, t \in[0,1]) = \dfrac{1}{2\pi i}\Tr \int_{\partial \Omega} \Theta_z.
		\end{equation}
		By taking into account Definition \ref{def:PW-index}, Remark \ref{rem:utile} and Proposition \ref {thm:traccia=grado}, we get the thesis. 
		\qed
	
In many interesting applications often occurs that $P(x)$ (the principal symbol fo the Morse-Sturm operator) is actually represented by a  positive definite (symmetric) matrix. In this case, in fact,  the operator $\mathcal A_t$ has a well-defined Morse index for each $t\in [0,1]$. Denoting the Morse index of $\mathcal A_t$ by $m^-(\mathcal A_t)$, then we get the following immediate consequence. 
	\begin{cor}
	Under the assumption of Theorem~\ref{thm:main} and assuming that  $P(x)$ is positive definite for every $x \in [0,1]$, then we get 
		\[
		m^-(\mathcal A_0)-m^-(\mathcal A_1)= \iota_{PW}(\psi,R) .
		\]
\end{cor}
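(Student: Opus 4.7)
The strategy is to reduce the corollary to Theorem~\ref{thm:main} via the classical identity
\begin{equation}\label{eq:sf-morse}
\spfl(\mathcal A,[0,1]) = m^-(\mathcal A_0) - m^-(\mathcal A_1),
\end{equation}
which holds whenever each operator in the path has finite Morse index. Since Theorem~\ref{thm:main} gives $\spfl(\mathcal A,[0,1]) = \icon(\psi,R)$, the corollary will follow at once once \eqref{eq:sf-morse} is justified. There are therefore two things to verify: that each $\mathcal A_t$ has a finite Morse index, and that the spectral flow of $\mathcal A$ is computed by this Morse index difference.

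For the first point I would exploit the hypothesis $P(x) > 0$. Since $P \in \mathscr C^1([0,1], \SSS(N))$ and $[0,1]$ is compact there exists $p_0 > 0$ with $P(x) \geq p_0 \Id$, so the quadratic form associated with $\mathcal A_t$ has a coercive principal part $\int_0^1 \langle P u', u'\rangle\,dx$. The cross term $2\,\Real\langle Q u, u'\rangle$ is absorbed into the principal part by a standard Young inequality, while the zero-order contributions from $G$ and $C_t$ are uniformly bounded; any finite-dimensional boundary contribution coming from the self-adjoint realization $\mathcal R$ is relatively form-small and leaves semiboundedness intact. Hence $\mathcal A_t$ is bounded below, and since it already has trace-class resolvent (the key ingredient underlying Section~\ref{sec:HPW-index}), its spectrum is a discrete real sequence with at most finitely many negative elements; thus $m^-(\mathcal A_t) < \infty$.

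For \eqref{eq:sf-morse} itself I would repeat the local argument from the proof of Theorem~\ref{thm:main}: after a small generic perturbation $\mathcal A_t - \delta\,\Id$, every zero-crossing of an eigenvalue along the path is regular, and at such a crossing a single simple eigenvalue moves transversally through $0$. An ascending crossing lowers $m^-$ by one and contributes $+1$ to $\spfl$, while a descending crossing does the reverse. Summing over crossings yields \eqref{eq:sf-morse} for the perturbed path, and passing to the limit $\delta \to 0^+$ preserves the identity by the homotopy invariance argument already used in the proof of Theorem~\ref{thm:main}. Combining \eqref{eq:sf-morse} with Theorem~\ref{thm:main} delivers the corollary. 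The only delicate step is the semiboundedness of $\mathcal A_t$ under arbitrary self-adjoint boundary data, which is settled by the form-domain argument in the previous paragraph; the remaining bookkeeping is routine.
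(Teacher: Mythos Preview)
Your proof is correct and follows the same route as the paper: invoke Theorem~\ref{thm:main} to get $\spfl(\mathcal A,[0,1])=\icon(\psi,R)$, and then identify the spectral flow with the Morse index difference when each $\mathcal A_t$ is bounded below. The paper simply cites Equation~\eqref{eq:miserve} for this last step, whereas you supply the semiboundedness and crossing-form justification explicitly; this is extra detail rather than a different argument.
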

\proof 
The proof readily follows by Theorem \ref{thm:main} and Equation~\eqref{eq:miserve}. 
\qed


\subsection{Hill's determinant formula}

In this paragraph we establish the relation between the trace formula proved in Theorem \ref{thm:main} and the classical Hill's determinant formula. For, let us consider the eigenvalues problem for the standard Morse-Sturm system, given by 
\begin{equation}\label{eq:second-order}
-\dfrac{d}{dx}\left[P(x) \dfrac{du}{dx}+ Q(x)u\right]+ \trasp{Q}(x)\dfrac{du}{dx}+ \big(G(x) + t G_1(x) \big)u=0 ,\qquad x \in [0,1]
\end{equation}
for $\lambda \in \R$, $Q\in \mathscr C^1\big([0,1], \Mat(N,\R)\big)$, $P,R, R_1 \in \mathscr C^1\big([0,1], \SSS(N)\big)$ and $P(x)$ is non-degenerate for every $x \in [0,1]$. By making use of the Legendre transformation, the linear system given in Equation \eqref{eq:second-order} reduces to the following Hamiltonian system 
\begin{equation}\label{eq:Ham-eigenvalue}
z'(x)= J B_t (x) z(x). 
\end{equation}
Let  $\Lambda_0 $ and $\Lambda_1$ be two Lagrangian subspaces of $(\R^{2n}, \omega_0)$.  Let  $Z_0$ and $Z_1$ be  two $2n\times n$ matrix such that the spans of their column vectors  are $\Lambda_0$ and $\Lambda_1$ respectively and we call them the Lagrangian frames of $\Lambda_0$ and $\Lambda_1$ .
Let us denote by $\psi_\lambda$ the fundamental solution of Equation \eqref{eq:Ham-eigenvalue}.

It is well-known that  the operator Morse-Sturm operator 
\[
\displaystyle \mathcal A\=-\dfrac{d}{dx}\left[P(x) \dfrac{d}{dx}+ Q(x)\right]+ \trasp{Q}(x)\dfrac{d}{dx}+ G(x)
\]
 is self-adjoint in $L^2(I, \R^n)$  with dense domain
\[
\mathcal D(\Lambda_0, \Lambda_1) =\Set{u \in H^2([0,1], \R^n)| z(0) \in \Lambda_0, z(1) \in \Lambda_1}.
\]
We assume $\mathcal A$ is non-degenerate (meaning that $\ker \mathcal A=\{0\}$). It is clear that $\lambda$ is a non-zero eigenvalue of $\mathcal A$ if and only if $-\dfrac{1}{\lambda}$ is an eigenvalue of $\mathcal G_1\mathcal A^{-1}$, where $\mathcal G_1$ is the operator on $L^2$ pointwise induced by $G_1$. The following result holds. See \cite[Theorem 1.1]{HW16})
\begin{thm}(Hu \& Wang, 2016)\label{thm:HW2016} If $\mathcal A$ is non-degenerate, then we have 
\begin{equation}\label{eq:Hill-formula}
\prod_{j}\Big(1- \lambda_j^{-1}\Big)= \det\big(\psi_1(1) Z_0, Z_1\big)\cdot \det\big(\psi_0(1) Z_0, Z_1\big)^{-1}
\end{equation}
where the left-hand side  is an infinite product which runs on all the eigenvalues $\lambda_j$ counted with multiplicity.
\end{thm}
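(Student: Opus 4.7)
The plan is to exponentiate, on the real segment $t\in[0,1]$, the trace--determinant identity underlying Proposition \ref{thm:traccia=grado}. I regard both sides of \eqref{eq:Hill-formula} as values at $t=1$ of entire functions of a complex parameter $t$, show they share the same logarithmic derivative, and match their values at $t=0$.

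\textbf{Left-hand side as a Fredholm determinant.} Set $\mathcal A_t\=\mathcal A+t\,\mathcal G_1$ and $T\=\mathcal A^{-1}\mathcal G_1$. By Lemma \ref{thm:Sobolev-inclusion-trace-class} the inclusion $H^2\hookrightarrow L^2$ is trace class, so $\mathcal A^{-1}$ is trace class and hence so is $T$. The non-zero eigenvalues of $T$ are $\{-\lambda_j^{-1}\}_j$ (as recalled just before the theorem), so Lidskii's theorem gives $\det(I+T)=\prod_j(1-\lambda_j^{-1})$. Define the entire function $F(t)\=\det(I+tT)$: then $F(0)=1$ and, since $\mathcal A_t=\mathcal A(I+tT)$, one has $\mathcal A_t^{-1}\mathcal G_1=(I+tT)^{-1}T$, whence
\[
\frac{F'(t)}{F(t)}=\Tr\bigl[(I+tT)^{-1}T\bigr]=\Tr\bigl[\mathcal G_1\,\mathcal A_t^{-1}\bigr]
\]
wherever $\mathcal A_t$ is invertible.

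\textbf{Right-hand side via the trace formula.} I choose matrices $R_0,R_1$ as in Definition \ref{def:bc} encoding the Lagrangian boundary conditions $(\Lambda_0,\Lambda_1)$; a routine change-of-frame computation produces a non-zero $t$-independent constant $c$ with $\det R_t=c\cdot\det\bigl(\psi_t(1)Z_0,\;Z_1\bigr)$. The kernel calculation inside the proof of Proposition \ref{thm:traccia=grado}, specialised to the real axis and to the one-parameter perturbation $C_t=t\,G_1$, gives
\[
\Tr\bigl[\mathcal G_1\,\mathcal A_t^{-1}\bigr]=\frac{d}{dt}\log\det R_t,
\]
so $G(t)\=\det(\psi_t(1)Z_0,Z_1)/\det(\psi_0(1)Z_0,Z_1)$ satisfies $G'(t)/G(t)=\Tr[\mathcal G_1\,\mathcal A_t^{-1}]$ and $G(0)=1$.

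\textbf{Conclusion and main obstacle.} Both $F$ and $G$ are entire (the former as a Fredholm determinant of an entire trace-class family; the latter because $t\mapsto\psi_t(1)$ is entire by analytic dependence of ODE flows on parameters), agree at $t=0$, and have equal logarithmic derivatives on the connected open set where $\mathcal A_t$ is invertible. By the identity principle, $F\equiv G$; evaluating at $t=1$ yields \eqref{eq:Hill-formula}. The subtle point is that $\log F$ and $\log G$ develop logarithmic singularities exactly at the $\lambda_j$, so one must not naively integrate $\Tr[\mathcal G_1\mathcal A_t^{-1}]$ across such a point; the remedy is precisely to compare the entire functions $F,G$ themselves rather than their logarithms. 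Verifying the $t$-independence of the constant $c$ is a routine linear-algebra check, as $R_0,R_1,Z_0,Z_1$ do not depend on $t$.
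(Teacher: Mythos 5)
Your argument is sound, but note that the paper does not actually prove Theorem~\ref{thm:HW2016}: it is quoted from Hu--Wang \cite{HW16} (see the citation preceding the statement and Remark~\ref{rem:hill}), and the only computation the paper supplies is the linear-algebra identity $\det(R_0+R_1\psi)\det(-PZ_0,\,QZ_1)=\det(\psi Z_0,Z_1)\det A$ relating $\rho$ to the frame determinant --- which is exactly your constant $c$. What you have written is therefore a genuine alternative derivation, and an attractive one: it exhibits the Hill formula as the ``integrated along the real axis'' version of the trace identity $\Tr\Theta_z=\dd\log\det R_z$ established inside Proposition~\ref{thm:traccia=grado}, whereas the paper only uses that identity in differentiated form (to compute a winding number) and imports the multiplicative statement from elsewhere. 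Your handling of the main analytic pitfall --- comparing the entire functions $F$ and $G$ rather than integrating $\Tr[\mathcal G_1\mathcal A_t^{-1}]$ through the poles at $t=\lambda_j$ --- is correct: $F'G-FG'$ vanishes on a real interval accumulating at $0$, hence identically, and $F(0)=G(0)=1$ forces $F\equiv G$. Two points deserve a sentence each in a polished write-up. First, the product formula $\det(I+T)=\prod_j(1-\lambda_j^{-1})$ via Lidskii requires the $\lambda_j$ to be counted with the \emph{algebraic} multiplicity of $-\lambda_j^{-1}$ as an eigenvalue of the (generally non-self-adjoint) operator $T=\mathcal A^{-1}\mathcal G_1$; this is the convention implicitly adopted when the paper identifies the left-hand side with the Fredholm determinant, but it should be said explicitly since $\dim\ker\mathcal A_{\lambda_j}$ is only the geometric multiplicity. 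Second, $\mathcal A^{-1}$ being trace class uses non-degeneracy plus Lemmas~\ref{thm:Sobolev-inclusion-trace-class} and~\ref{thm:ffava}, and the identity $\Tr[\mathcal G_1\mathcal A_t^{-1}]=\tfrac{d}{dt}\log\det R_t$ needs the Green-kernel computation of Proposition~\ref{thm:traccia=grado} rerun with the Lagrangian boundary matrices $R_0,R_1$ built from $Z_0,Z_1$; both are routine but should be flagged as the places where the paper's machinery is being invoked.
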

We observe that the left-hand side  in Equation~\eqref{eq:Hill-formula} is the Fredholm determinant of $\det(\Id + \mathcal G_1 \mathcal A^{-1})$. 
Since $Z_0$ is a basis of $V_0$, we have $Z_0^*JZ_0=0$.
Similarly we have $Z_1^*JZ_1=0$.
Then $\begin{pmatrix}
	(JZ_0)^*\\0
\end{pmatrix} $and $\begin{pmatrix}
0\\(JZ_1)^*
\end{pmatrix}$ are $R_0$ and $R_1$ of the boundary condition $z_0\in \Lambda_0 , z_1\in \Lambda_1$.
Choose matrix $P,Q$ such that $\begin{pmatrix}
R_0&R_1\\P&Q
\end{pmatrix}$ is invertible.

Then we have 
$\begin{pmatrix}
R_0&R_1\\P&Q
\end{pmatrix}\begin{pmatrix}
-Z_0&0\\0&Z_1
\end{pmatrix}=\begin{pmatrix}
0&0\\-PZ_0&QZ_0
\end{pmatrix}$.
Then the rank of $\begin{pmatrix}
0&0\\-PZ_0&QZ_0
\end{pmatrix}$ is $2n$ ,so $\begin{pmatrix}-PZ_0& QZ_0\end{pmatrix}$ is invertible.
Let $A=\begin{pmatrix}
	R_0&R_1\\P&Q
\end{pmatrix}$.

Note that
\begin{multline}
\det(\psi Z_0,Z_1)\det A =\det(\begin{pmatrix}
R_0&R_1\\P&Q
\end{pmatrix}\begin{pmatrix}Id&-Z_0&0\\\psi&0&Z_1\end{pmatrix})\\
=\det\begin{pmatrix}
R_0+R_1\psi &0&0\\
P+Q\psi &-PZ_0&QZ_1
\end{pmatrix}=\det(R_0+R_1\psi )\det(\begin{pmatrix}
-PZ_0&QZ_1
\end{pmatrix}).
\end{multline}
It follows that $\rho(z)=\det(\psi_z(1))\det (A)\det(\begin{pmatrix}
-PZ_0&QZ_1
\end{pmatrix})^{-1}$ .
Then we have
 \begin{equation}
 \prod_{j}\Big(1- \lambda_j^{-1}\Big)= \rho(1)\rho(0)^{-1}.
 \end{equation}

\begin{rem}\label{rem:hill}
In fact, in the proof of \ref{thm:HW2016}, they proved that
\[
\det(1+z\mathcal G_1 \mathcal A^{-1})=\rho(z)\rho(0)^{-1}, z\in \mathbb{C}.
\]
\end{rem}
\begin{cor}
		Let	$ f(z)=\det(1+(t\mathcal G_1+i s\Id) \mathcal A^{-1})$. 
		Assume that  $\mathcal A$ and $\mathcal A+\mathcal G_1$ are both invertible. Let $\Omega=[0,1]\times [-1,1] $. We have
		$\deg(f,\Omega,0)=\spfl(\mathcal A+t\mathcal G_1,t\in [0,1])$.
	\end{cor}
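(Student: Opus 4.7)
The plan is to express $f$ as a nonzero scalar multiple of the determinant map $\rho$ of Definition \ref{def:rho} applied to the two-parameter family
\[
\mathcal{A}_z := \mathcal{A} + t\mathcal{G}_1 + is\,\Id, \qquad z = t+is \in \Omega,
\]
and then invoke Theorem \ref{thm:main}. First, since $1 + (t\mathcal{G}_1 + is\,\Id)\mathcal{A}^{-1} = \mathcal{A}_z \mathcal{A}^{-1}$ and $\mathcal{A}^{-1}$ is trace class (Lemma \ref{thm:trace-class-form}), the Fredholm determinant
\[
f(z) = \det\bigl(\mathcal{A}_z \mathcal{A}^{-1}\bigr)
\]
is well-defined and analytic on $\Omega$. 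The generalized Hill's determinant formula, obtained by applying the computation sketched between Theorem \ref{thm:HW2016} and Remark \ref{rem:hill} to the perturbation $\mathcal{C}_z = t\mathcal{G}_1 + is\,\Id$, yields
\[
f(z) = \rho(z)\,\rho(0)^{-1}, \qquad z \in \Omega,
\]
where $\rho(z) = \det\bigl(R_0 + R_1 \psi_z(1)\bigr)$ and $\psi_z$ is the fundamental solution of the Hamiltonian system associated with $\mathcal{A}_z$.

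Second, since $\mathcal{A}$ is invertible, Lemma \ref{lem:well-posedness-rho} gives $\rho(0) \in \C^{*}$. Multiplication by the nonzero constant $\rho(0)^{-1}$ is a diffeomorphism of $\C$ fixing $0$; viewed as an $\R$-linear map of $\R^2$ it has positive determinant (as $\C^*$ is connected), hence is orientation-preserving. Consequently the zero sets of $f$ and $\rho$ in $\Omega$ coincide and their local indices match, so
\[
\deg(f, \Omega, 0) = \deg(\rho, \Omega, 0).
\]

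Third, the path $t \mapsto \mathcal{A} + t\mathcal{G}_1$ is admissible in the sense of Definition \ref{def:PW-index}: the endpoints $\mathcal{A}$ and $\mathcal{A} + \mathcal{G}_1$ are invertible by hypothesis, and Lemma \ref{lem:well-posedness-rho} ensures $\rho$ does not vanish on $\partial\Omega$ (zeros of $\rho$ are confined to $s = 0$). Applying Theorem \ref{thm:main} to this admissible path gives
\[
\deg(\rho, \Omega, 0) = \icon(\psi, R) = \ispec(\mathcal{A} + t\mathcal{G}_1) = \spfl(\mathcal{A} + t\mathcal{G}_1, t \in [0,1]),
\]
which is the desired conclusion.

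The main obstacle is justifying the generalized Hill's formula $f(z) = \rho(z)/\rho(0)$ for the two-parameter complex perturbation $\mathcal{C}_z = t\mathcal{G}_1 + is\,\Id$, since Remark \ref{rem:hill} states the identity only for the single-parameter perturbation $z\mathcal{G}_1$. However, the derivation is essentially formal: the Fredholm determinant of $\mathcal{A}_z\mathcal{A}^{-1}$ depends on the perturbation only through the fundamental solution $\psi_z$ of the associated Hamiltonian system, and the cancellation involving $\det(A)\cdot\det(-PZ_0,QZ_1)^{-1}$ performed in the computation following Theorem \ref{thm:HW2016} is independent of the specific form of $\mathcal{C}_z$, so it carries over verbatim.
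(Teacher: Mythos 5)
Your overall skeleton coincides with the paper's: reduce everything to the identity $f(z)=\rho(z)\rho(0)^{-1}$, note that multiplication by the nonzero constant $\rho(0)^{-1}$ does not change the Brouwer degree, check admissibility of $t\mapsto\mathcal A+t\mathcal G_1$, and conclude via Proposition \ref{thm:traccia=grado} and Theorem \ref{thm:main}. The gap sits exactly at the step you yourself flag as ``the main obstacle'': the identity $f(z)=\rho(z)\rho(0)^{-1}$ for the two-parameter perturbation $\mathcal C_z=t\mathcal G_1+is\,\Id$ does not ``carry over verbatim'' from Remark \ref{rem:hill}. That remark (and Theorem \ref{thm:HW2016}) concerns one-complex-parameter families $w\mapsto w\mathcal G_1$ with $G_1$ real symmetric, and $t\mathcal G_1+is\,\Id$ is not of this form unless $\mathcal G_1$ is a multiple of the identity. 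Moreover, the computation you invoke (the manipulation of $\det(A)$ and $\det\begin{pmatrix}-PZ_0&QZ_1\end{pmatrix}$ following Theorem \ref{thm:HW2016}) only relates the boundary determinant $\rho$ to the Lagrangian-frame determinant; it is not where the analytic content of Hill's formula lies, so observing that it is independent of the form of $\mathcal C_z$ establishes nothing about the Fredholm determinant.

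The paper closes this gap with a factorization: whenever $\mathcal A+t\mathcal G_1$ is invertible,
\[
1+(t\mathcal G_1+is\,\Id)\mathcal A^{-1}=\bigl(1+is(\mathcal A+t\mathcal G_1)^{-1}\bigr)\bigl(1+t\mathcal G_1\mathcal A^{-1}\bigr),
\]
so multiplicativity of the Fredholm determinant together with two legitimate applications of Remark \ref{rem:hill} (to the perturbation $\tau\mathcal G_1$ of $\mathcal A$ at $\tau=t$, and to the perturbation $\sigma\Id$ of $\mathcal A+t\mathcal G_1$ at $\sigma=is$) yields $f(z)=\rho(z)\rho(t)^{-1}\cdot\rho(t)\rho(0)^{-1}=\rho(z)\rho(0)^{-1}$ for all $t$ outside a finite exceptional set (which can be arranged by the same perturbation argument as in Theorem \ref{thm:main}); continuity of both sides then extends the identity to all of $\Omega$. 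You need some such argument --- this one, or an actual re-derivation of Hill's formula for general complex, non-self-adjoint perturbations --- before the remaining steps of your proof, which are otherwise correct, can be applied.
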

	\proof
	By Proposition \ref{thm:traccia=grado} and Theorem \ref{thm:main}, we only need to show that $f(z)=\rho(z)\rho(0)^{-1}$.
	Note that if $\mathcal A+\mathcal G_1$ is invertible, we have
	\[1+(t\mathcal G_1+i s\Id) \mathcal A^{-1}=(1+is\Id (\mathcal A+t\mathcal G_1)^{-1})(1+t\mathcal G_1\mathcal A^{-1}).
	\]
	With the same perturbation method in theorem \ref{thm:main}, we can assume that $(A+tG_1)$ is invertible except for a finite number of $t$.
	By the multiplication property of Fredholm determinant and remark \ref{rem:hill},
	we have
	$f(z)=\rho(z)\rho(t)^{-1}\rho(t)\rho(0)^{-1}=\rho(z)\rho(0)^{-1}$
	on a dense subset of $\Omega$.
	Then the equation holds for all $z\in \Omega$ by the continuity of Fredholm determinant.
	\qed



 \section{Parity of the degree index and instability of periodic orbits}\label{sec:variational}

In this section, by using  Theorem~\ref{thm:main}, a Morse-type index theorem together with a characterization of the linear instability for a periodic orbit of a Hamiltonian system, we establish a sufficient condition in terms of the degree index for detecting the linear instability. 

Let $T\R^n\cong \R^n \oplus \R^n$ be the tangent space of $\R^n$ endowed with coordinates $(q,v)$. Given  $T>0$ and the Lagrangian function  $L \in \mathscr C^2([0,T]\times T\R^n, \R)$, we assume that the following two assumptions hold
\begin{itemize}
\item[{\bf (L1)\/}]  $L$ is non-degenerate with respect to $v$,  meaning that  the quadratic form  
\[
\langle \nabla_{vv} L(t,q,v) w, w\rangle \quad \textrm{ is non-degenerate } \quad \forall\,  t\in [0,T],\ \   \forall\, (q,v)\in T\R^n
\]
\item[{\bf (L2)\/}] $L$ is {\em exactly quadratic\/} in the velocities $v$ meaning that the function $L(t,q,v)$ is a polynomial of degree at most $2$ with respect to $v$.
 \end{itemize}
Under the assumption (L1)  the Legendre transform  
\begin{equation}
\mathscr L_L:[0,T] \times T\R^n \to [0,T] \times T^*\R^n, \qquad (t,q,v) \mapsto \big(t,q,D_v L(t,q,v)\big)
\end{equation}
is a $\mathscr C^1$ (local) diffeomorphism. The Fenchel transform of $L$ is the non autonomous Hamiltonian on $T^*\R^n$
\begin{equation}
H(t,q,p)\=\max_{v \in T_q M}\big(p[v]-L(t,q,v)\big)= p[v(t,q,p)] -L\big(t,q,v(t,q,p) \big)
\end{equation}
where $\big(t,q,v(t,q,p)\big)=\mathscr L_L^{-1}(t,q,p)$. 
\begin{rem}
	The assumption (L2) is in order to guarantee that the action functional is twice Frechét differentiable. It is well-known, in fact, that the smoothness assumption on the Lagrangian is in general not enough. The growth condition required in (L2) is related to the regularity of the Nemitski operators. For further details we refer to \cite{PWY19}  and references therein. 
\end{rem}
We denote by $H\=W^{1,2}([0,T], \R^n)$ be the  space of paths having Sobolev regularity $W^{1,2}$ and we define the Lagrangian action functional $\mathbb A: H\to \R$ as follows
\begin{equation}\label{eq:action}
	A(x)=\int_0^T L\big(t, x(t), x'(t)\big)\, dt.
\end{equation}
Let $Z \subset \R^n \oplus \R^n$ be a linear subspace and let us consider the linear subspace 
\[
H_Z\=\Set{x \in H| \big(x(0), x(T)\big) \in Z}.
\]
\begin{note}
In what follows we shall denote by $A_Z$ the restriction of the action $A$ onto $H_Z$; thus in symbols we have $A_Z\= A\big\vert_{H_Z}$.
\end{note}
It is well-know that critical point of the functional $A$ on the $H_Z$ are weak (in the Sobolev sense) solutions of the following boundary value problem
\begin{equation}\label{eq:bvp}
\begin{cases}
\dfrac{d}{dt}\partial_v L\big(t, x(t), x'(t)\big)= \partial_q L\big(t, x(t), x'(t)\big), \qquad t \in [0,T]\\
\big(x(0), x(T)\big) \in Z, \quad \Big(\partial_v L\big(0, x(0), x'(0)\big), -\partial_v L\big(T, x(T), x'(T)\big)\Big)\in Z^\perp
\end{cases}
\end{equation}
where $Z^\perp$ denotes the orthogonal complement of $Z$ in $T^*\R^n$  
and up to standard elliptic regularity arguments, classical (i.e. smooth) solutions. 
\begin{rem}
We observe, in fact, that there is an identification of $Z\times Z^\perp$ and the  conormal subspace of $Z$, namely $N^*(Z)$ in $T^*\R^n$. For further details, we refer the interested reader to \cite{APS08}.	
\end{rem}
We assume that $x \in H_Z$ is a classical solution of the boundary value problem given in Equation~\eqref{eq:bvp}.  We observe that,  by assumption (L2) the functional $A$ is twice Fréchet differentiable on $H$. Being the evaluation map from  $H$ onto $H_Z$ a smooth submersion (cfr. \cite{Kli83} for further details), also the restriction $A_Z$ is twice  Fréchet differentiable and by this  we get that $d^2A_Z(x)$ coincides with $D^2A_Z(x)$.

By computing the second variation of $A_Z$  at $x$, we get the  {\em index form\/} $I$ defined by
\begin{equation}
d^2A_Z(x)[\xi, \eta]\=I[\xi,\eta] = \int_0^T \big[\langle P(t)\xi'+Q(t) \xi, \eta'\rangle+ \langle \trasp{Q}(t)\xi', \eta \rangle + \langle R(t) \xi, \eta\rangle \big] \, dt, \qquad \forall\, \xi,\eta \in H_Z
\end{equation}
\begin{multline}
\textrm{ where }\  P(t)\=\partial_{vv}L\big(t,x(t),x'(t)\big), \quad  Q(t)\=\partial_{qv}L\big(t,x(t),x'(t)\big)\\ \textrm{ and finally }  R(t)\=\partial_{qq}L\big(t,x(t),x'(t)\big).
\end{multline}
Now, by linearizing the ODE  given in Equation \eqref{eq:bvp} at $x$, we finally get  the (linear) Morse-Sturm boundary value problem defined as follows
\begin{equation}\label{eq:MS-system}
\begin{cases}
	-\dfrac{d}{dt}\big[P(t)u'+ Q(t) u\big] + \trasp{Q}(t) u'+R(t)u=0, \qquad t \in [0,T]\\
	\big(u(0), u(T)\big) \in Z, \quad \Big(Pu'(0)+Q(0)u(0),-\big[P(T)u'(T)+Q(T) u(T)\big]\Big)\in Z^\perp
	\end{cases}
\end{equation}
We observe that $u$ is a weak (in the Sobolev sense)  solution of the boundary value problem given in  Equation \eqref{eq:MS-system} if and only if $u \in \ker I$. Moreover,  by elliptic bootstrap it follows that $u$ is a smooth solution.  

Let us now consider the {\em standard symplectic space\/} $T^*\R^n\cong \R^n \oplus \R^n$ endowed with the {\em canonical symplectic form\/}
\begin{equation}\label{eq:standard-form}
	\omega_0\big((p_1,q_1),(p_2,q_2)\big)\= \langle p_1, q_2 \rangle - \langle q_1, p_2 \rangle. 
\end{equation}
Denoting by $J_0$ the {\em (standard) complex structure\/}  namely the automorphism $J_0:T^*\R^n\to T^*\R^n$  defined  by  $  J_0(p,q)=(-q, p) $   and whose associated matrix is  given by 
\begin{equation}\label{eq:J0}
J_0= \begin{pmatrix}
  0 & -\Id\\
  \Id & 0  
 \end{pmatrix} 
\end{equation}
it immediately follows that $\omega_0 (z_1,z_2)\= \langle J_0 z_1, z_2\rangle$ for all $z_1,z_2 \in T^*\R^n $.
\begin{note}
In what follows, $T^*\R^n$ is endowed with a coordinate system  $z=(p,q)$, where $p=(p_1, \dots, p_n)\in \R^n$ and $q=(q_1, \dots, q_n)\in \R^n$. we shall refer to $q$ as {\em configuration variables\/} and to $p$ as the {\em momentum variables\/}. 
\end{note}
By setting $z(t)\= \trasp{\big(P(t)u'(t)+Q(t)u(t), u(t)\big)}$,  the Morse-Sturm equation reduces to the following (first order) Hamiltonian system in the standard symplectic space
\begin{multline}\label{eq:hamsys-finale}
z'(t)=J_0 B(t)\, z(t), \qquad t \in [0,T] \quad \textrm{ where }\\
B(t)\=\begin{bmatrix}
	P^{-1}(t) & -P^{-1}(t) Q(t)\\
	-\trasp{Q}(t)P^{-1}(t) & \trasp{Q}(t) P^{-1}(t) Q(t) -R(t)
\end{bmatrix}
\end{multline}
We now define the {\em double standard symplectic space\/} $(\R^{2n}\oplus \R^{2n}, -\omega_0 \oplus \omega_0)$ and we introduce the matrix  $\widetilde J_0\=\diag(-J_0,J_0)$ where $\diag(*,*)$ denotes the $2 \times 2$ diagonal block matrix. In this way,  the subspace $L_Z$ given  by 
\begin{equation}\label{eq:utile-p-focali}
L_Z\= \widetilde J_0(Z^\perp\oplus Z)
\end{equation}
is thus Lagrangian.  
\begin{note}
The following notation will be used throughout the paper. 
If $x$ is a solution of \eqref{eq:bvp} we denote by  $z_x$ the corresponding function defined by 
\begin{equation}\label{eq:z-x}
\big(t, z_x(t)\big)= \mathscr L_L\big(t,x(t),x'(t)\big).
\end{equation} 
\end{note}

Let us now consider the path $s \mapsto \mathcal L_s$ of unbounded Hamiltonian operators that are self-adjoint in $L^2$ and defined on the domain $D(T,L)$:
\[
\mathcal L_s\=-J_0 \dfrac{d}{dt}- B_s(t)
\]
where $s \mapsto B_s(t)$ is a $\mathscr C^1$ path  of symmetric matrices such that $B_0(t)=0_{2n}$ and $B_1(t)=B(t)$.
\begin{lem}{\bf (Spectral flow formula)\/}\label{thm:spfl}
Under the above notation, the following equality holds
\begin{equation}
	-\spfl\left(\mathcal L_s, s \in [0,1]\right)= \iCLM(L, \Graph \psi(t), t \in [0,T])
\end{equation}
where $\psi$ denotes the solution of 
\[
\begin{cases}
	\dfrac{d}{dt}\psi(t)= J_0\, B(t) \psi(t), \qquad t \in [0,T]\\
	\psi(0)=\Id_{2n}.
\end{cases}
\]
\end{lem}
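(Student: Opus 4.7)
The plan is to compute both sides via crossing forms and match them up to sign, then to invoke homotopy invariance of the CLM index in a two-parameter deformation.  After the usual perturbation argument, using homotopy invariance of spectral flow and CLM index together with density of generic data, I would first assume that the family $s\mapsto\mathcal L_s$ has only regular crossings and that the Lagrangian paths under consideration have only regular CLM crossings.

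\textbf{Identification of crossings.} At any $s_0\in(0,1)$ with $\ker \mathcal L_{s_0}\neq\{0\}$, an element $u\in\ker\mathcal L_{s_0}$ solves $u'=J_0 B_{s_0} u$ with $(u(0),u(T))\in L$, so the evaluation map $u\mapsto u(0)$ identifies $\ker\mathcal L_{s_0}$ with the intersection $L\cap\Graph\psi_{s_0}(T)$ inside the doubled symplectic space $(\R^{2n}\oplus\R^{2n},-\omega_0\oplus\omega_0)$. In particular the regular crossings of $s\mapsto\mathcal L_s$ coincide (as values of the parameter) with those of the Lagrangian path $s\mapsto\Graph\psi_s(T)$ against $L$.

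\textbf{Matching of crossing forms.} At a crossing $s_0$ the spectral flow crossing form on $u\in\ker\mathcal L_{s_0}$ is
\[
\Gamma^{\mathrm{sf}}_{s_0}[u]\;=\;\langle \partial_s\mathcal L_{s_0} u,u\rangle_{L^2}\;=\;-\int_0^T\langle \partial_s B_{s_0}(t)u(t),u(t)\rangle\,dt.
\]
Differentiating the defining ODE of $\psi_s$ in $s$ at $s_0$ yields the variation formula
\[
\partial_s\psi_{s_0}(T)\;=\;\psi_{s_0}(T)\int_0^T\psi_{s_0}(t)^{-1}J_0\,\partial_s B_{s_0}(t)\,\psi_{s_0}(t)\,dt,
\]
and inserting this into the definition of the CLM crossing form of $s\mapsto\Graph\psi_s(T)$ against $L$, together with the symplectic identity $\trasp{\psi_{s_0}(t)}J_0\,\psi_{s_0}(t)\equiv J_0$, produces exactly $-\Gamma^{\mathrm{sf}}_{s_0}[u]$. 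Summing signatures of these crossing forms over all $s_0$ then gives
\[
\spfl(\mathcal L_s,\,s\in[0,1])\;=\;-\,\iCLM\bigl(L,\Graph\psi_s(T),\,s\in[0,1]\bigr).
\]

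\textbf{Two-parameter homotopy.} I would then consider the family $(s,t)\mapsto\Graph\psi_s(t)$ on the rectangle $[0,1]\times[0,T]$. Because $B_0\equiv0$ one has $\psi_0(t)\equiv\Id$, so the bottom side $t\mapsto\Graph\psi_0(t)$ is the constant graph $\Graph\Id$ and has vanishing CLM index; similarly $\psi_s(0)=\Id$ makes the left side $s\mapsto\Graph\psi_s(0)$ constant with vanishing CLM index. Homotopy invariance of the CLM index with fixed endpoints, applied to the two concatenations (bottom-then-right versus left-then-top), which share the endpoints $\Graph\Id$ and $\Graph\psi(T)$ and are related by the two-parameter homotopy, produces
\[
\iCLM(L,\Graph\psi_s(T),\,s\in[0,1])\;=\;\iCLM(L,\Graph\psi(t),\,t\in[0,T]).
\]
Combining with the preceding identity delivers the lemma. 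The chief obstacle I expect is the sign-and-convention bookkeeping in the crossing form step: aligning the doubled symplectic form $-\omega_0\oplus\omega_0$ with the $L^2$ pairing on the operator side, so that the CLM crossing form for $s\mapsto\Graph\psi_s(T)$ collapses — via $\trasp{\psi}J_0\,\psi\equiv J_0$ — to the integral of $\langle\partial_s B_{s_0}u,u\rangle$ over $[0,T]$ with precisely the correct sign.
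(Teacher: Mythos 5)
Your proposal is correct and follows essentially the same route the paper takes: the paper's proof is a citation to [HS09], and the remark immediately after the lemma sketches precisely your strategy (perturb to regular crossings, identify crossing instants and kernels of $\mathcal L_{s_0}$ with $L\cap\Graph\psi_{s_0}(T)$, compare crossing forms, then conclude by homotopy invariance over the rectangle $[0,1]\times[0,T]$). The only point to make explicit is the endpoint bookkeeping: since the spectral-flow formula weights the ends by $-\mathrm{n_-}$ at $s=0$ and $+\mathrm{n_+}$ at $s=1$ while the CLM formula uses $+\mathrm{n_+}$ and $-\mathrm{n_-}$, the sign reversal of the crossing forms makes these endpoint terms match as well, so your "summing signatures" step extends correctly to possibly degenerate endpoints.
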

\proof 
 For the proof of this result, we refer the interested reader to \cite[Theorem 2.5, Equation (2.7) \& Equation (2.19)]{HS09}.
\qed
  \begin{rem}
   The basic idea behind the proof of Proposition \ref{thm:spfl} is to perturb the path $s\mapsto \mathcal L_s$ in order to get regular crossing and without changing the spectral flow (as consequence of the fixed endpoints homotopy invariance). Once this has been done, for concluding, it is enough to prove that the local contribution at each crossing instant to the spectral flow is the opposite of the local contribution to the Maslov index. This can be achieved by comparing the crossing forms as  in \cite[Lemma 2.4]{HS09} and to prove that the crossing instants for the path $s\mapsto \mathcal L_s$  are the same as the crossing instants of the path   $s\mapsto \mathcal \Graph \psi_s$ and at each crossing $s_0$ the kernel dimension of the operator $\mathcal L_{s_0}$ is equal to the $\dim(L\cap \Graph \psi_{s_0})$. The conclusion follows once again by using the homotopy properties of the $\iCLM$-index and the spectral flow. 
 \end{rem}

\begin{defn}
Let $x$ be a critical point of $A$. We denote by $\iMorse{ Z}(x)$ the {\em spectral index\/} of $x$ namely 
\[
\iMorse{Z}(x)\=-\spfl(\mathcal L_s, s\in [0,1]).
\]
Let $z_x$ be defined in  Equation \eqref{eq:z-x}. We define the {\em Maslov index of $z_x$\/} as the integer given by 
\begin{equation}
	\iMas{L_Z}(z_x)\=\iCLM\big(L_Z, \Graph \psi(t), t \in [0,T]\big)
\end{equation}
where $\psi$ denotes the fundamental solution of the Hamiltonian system given in Equation \eqref{eq:hamsys-finale}.
\end{defn}
\begin{prop}\label{thm:index}
	Under the previous notation, if $x$ is a critical point of $A_Z$, then  $\iMorse{Z}(x)$ is well-defined. Moreover the following equality holds
	\begin{equation}
		\iMorse{Z}(x)=\iMas{L_Z}(z_x).
	\end{equation}
\end{prop}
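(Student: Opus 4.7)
The plan is to reduce the statement directly to the spectral flow formula of Lemma \ref{thm:spfl}, once we verify that both sides are defined and that the Lagrangian subspace that enters the Maslov index is exactly $L_Z$.

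First I would check that $\iMorse{Z}(x)$ is well-defined, i.e. that $s\mapsto \mathcal L_s$ is a $\mathscr C^1$-path of self-adjoint Fredholm operators on $L^2([0,T],\R^{2n})$ with a common dense domain. The natural domain associated with the pair $(T,L_Z)$ consists of $H^1$-paths $z$ satisfying $(z(0),z(T))\in L_Z$; because $L_Z=\widetilde J_0(Z^\perp\oplus Z)$ is Lagrangian in the double symplectic space $(\R^{2n}\oplus\R^{2n},-\omega_0\oplus\omega_0)$, standard symplectic boundary theory (as used e.g.\ in \cite{APS08,HS09}) guarantees that the unperturbed operator $\mathcal L_0=-J_0\,d/dt$ is self-adjoint with compact resolvent on this domain. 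Since $s\mapsto B_s(t)$ is a $\mathscr C^1$ path of bounded symmetric multiplication operators, $\mathcal L_s=\mathcal L_0-B_s$ is a bounded self-adjoint perturbation for every $s$ and inherits the compact resolvent property by Lemma~\ref{thm:compact-resolvent-perturbation}, hence the spectral flow $\spfl(\mathcal L_s,s\in[0,1])$ and in particular $\iMorse{Z}(x)$ is well-defined.

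Next I would verify that the boundary conditions in the Morse--Sturm system \eqref{eq:MS-system} translate into the graph condition for the fundamental solution $\psi$ with respect to $L_Z$. Writing $z(t)=\trasp{(P(t)u'(t)+Q(t)u(t),u(t))}$ as in Equation \eqref{eq:hamsys-finale}, the boundary conditions $(u(0),u(T))\in Z$ and $(Pu'(0)+Qu(0),-(Pu'(T)+Qu(T)))\in Z^\perp$ are precisely $(z(0),z(T))\in L_Z$ because of the identity $L_Z=\widetilde J_0(Z^\perp\oplus Z)$ and the definition of $\widetilde J_0$. Therefore $\ker\mathcal L_1$ coincides with the space of smooth $z_x$ with $(z_x(0),z_x(T))\in L_Z$, i.e.\ with $L_Z\cap \Graph\psi(T)$; more generally, the intersections $L_Z\cap\Graph\psi_s(t)$ detect the kernels that produce the spectral flow.

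At this point the proof is essentially finished: apply Lemma~\ref{thm:spfl} with the Lagrangian $L$ taken to be $L_Z$ to obtain
\begin{equation}
-\spfl(\mathcal L_s,s\in [0,1])=\iCLM\bigl(L_Z,\Graph \psi(t),t\in [0,T]\bigr),
\end{equation}
where $\psi$ is the fundamental solution of \eqref{eq:hamsys-finale}. The left-hand side is $\iMorse{Z}(x)$ by definition, while the right-hand side is $\iMas{L_Z}(z_x)$ by definition, yielding the claimed equality.

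The main obstacle, and the only non-cosmetic point, is the first paragraph: establishing self-adjointness and Fredholmness of $\mathcal L_s$ on the correct domain and, in particular, verifying that the domain prescribed by the Lagrangian $L_Z$ is indeed the one on which $\mathcal L_0$ becomes self-adjoint. Everything else is a bookkeeping exercise in matching the boundary conditions of the Morse--Sturm system with the graph condition for $\psi$, so that Lemma \ref{thm:spfl} can be invoked without modification.
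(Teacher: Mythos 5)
Your proposal is correct and follows essentially the same route as the paper, which simply cites \cite[Theorem 2.5]{HS09} — the same source underlying Lemma~\ref{thm:spfl} — so your reduction of the proposition to that lemma with $L=L_Z$ is exactly the intended argument. The extra work you do (self-adjointness and Fredholmness of $\mathcal L_s$ on the domain cut out by $L_Z$, and the matching of the Morse–Sturm boundary conditions with the graph condition) is precisely the routine verification the paper leaves implicit.
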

\proof 
For the proof of this result we refer the reader to  \cite[Theorem 2.5]{HS09}.
\qed

We close this section with the following characterization of the linear instability of a periodic orbit through the parity of the  degree index. 
\begin{thm}
Let $x$ be a $T$-periodic solution of Equation~\eqref{eq:bvp}.  If one of the following two alternatives hold
\begin{itemize}
\item[]{\bf (OR)} $x$   is  orientation preserving and $\icon(x) +n$   is odd
\item[]{\bf (NOR)} $x$   is non orientation preserving and $\icon(x) +n$  is even
\end{itemize}
then $x$ is linearly unstable.
\end{thm}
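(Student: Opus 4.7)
\noindent\emph{Proof plan.} The plan is to argue by contradiction: suppose $x$ is linearly stable, and show this rules out both alternatives (OR) and (NOR). The strategy is to translate $\icon(x)$ into a Maslov-type index for the monodromy $M\=\psi(T) \in \Sp(2n)$ via the results of the preceding sections, and then invoke a classical parity formula for the Maslov index under periodic (diagonal) boundary conditions.

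\medskip

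\noindent For the $T$-periodic boundary condition, $Z$ is the diagonal in $\R^n \oplus \R^n$ and $L_Z$ becomes the graph of the identity in the double standard symplectic space. Combining Proposition~\ref{thm:index} with Theorem~\ref{thm:main} and Definition~\ref{def:PW-index}, the degree index $\icon(x)$ and the Maslov-type index $\iMas{L_Z}(z_x)$ agree up to a global sign, so the parities of $\icon(x)+n$ and of $\iMas{L_Z}(z_x)+n$ coincide. On the other hand, linear stability of $x$ forces $M\in \Sp(2n)$ to be semisimple with spectrum on the unit circle $S^1$; since the hypothesis $\ker(\Id-M)=\{0\}$ comes for free from the non-degeneracy needed to define $\icon(x)$, the eigenvalues of $M$ split into conjugate pairs $e^{\pm i\theta_k}$ with $\theta_k\in(0,\pi)$, plus possibly an eigenvalue $-1$ of even algebraic multiplicity. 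A direct computation then yields
\[
\det(\Id-M) \;=\; \prod_k \bigl|1-e^{i\theta_k}\bigr|^{2}\cdot 2^{2m}\;>\;0,
\]
so that $x$ is automatically orientation preserving.

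\medskip

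\noindent The decisive classical ingredient is the parity formula
\[
(-1)^{\,n+\iMas{L_Z}(z_x)}\;=\;\sgn\det(\Id-M),
\]
valid whenever $\Id-M$ is invertible. This formula belongs to the Krein--Bott--Long--Hu--Sun circle of ideas; for the $\iCLM$-convention adopted in this paper the relevant reference is \cite{HS09}. Combined with the previous computation it shows that $\iMas{L_Z}(z_x)+n$, and hence $\icon(x)+n$, is \emph{even}.

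\medskip

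\noindent Putting everything together: hypothesis (OR) requires $\icon(x)+n$ to be odd, contradicting the conclusion just obtained; hypothesis (NOR) requires $x$ to be non-orientation-preserving, contradicting $\det(\Id-M)>0$. In either case linear stability is impossible, so $x$ must be linearly unstable. The only non-routine input is the parity formula above; although classical, matching the precise sign convention of $\iMas{L_Z}$ used in this paper (through $\iCLM$ and Proposition~\ref{thm:index}) to the one implicit in the parity formula is the main technical obstacle. The remaining steps only repackage material already established in Sections~\ref{sec:HPW-index} and~\ref{sec:variational}.
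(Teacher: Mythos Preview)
Your argument is correct and follows the same overall arc as the paper: both reduce the parity of $\icon(x)+n$ to that of $\iCLM(\Delta,\Graph\psi;[0,T])+n$ via Theorem~\ref{thm:main} and Proposition~\ref{thm:index}. One small omission: Theorem~\ref{thm:main} concerns the second-order path $\mathcal A_s$, not the first-order path $\mathcal L_s$ appearing in Proposition~\ref{thm:index}; the paper bridges this with the identification $\spfl(\mathcal L_s)=\spfl(\mathcal A_s)$ from \cite[Equation~(3.18)]{HS09}, which you should cite explicitly rather than absorb into ``repackaging''.

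The genuine difference is in the final step. The paper's proof is a one-line appeal to \cite[Theorem~1]{PWY19} as a black box. You instead unpack what that theorem says: arguing by contradiction, linear stability together with $1\notin\spec(M)$ forces $\det(\Id-M)>0$ (hence $x$ is orientation preserving, killing (NOR) immediately), and the parity formula $(-1)^{n+\iCLM}=\sgn\det(\Id-M)$ then forces $\iCLM+n$ even (killing (OR)). This is exactly the mechanism behind \cite[Theorem~1]{PWY19}, and your version has the virtue of explaining \emph{why} the parity criterion detects instability rather than merely invoking it. Your candid caveat about matching the sign convention of $\iCLM$ to the parity formula is well placed --- that bookkeeping is precisely what \cite{PWY19} and \cite{HS09} carry out, and is the only spot where your sketch stops short of a complete proof.
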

\proof
We prove the theorem in the case (OR) being the other completely analogous.  Let $x$ be a $T$-periodic orientation preserving solutions and let $z_x$ be the solution of the corresponding Hamiltonian system defined in Equation~\eqref{eq:z-x}. 

By Proposition~\ref{thm:index}, we get that 
\[
\iCLM(\Delta, \Graph \psi(t), t \in [0,T])= -\spfl(\mathcal L_s, s\in [0,1]).
\]
By  \cite[Equation (3.18)]{HS09}, we get that 
$\spfl(\mathcal L_s, s\in [0,1])=\spfl(\mathcal A_s, s\in [0,1])$ where $s\mapsto\mathcal A_s$ is the path of second order self-adjoint Fredholm operators on $L^2$ with dense domain 
\[
E^2([0,T], \R^n)=\Set{u \in H^2([0,T], \R^n)|u(0)=u(T) \textrm{ and } u'(0)=u'(T)}.
\]
By invoking Theorem \ref{thm:main}, then we get
\begin{multline}
\iCLM(\Delta, \Graph \psi(t), t \in [0,T]) +n= -\spfl(\mathcal L_s, s\in [0,1])+n\\=-\spfl(\mathcal A_s, s\in [0,1])+n= -\icon(\psi, R)+n.
\end{multline}
Now since the parity of $-\icon(\psi, R)+n$ and  $\icon(\psi, R)+n$ both coincides, the result directly follows by  \cite[Theorem 1]{PWY19}. This concludes the proof.

\qed

\appendix



\section{A recap on the Maslov index and spectral flow}\label{sec:sf}

In this section, we provide the basic definition and properties on the Maslov index and the spectral flow. Our basic references are 
\cite{BJP14a, BJP14b, KOP19, PWY19} and references therein.


\subsection{On the Maslov index}\label{subsec:Maslovindexpath}

Given a $2n$-dimensional (real) symplectic space $(V,\omega)$, a {\em 
Lagrangian 
subspace\/} of $V$ is an $n$-dimensional subspace $L \subset V$ such that $L = 
L^\omega$ where $L^\omega$ denotes the {\em symplectic orthogonal\/}, i.e. the 
orthogonal 
with respect to the symplectic structure. 
We denote by $ \Lagr= \Lagr(V,\omega)$ the {\em Lagrangian Grassmannian of 
$(V,\omega)$\/}, namely the set of all Lagrangian subspaces of $(V, \omega)$
\[
\Lagr(V,\omega)\=\Set{L \subset V| L= L^{\omega}}.
\]
It is well-known that $\Lagr(V,\omega)$ is a manifold. For each $L_0 \in \Lagr$, 
let 
\[
\Lagr^k(L_0) \= \Set{L \in \Lagr(V,\omega) | \dim\big(L \cap L_0\big) =k } 
\qquad k=0,\dots,n.
\]
Each $\Lagr^k(L_0)$ is a real compact, connected submanifold of codimension 
$k(k+1)/2$. The topological closure 
of $\Lagr^1(L_0)$  is the {\em Maslov cycle\/} that can be also described as 
follows
\[
 \Sigma(L_0)\= \bigcup_{k=1}^n \Lagr^k(L_0)
\]
The top-stratum $\Lagr^1(L_0)$ is co-oriented meaning that it has a 
transverse orientation. To be 
more precise, for each $L \in \Lagr^1(L_0)$, the path of Lagrangian subspaces 
$(-\delta, \delta) \mapsto e^{tJ} L$ cross $\Lagr^1(L_0)$ transversally, and as 
$t$ increases the path points to the transverse direction. Thus  the Maslov cycle is two-sidedly embedded in 
$\Lagr(V,\omega)$. Based on the topological properties of the Lagrangian 
Grassmannian manifold, 
it is possible to define a fixed endpoints homotopy invariant called {\em Maslov 
index\/}.

\begin{defn}\label{def:Maslov-index}
Let $L_0 \in \Lagr(V,\omega)$ and let $\ell:[0,1] \to \Lagr(V, \omega)$ be a 
continuous path. We 
define the {\em Maslov index\/} $\iCLM$ as follows:
\[
 \iCLM(L_0, \ell(t); t \in[a,b])\= \left[e^{-\varepsilon J}\, \ell(t): 
\Sigma(L_0)\right]
\]
where the right hand-side denotes the intersection number and $0 < \varepsilon 
<<1$.
\end{defn}
For further reference we refer the interested reader to \cite{CLM94} and references therein. 
\begin{rem}
 It is worth noticing that for $\varepsilon>0$ small enough, the Lagrangian 
subspaces 
 $e^{-\varepsilon J} \ell(a)$ and $e^{-\varepsilon J} \ell(b)$ are off the 
singular cycle. 
\end{rem}
One efficient way to compute the Maslov index, was introduced by authors in 
\cite{RS93} via 
crossing forms. Let $\ell$ be a $\mathscr C^1$-curve of Lagrangian subspaces 
such that 
$\ell(0)= L$ and let $W$ be a fixed Lagrangian subspace transversal to $L$. For 
$v \in L$ and 
small enough $t$, let $w(t) \in W$ be such that $v+w(t) \in \ell(t)$.  Then the 
form 
\[
 Q(v)= \dfrac{d}{dt}\Big\vert_{t=0} \omega \big(v, w(t)\big)
\]
is independent on the choice of $W$. A {\em crossing instant\/} for $\ell$ is an 
instant $t \in [a,b]$ 
such that $\ell(t)$ intersects $W$ nontrivially. At each crossing instant, we 
define the 
crossing form as 
\[
 \Gamma\big(\ell(t), W, t \big)= Q|_{\ell(t)\cap W}.
\]
A crossing is termed {\em regular\/} if the crossing form is non-degenerate. If 
$\ell$ is regular meaning that 
it has only regular crossings, then the Maslov index is equal to 
\begin{equation}\label{eq:iclm-crossings}
 \iCLM\big(W, \ell(t); t \in [a,b]\big) = \coiMor\big(\Gamma(\ell(a), W; a)\big)+ 
\sum_{a<t<b} 
 \sgn\big(\Gamma(\ell(t), W; t\big)- \iMor\big(\Gamma(\ell(b), W; b\big)
\end{equation}
where the summation runs over all crossings $t \in (a,b)$ and $\coiMor, \iMor$ 
are the dimensions  of 
the positive and negative spectral spaces, respectively and $\sgn\= 
\coiMor-\iMor$ is the  signature. 
(We refer the interested reader to \cite{LZ00} and \cite[Equation (2.15)]{HS09}). 
We close this section by 
recalling some useful 
properties of the Maslov index. \\
\begin{itemize}
\item[]{\bf Property I (Reparametrization invariance)\/}. Let $\psi:[a,b] \to 
[c,d]$ be a 
continuous and piecewise smooth function with $\psi(a)=c$ and $\psi(b)=d$, then 
\[
 \iCLM\big(W, \ell(t)\big)= \iCLM(W, \ell(\psi(t))\big). 
\]
\item[] {\bf Property II (Homotopy invariance with respect to the ends)\/}. For 
any $s \in [0,1]$, 
let $s\mapsto \ell(s,\cdot)$ be a continuous family of Lagrangian paths 
parametrised on $[a,b]$ and 
such that $\dim\big(\ell(s,a)\cap W\big)$ and $\dim\big(\ell(s,b)\cap W\big)$ 
are constants, then 
\[
 \iCLM\big(W, \ell(0,t);t \in [a,b]\big)=\iCLM\big(W, \ell(1,t); t \in 
[a,b]\big).
\]
\item[]{\bf Property III (Path additivity)\/}. If $a<c<b$, then
\[
 \iCLM\big(W, \ell(t);t \in [a,b]\big)=\iCLM\big(W, \ell(t); t \in [a,c]\big)+
 \iCLM\big(W, \ell(t); t \in [c,b]\big) 
\]
\item[]{\bf Property IV (Symplectic invariance)\/}. Let $\Phi:[a,b] \to \Sp(2n, 
\R)$. Then 
\[
 \iCLM\big(W, \ell(t);t \in [a,b]\big)= \iCLM\big(\Phi(t)W, \Phi(t)\ell(t); t 
\in [a,b]\big).
\]
\end{itemize}

%
%
%
%

In the standard symplectic space $(\R^{2n}, \omega)$ we denote by $J$ the standard symplectic matrix defined by $J=\begin{bmatrix} 0&-\Id\\ \Id &0\end{bmatrix}$. The symplectic form $\omega$ can be  represented with respect to the Euclidean product $\langle\cdot, \cdot\rangle$ by $J$ as follows $\omega(z_1,z_2)=\langle J z_1,z_2\rangle$ for every $z_1, z_2 \in \R^{2n}$.  We consider the 1-codimensional (algebraic) subvariety 
\[
\Sp(2n, \R)^{0}\=\{M\in \Sp(2n, \R)| \det(M-\Id)=0\} \subset \Sp(2n, \R)
\] 
and let us define
\[
\Sp(2n,\R)^{*}=\Sp(2n,\R)\backslash \Sp(2n, \R)^{0}=\Sp(2n,
\R)^{+}\cup
\Sp(2n, \R)^{-}
\]
where 
\begin{multline}
\Sp(2n,\R)^+\=\{M\in \Sp(2n,\R)| \det(M-\Id)>0\} \quad \textrm{ and }\\
\Sp(2n,\R)^- \=\{M\in \Sp(2n,\R)
|\det(M-\Id)<0\}.
\end{multline}

For any $M\in \Sp(2n,\R)^{0}$, $\Sp(2n,\R)^{0}$ is
co-oriented at the point $M$
 by choosing  as positive direction the direction determined by
 $\frac{d}{dt}Me^{tJ}|_{t=0}$ with $t\geq0$ sufficiently small.  We recall that $\Sp(2n,\R)^{+}$ and
$\Sp(2n,\R)^{-}$ are two path connected
 components of $\Sp(2n,\R)^{*}$
  which are simple connected in $\Sp(2n,\R)$. (For the proof of these facts we refer, for instance,  the interested reader to \cite[pag.58-59]{Lon02} and references therein).  Following authors in \cite[Definition 2.1]{LZ00} we start by recalling the following definition.
 \begin{defn}\label{def:Maslov-index-ok}
Let $\psi:[a,b]\rightarrow\Sp(2n,\R)$ be a continuous path. Then there exists an $\varepsilon>0$ such that for every $\theta\in[-\varepsilon,\varepsilon]\setminus\{0\}$, the matrices $\psi(a)e^{J\theta}$ and $\psi(b)e^{J\theta}$  lying both out of $\Sp(2n,\R)^0$ . We define the {\em $\iomega{1}$-index\/} or the {\em Maslov-type index\/} as follows
\begin{equation}
\iomega{1}(\psi)\=[e^{-J\varepsilon}\psi:\Sp(2n,\R)^{0}]
\end{equation}
where the (RHS) denotes the intersection number between the perturbed path $t\mapsto e^{-J\varepsilon} \psi(t)$ with the singular cycle $\Sp(2n,\R)^0$. 
\end{defn}
Through the parity of the $\iomega{1}$-index it is possible to locate  endpoints of the perturbed symplectic path  $t\mapsto e^{-J\varepsilon} \psi(t)$.  
 \begin{lem}\label{lem:parity property}{\bf (\cite[Lemma 5.2.6]{Lon02})\/}
 Let $\psi:[a,b] \to \Sp(2n, \R)$ be a continuous path.  The  following characterization holds
 \begin{itemize}
 \item[] $\iomega{1}(\psi)$ is even $\iff$ both the endpoints
 $e^{-\varepsilon J}\psi(a)$ and $e^{-\varepsilon J}\psi(b)$ lie in $\Sp(2n, \R)^+$ or
in $\Sp(2n, \R)^-$.
 \end{itemize}
 \end{lem}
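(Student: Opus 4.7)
The plan hinges on the fact that $\Sp(2n,\R)^0$ is a co-oriented codimension-one subvariety whose complement $\Sp(2n,\R)^*$ has exactly two path-connected components $\Sp(2n,\R)^+$ and $\Sp(2n,\R)^-$, both simply connected in $\Sp(2n,\R)$, as already recalled in the paragraph preceding Definition \ref{def:Maslov-index-ok}. Setting $\widetilde\psi(t)\=e^{-\varepsilon J}\psi(t)$, by the choice of $\varepsilon$ in Definition \ref{def:Maslov-index-ok} the two endpoints $\widetilde\psi(a)$ and $\widetilde\psi(b)$ lie off the singular cycle, hence each of them belongs to exactly one of these two components.

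First I would reduce to a transverse representative: using the homotopy invariance of the intersection number together with a standard transversality argument, I would deform $\widetilde\psi$ through a fixed-endpoint homotopy (remaining close enough that the endpoints do not leave their respective components of $\Sp(2n,\R)^*$) to a path that meets $\Sp(2n,\R)^0$ only at a finite set $\{t_1<\dots<t_k\}\subset(a,b)$ and crosses the smooth top stratum of $\Sp(2n,\R)^0$ transversally at each $t_j$. Such a deformation leaves both the intersection number $\iomega{1}(\psi)$ and the component memberships of the two endpoints unchanged, and yields $\iomega{1}(\psi)=\sum_{j=1}^{k}\epsilon_j$, where $\epsilon_j\in\{+1,-1\}$ is the local intersection sign dictated by the co-orientation of $\Sp(2n,\R)^0$.

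Next, at each transverse crossing $t_j$ the perturbed path locally moves from one side of $\Sp(2n,\R)^0$ to the other; since the two local sides globally exhaust $\Sp(2n,\R)^+$ and $\Sp(2n,\R)^-$, the path $\widetilde\psi$ switches between $\Sp(2n,\R)^+$ and $\Sp(2n,\R)^-$ at every $t_j$. Consequently $\widetilde\psi(a)$ and $\widetilde\psi(b)$ lie in the same component of $\Sp(2n,\R)^*$ if and only if the total number $k$ of crossings is even. Because $\iomega{1}(\psi)=\sum_{j=1}^{k}\epsilon_j\equiv k\pmod{2}$, this happens precisely when $\iomega{1}(\psi)$ is even, which is the stated characterization.

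The main obstacle will be carrying out the transversality reduction cleanly in view of the fact that $\Sp(2n,\R)^0=\{M\in\Sp(2n,\R)\mid \det(M-\Id)=0\}$ is an algebraic subvariety that may carry singular strata of positive codimension. To handle this one may appeal to Long's explicit stratification of $\Sp(2n,\R)^0$ in \cite{Lon02} to arrange that the perturbed path avoids all strata of codimension greater than one and meets the smooth top stratum transversely; alternatively, one may pass first to the mod-$2$ intersection number and use the simple connectedness of $\Sp(2n,\R)^\pm$ inside $\Sp(2n,\R)$ to conclude that the parity of $\iomega{1}(\psi)$ depends only on which components of $\Sp(2n,\R)^*$ contain the endpoints.
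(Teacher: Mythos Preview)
The paper does not give its own proof of this lemma; it simply records the statement and cites \cite[Lemma~5.2.6]{Lon02}. Your argument is correct and is essentially the standard parity argument underlying such results: after a fixed-endpoint perturbation to transversality, the signed intersection number with the co-oriented hypersurface $\Sp(2n,\R)^0$ has the same parity as the total number $k$ of crossings, and each transverse crossing switches the component of $\Sp(2n,\R)^*$ containing the path, so the endpoints lie in the same component of $\Sp(2n,\R)^*$ if and only if $k$ (equivalently $\iomega{1}(\psi)$) is even. Your identification of the main technical point---controlling the singular strata of the algebraic set $\Sp(2n,\R)^0$ so that a generic perturbation meets only the smooth top stratum---is accurate, and both remedies you suggest (Long's explicit stratification in \cite{Lon02}, or passing to the mod-$2$ intersection number and using the simple connectedness of $\Sp(2n,\R)^\pm$ inside $\Sp(2n,\R)$) are adequate to close that gap.
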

We close this section with a rather technical  result which will be used in the proof of the main instability criterion. 
\begin{lem}{\bf (\cite[Lemma 3.2]{HS10}.
)\/}\label{prop:how to know in which component}
 Let $\psi:[a,b] \to \Sp(2n, \R)$ be a continuous symplectic path such that $\psi(0)$ is linearly stable.
 \begin{enumerate}
  \item  If $1 \notin \sigma\big(\psi(a)\big)$ then there exists $\varepsilon >0$
sufficiently small such that
  $\psi(s) \in \Sp(2n, \R)^+$ for $|s| \in (0, \varepsilon)$.
  \item We assume that $\dim \ker \big(\psi(a)-\Id\big)=m$ and
  $\trasp{\psi(a)}J \psi'(a)\vert_V$ is non-singular for $V\= \psi^{-1}(a) \R^{2m}$. If
  $\ind\big({\trasp{\psi(a)}J \psi'(a)\vert_V}\big)$ is even [resp. odd] then there exists $\delta>0$
  sufficiently small such that  $\psi(s) \in \Sp(2n, \R)^+$
  [resp. $\psi(s) \in \Sp(2n, \R)^-$] for $|s| \in (0, \delta)$.
  \end{enumerate}
\end{lem}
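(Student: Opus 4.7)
The plan is to reduce both claims to tracking the sign of $\det(\psi(s)-\Id)$ for $s$ near $a$, since this sign is exactly what distinguishes $\Sp(2n,\R)^+$ from $\Sp(2n,\R)^-$.

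For claim $(1)$, I would first exploit the structure of the spectrum of $\psi(a)$. Linear stability means $\sigma(\psi(a))\subset\{|\lambda|=1\}$, and the hypothesis $1\notin\sigma(\psi(a))$ ensures $\psi(a)\in\Sp(2n,\R)^*$. Factoring
\[ \det(\psi(a)-\Id)=\prod_{\lambda\in\sigma(\psi(a))}(\lambda-1)^{m(\lambda)}, \]
non-real eigenvalues come in complex conjugate pairs and contribute positive factors $|\lambda-1|^{2m(\lambda)}>0$. The only possible remaining real eigenvalue is $-1$, and the symplectic pairing $\lambda\leftrightarrow\lambda^{-1}$ combined with $\det\psi(a)=1$ forces its algebraic multiplicity to be even, so $(-2)^{\text{even}}>0$. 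Hence $\det(\psi(a)-\Id)>0$, i.e. $\psi(a)\in\Sp(2n,\R)^+$. Continuity of the determinant then propagates this positivity to a neighborhood $|s-a|<\varepsilon$.

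For claim $(2)$, $\psi(a)\in\Sp(2n,\R)^0$ and a perturbative analysis is needed. My plan is to perform a $\psi(a)$-invariant splitting $\R^{2n}=V\oplus V^c$ where $V$ is the generalized eigenspace of $\psi(a)$ for the eigenvalue $1$ (necessarily of even dimension $2m$) and $V^c$ is the sum of the other generalized eigenspaces. On $V^c$ the block $\psi(a)|_{V^c}-\Id$ is invertible with positive determinant by the argument of claim $(1)$, and by continuity this factor remains positive and bounded away from $0$ for small $|s-a|$. The sign of $\det(\psi(s)-\Id)$ is therefore controlled, to leading order, by the $V$-block. Writing the Taylor expansion $\psi(a+s)|_V=\psi(a)|_V+s\,\psi'(a)|_V+O(s^2)$ and differentiating the symplectic identity $\trasp{\psi(t)}J\psi(t)=J$ at $t=a$, the form $Q\=\trasp{\psi(a)}J\psi'(a)|_V$ is seen to be symmetric and is precisely the crossing form of the path at $a$. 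A leading-order expansion of the $V$-block determinant then yields $\det(\psi(a+s)|_V-\Id)\sim c\,s^{2m}\det Q$ for some constant $c>0$; since the exponent $2m$ is even, the sign of $\det(\psi(a+s)-\Id)$ for small $|s|>0$ equals $(-1)^{\ind Q}$, giving the claimed dichotomy.

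The main obstacle is justifying the leading-order expansion of the $V$-block determinant when $\psi(a)|_V$ has non-trivial Jordan structure, i.e. when $N\=\psi(a)|_V-\Id|_V$ is a non-zero nilpotent part. In that case the $s$-linear perturbation $s\,\psi'(a)|_V$ competes with $N$, and the identification of the coefficient $c>0$ together with the factor $\det Q$ must be extracted carefully. I would handle this either via Kato's analytic perturbation theory, tracking the branches of eigenvalues of $\psi(a+s)$ bifurcating from $\lambda=1$ and computing their first non-vanishing Puiseux coefficients, or via a Schur-complement reduction performed in a Jordan-adapted basis of $\psi(a)|_V$. In both approaches the symplectic constraint $\trasp{\psi(a)}J\psi(a)=J$ is the key input that identifies $Q$ as the natural crossing form governing the bifurcation and hence the membership of $\psi(s)$ in $\Sp(2n,\R)^+$ or $\Sp(2n,\R)^-$.
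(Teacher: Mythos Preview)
The paper does not prove this lemma; it is quoted from \cite[Lemma~3.2]{HS10} and used as a black box in the subsequent Lemma~\ref{lem:linear stable is in positive side}. So there is no in-paper proof to compare against, and your proposal has to be assessed on its own.

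Your argument for~(1) is correct and is the standard one: conjugate pairs on the unit circle contribute positive factors to $\det(\psi(a)-\Id)$, and the eigenvalue $-1$ (if present) has even algebraic multiplicity for any symplectic matrix, so the determinant is positive and continuity finishes.

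For~(2), the obstacle you flag is not actually present. In this paper ``linearly stable'' means, by definition, that the matrix is \emph{diagonalizable} with spectrum on the unit circle (see the parenthetical in the statement of Lemma~\ref{lem:linear stable is in positive side}). Consequently the generalized $1$-eigenspace coincides with $\ker(\psi(a)-\Id)$, has dimension $m$ (not $2m$ as you write), and carries no nilpotent part $N$; moreover $m$ itself is even by the same symplectic-multiplicity argument used in~(1), so the parity conclusion survives the corrected exponent. With diagonalizability in hand your Schur-complement route is clean: in the invariant symplectic splitting $V\oplus V^\omega$ with $V=\ker(\psi(a)-\Id)$, the $V^\omega$-block of $\psi(a+s)-\Id$ has positive determinant for small $s$ by part~(1), while the Schur complement on $V$ equals $s\,P_V\psi'(a)|_V+O(s^2)$. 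Since $\psi(a)|_V=\Id$, the crossing form $Q$ on $V$ is represented, in any basis, by $\Omega_V\cdot P_V\psi'(a)|_V$, where $\Omega_V$ is the Gram matrix of $\omega|_V$; its determinant is a positive square (the Pfaffian squared), so $\sgn\det(P_V\psi'(a)|_V)=\sgn\det Q=(-1)^{\ind Q}$, and the dichotomy follows. Kato's analytic perturbation theory and Puiseux expansions are unnecessary here.
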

\begin{rem}
Knowing that $M \in \Sp^0(2n,\R)$, without any further information, it is not possible a priori to locate in which path connected components of $\Sp(2n, \R)^*$ is located the perturbed  matrix $e^{\pm \delta J}M$ for arbitrarily small positive  $\delta$. However if $M$ is linearly stable, we get the following result.
\end{rem} 
\begin{lem}\label{lem:linear stable is in positive side}
 Let $M\in \Sp(2n, \R) $ be a linearly stable symplectic matrix (meaning that $\spec{M} \subset \U$ and $M$ is diagonalizable). 
 Then, there exists $\delta >0$ sufficiently small such that $e^{\pm \delta J}M \in \Sp(2n,\R)^+$.
\end{lem}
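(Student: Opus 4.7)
The plan is to apply Lemma \ref{prop:how to know in which component} to the $\mathscr{C}^1$ symplectic path $\psi : s \mapsto e^{sJ}M$ at the base point $s=0$. By hypothesis $\psi(0) = M$ is linearly stable, which is precisely the standing assumption of that lemma. Once the lemma is applied, taking $s = \pm\delta$ will deliver $e^{\pm\delta J}M \in \Sp(2n,\R)^+$. In particular, if $1 \notin \sigma(M)$, part $(1)$ of the lemma already gives $\psi(s) \in \Sp(2n,\R)^+$ for all sufficiently small $|s|>0$ and the proof is complete.

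Suppose now that $1 \in \sigma(M)$, and set $V \= \ker(M-\Id)$ with $m \= \dim V \geq 1$. Differentiation yields $\psi'(0) = JM$, so that
\[
\trasp{\psi(0)}\,J\,\psi'(0) \;=\; \trasp{M}\, J\cdot JM \;=\; -\trasp{M}\,M.
\]
For any $v \in V\setminus\{0\}$ one has $Mv = v$, hence $\trasp{v}(-\trasp{M} M)v = -\|Mv\|^2 = -\|v\|^2 < 0$. Thus the quadratic form $\trasp{\psi(0)} J \psi'(0)\vert_V$ is negative definite, and therefore nonsingular, with Morse index equal to $m$.

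The remaining task is to check that $m$ is even, so that the parity hypothesis of part $(2)$ of Lemma \ref{prop:how to know in which component} is fulfilled and delivers $\psi(s) \in \Sp(2n,\R)^+$ for $|s| \in (0,\delta)$. This is where diagonalizability enters: for any two eigenvectors $u,v$ of the symplectic matrix $M$ with eigenvalues $\lambda,\mu$, the identity $\omega_0(u,v) = \omega_0(Mu,Mv) = \lambda\mu\,\omega_0(u,v)$ forces $\omega_0(u,v)=0$ whenever $\lambda\mu \neq 1$. Applied with $\lambda = 1$, this shows that $V$ is $\omega_0$-orthogonal to the direct sum of the eigenspaces for all $\mu\neq 1$; since $M$ is diagonalizable, this complementary sum has dimension $2n-m$ and must therefore coincide with $V^{\omega_0}$, so that $V \cap V^{\omega_0}=\{0\}$. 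Hence $V$ is a symplectic subspace of $(\R^{2n},\omega_0)$ and $m$ is even.

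The main obstacle I anticipate is precisely this even-dimensionality of $V$: both the diagonalizability and the symplecticity of $M$ are needed to ensure that $\omega_0$ restricts non-degenerately to $V$. Once $m$ is known to be even, the proof concludes with a single appeal to Lemma \ref{prop:how to know in which component}$(2)$, applied to the path $s \mapsto e^{sJ}M$ evaluated at $s = \pm\delta$.
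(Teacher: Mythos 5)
Your proof is correct, and it rests on the same key tool as the paper's: an application of Lemma~\ref{prop:how to know in which component} to a path of the form $\theta\mapsto e^{c\theta J}M$. The only difference is the sign of $c$, and it is not an innocent one. The paper takes $M(\theta)=e^{-\theta J}M$, for which the crossing form is $\trasp{M(0)}J M'(0)=\trasp{M}M$, positive definite on all of $\R^{2n}$ and hence of Morse index $0$ on $V=\ker(M-\Id)$; parity is then immediate and part (2) of the cited lemma applies at once (and since the conclusion of that lemma covers $|s|\in(0,\delta)$, both signs $e^{\pm\delta J}M$ come out of a single application, just as in your version). Your choice $\psi(s)=e^{sJ}M$ produces instead $-\trasp{M}M$, negative definite, with index $m=\dim V$ on $V$, so you are forced to prove separately that $m$ is even. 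Your argument for this — distinct eigenspaces of a symplectic matrix are $\omega_0$-orthogonal unless the eigenvalues multiply to $1$, so diagonalizability forces $V^{\omega_0}=\bigoplus_{\mu\neq 1}E_\mu$ and hence $V$ symplectic — is correct and is in fact a nice consistency check on the lemma (the two paths parametrize the same set of nearby matrices, so the two indices $0$ and $m$ must have the same parity). Still, it is an avoidable detour: had you perturbed in the other direction you would have obtained a positive definite form and skipped the eigenspace analysis entirely.
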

\begin{proof}
Let us consider the  symplectic path pointwise defined by
 $M(\theta)\= e^{-\theta J}M$. By a direct computation we get that
\[
 \trasp{M(\theta)} J \dfrac{d}{d\theta} M(\theta)\Big\vert_{\theta=0} =
\trasp{M}M.
\]
We observe that $\trasp{M}M$ is symmetric and positive semi-definite; moreover since $M$ invertible it follows that
$\trasp{M}M$ is actually positive definite. Thus, in particular,
$\iiindex(\trasp{M}M)=0$. By invoking Lemma \ref{prop:how to know in which component} it follows
that there exists $\delta >0$ such that $M(\pm \delta) \in \Sp(2n, \R)^+ $. This concludes the proof.
\end{proof}

\subsection{On the spectral flow}\label{subsec:sf}
The aim of this subesection is to briefly recall the Definition and the main
properties of the spectral
flow for a continuous path of closed self-adjoint Fredholm operators  $\mathcal{CF}^{sa}(\mathcal H)$ on the Hilbert space $H$.
Our basic reference is \cite{HPY19}  and references therein.

Let $\mathcal H$ be a separable complex Hilbert space and let
$A: \mathcal D(A) \subset \mathcal H \to \mathcal H$ be  a  self-adjoint
Fredholm
operator. By the Spectral decomposition Theorem (cf., for instance,
\cite[Chapter III,
Theorem 6.17]{Kat80}), there is an orthogonal decomposition $
 \mathcal H= E_-(A)\oplus E_0(A) \oplus E_+(A),$
that reduces the operator $A$
and has the property that
\[
 \sigma(A) \cap (-\infty,0)=\sigma\big(A_{E_-(A)}\big), \quad
 \sigma(A) \cap \{0\}=\sigma\big(A_{E_0(A)}\big),\quad
 \sigma(A) \cap (0,+\infty)=\sigma\big(A_{E_+(A)}\big).
\]
\begin{defn}\label{def:essential}
Let $A \in \mathcal{CF}^{sa}(\mathcal H)$. We  term $A$ {\em essentially
positive\/}
if $\sigma_{ess}(A)\subset (0,+\infty)$, {\em essentially negative\/} if
$\sigma_{ess}(A)\subset (-\infty,0)$ and finally
{\em strongly indefinite\/} respectively if $\sigma_{ess}(A) \cap (-\infty,
0)\not=
\emptyset$ and $\sigma_{ess}(A) \cap ( 0,+\infty)\not=\emptyset$.
\end{defn}
\noindent
If $\dim E_-(A)<\infty$,
we define its {\em Morse index\/}
as the integer denoted by $\iindex{A}$ and defined as $
 \iindex{A} \= \dim E_-(A).$
Given $A \in\cfsa(\mathcal H)$, for  $a,b \notin
\sigma(A)$ we set
\[
\mathcal P_{[a,b]}(A)\=\Real\left(\dfrac{1}{2\pi\, i}\int_\gamma
(\lambda-A)^{-1} d\, \lambda\right)
\]
where $\gamma$ is the circle of radius $\frac{b-a}{2}$ around the point
$\frac{a+b}{2}$. We recall that if
$[a,b]\subset \sigma(A)$ consists of  isolated eigenvalues of finite type then
$
 \Imm \mathcal P_{[a,b]}(A)= E_{[a,b]}(A)\= \bigoplus_{\lambda \in (a,b)}\ker
(\lambda -A);
$
(cf. \cite[Section XV.2]{GGK90}, for instance) and $0$ either belongs in the
resolvent set of $A$ or it is an isolated eigenvalue of finite multiplicity.
Let us now consider the {\em graph distance topology\/} which is the topology
induced by the {\em gap
metric\/} $d_G(A_1, A_2)\=\norm{P_1-P_2}$
where $P_i$ is the orthogonal projection onto the graph of $A_i$ in the product space
$\mathcal H
\times \mathcal H$. The next result allow us to  define the spectral flow for
gap
continuous paths in  $\cfsa(\mathcal H)$.
\begin{prop}\label{thm:cor2.3}
 Let $A_0 \in \cfsa(\mathcal H)$ be fixed.
 \begin{enumerate}
  \item[(i)] There exists a positive real number $a \notin \sigma(A_0)$ and an
open neighborhood $\mathscr N \subset  \cfsa(\mathcal H)$ of $A_0$ in the gap
topology such that $\pm a \notin
\sigma(A)$ for all $A \in  \mathscr N$ and the map
 \[
  \mathscr N \ni A \longmapsto \mathcal P_{[-a,a]}(A) \in \Lin(\mathcal H)
 \]
is continuous and the projection $\mathcal P_{[-a,a]}(A)$ has constant finite
rank for all $t \in \mathscr N$.
 \item[(ii)] If $\mathscr N$ is a neighborhood as in (i) and $-a \leq c \leq d
\leq a$ are such that $c,d \notin
 \sigma(A)$ for all $A \in \mathscr N$, then $A \mapsto \mathcal P_{[c,d]}(A)$
is
continuous on $\mathscr N$.
 Moreover the rank of $\mathcal P_{[c,d]}(A) \in \mathscr N$ is finite and
constant.
 \end{enumerate}
\end{prop}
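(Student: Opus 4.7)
The plan is to reduce both statements to two standard ingredients: the Riesz projection formula and the gap-continuity of the resolvent at points of the resolvent set of $A_0$. For part~(i), I would first choose the radius $a$. Since $A_0$ is self-adjoint Fredholm, $0$ is either in the resolvent set of $A_0$ or an isolated eigenvalue of finite algebraic multiplicity, so there exists $a>0$ with $\sigma(A_0)\cap[-a,a]\subseteq\{0\}$ and $\pm a \notin\sigma(A_0)$. With this choice $\mathcal P_{[-a,a]}(A_0)$ is the orthogonal projection onto $\ker A_0$, which is finite-dimensional because $A_0$ is Fredholm.

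Next, I would invoke the classical fact (cf.\ \cite[Chap.~IV]{Kat80}) that the map $A\mapsto(A-\lambda)^{-1}$, regarded as a map from $\cfsa(\mathcal H)$ with the gap topology into $\Lin(\mathcal H)$ with the operator norm, is continuous at every $\lambda$ in the resolvent set of $A_0$. Combining this with a compactness argument on the circle $\gamma_a\=\{\lambda\in\C : |\lambda|=a\}$ produces an open gap neighborhood $\mathscr N$ of $A_0$ such that $\gamma_a$ lies in the resolvent set of every $A\in\mathscr N$ and $\mathscr N\times\gamma_a\ni(A,\lambda)\mapsto(A-\lambda)^{-1}$ is continuous in the operator norm, uniformly in $\lambda$. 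Plugging this into the defining Riesz integral of $\mathcal P_{[-a,a]}(A)$ from the excerpt immediately yields the continuity of $A\mapsto\mathcal P_{[-a,a]}(A)$ on $\mathscr N$.

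For the constant-rank statement I would rely on the standard fact that two orthogonal projections $P,Q$ with $\|P-Q\|<1$ are unitarily equivalent, hence have equal rank. Shrinking $\mathscr N$ so that $\|\mathcal P_{[-a,a]}(A)-\mathcal P_{[-a,a]}(A_0)\|<1$ throughout $\mathscr N$, the rank is constant and equal to $\dim\ker A_0<\infty$. Part~(ii) then follows by running the same three steps on a contour $\Gamma$ encircling $[c,d]$ but bounded away from the (closed) spectra of the nearby operators $A$; such a $\Gamma$ can be chosen locally near every point of $\mathscr N$ thanks to the hypothesis $c,d\notin\sigma(A)$ together with the gap-continuity of the spectrum, and the finite rank of $\mathcal P_{[c,d]}(A)$ is inherited from that of $\mathcal P_{[-a,a]}(A)$ since $[c,d]\subseteq[-a,a]$ forces the range of $\mathcal P_{[c,d]}(A)$ to sit inside the range of $\mathcal P_{[-a,a]}(A)$.

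The main technical obstacle is the continuity of the resolvent in the gap topology for unbounded self-adjoint Fredholm operators; this is non-trivial but is a classical theorem. Once granted, everything reduces to bookkeeping around the Riesz projection formula and the elementary fact that nearby orthogonal projections have the same rank.
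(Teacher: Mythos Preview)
Your outline is correct and is essentially the standard argument; the paper itself does not give a proof at all but simply cites \cite[Proposition 2.10]{BLP05}, whose proof proceeds along exactly the lines you sketch (Riesz integral, gap-continuity of the resolvent, and stability of the rank under small perturbations of projections). One minor simplification you can make in part~(ii): since every $A\in\mathscr N$ is self-adjoint, its spectrum is real, so any contour around $[c,d]$ that meets $\R$ only at $c$ and $d$ lies in the resolvent set of \emph{every} $A\in\mathscr N$ simultaneously; there is no need to choose $\Gamma$ locally.
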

\begin{proof}
For the proof of this result we refer the interested reader to
\cite[Proposition 2.10]{BLP05}.
\end{proof}
Let $\mathcal A:[a,b] \to \cfsa(\mathcal H)$ be a gap continuous path.  As
consequence
of Proposition \ref{thm:cor2.3}, for every $t \in [a,b]$ there exists $a>0$ and
an open
connected neighborhood $\mathscr N_{t,a} \subset \cfsa(\mathcal H)$ of
$\mathcal
A(t)$
such that $\pm a \notin \sigma(A)$ for all $A\in \mathscr N_{t,a}$ and the map
$\mathscr N_{t,a} \in A \longmapsto \mathcal P_{[-a,a]}(A) \in \mathcal
B$
is continuous and hence $ \rk\left(\mathcal P_{[-a,a]}(A)\right)$ does not
depends on $A \in \mathscr N_{t,a}$. Let us consider the open covering
of the interval $[a,b]$ given by the
pre-images of the neighborhoods $\mathcal
N_{t,a}$ through $\mathcal A$ and, by choosing a sufficiently fine partition of
the interval $[a,b]$ having diameter less than the Lebesgue number
of the covering, we can find  $a=:t_0 < t_1 < \dots < t_n:=b$,
operators $T_i \in \cfsa(\mathcal H)$ and
positive real numbers $a_i $, $i=1, \dots , n$ in such a way the restriction of
the path $\mathcal A$ on the
interval $[t_{i-1}, t_i]$ lies in the neighborhood $\mathscr N_{t_i, a_i}$ and
hence the
$\dim E_{[-a_i, a_i]}(\mathcal A_t)$ is constant for $t \in [t_{i-1},t_i]$,
$i=1, \dots,n$.
\begin{defn}\label{def:spectral-flow-unb}
The \emph{spectral flow of $\mathcal A$} (on the interval $[a,b]$) is defined by
\[
 \spfl(\mathcal A, [a,b])\=\sum_{i=1}^N \dim\,E_{[0,a_i]}(\mathcal A_{t_i})-
 \dim\,E_{[0,a_i]}(\mathcal A_{t_{i-1}}) \in \Z.
\]
\end{defn}
(In shorthand Notation we  denote  $\spfl(\mathcal A, [a,b])$ simply  by
$\spfl(\mathcal A)$ if no confusion  is possible).
The spectral flow as given in Definition \ref{def:spectral-flow-unb} is
well-defined
(in the sense that it is independent either on the partition or on the $a_i$)
and only depends on
the continuous path $\mathcal A$. Here We list one of the useful properties of the spectral flow.
\begin{itemize}
 \item[]  {\bf (Path Additivity)\/} If $\mathcal A_1,\mathcal
A_2: [a,b] \to
 \cfsa(\mathcal H)$ are two continuous path such that
$\mathcal A_1(b)=\mathcal A_2(a)$, then
 $
  \spfl(\mathcal A_1 *\mathcal A_2) = \spfl(\mathcal A_1)+\spfl(\mathcal A_2).
 $
\end{itemize}
As already observed, the spectral flow, in general,  depends on the whole path
and not
just on the ends. However, if the path has a special form, it actually depends
on the
end-points. More precisely, let  $\mathcal A ,\mathcal B\in \cfsa(\mathcal H)$
and let $\widetilde{\mathcal A}:[a,b] \to \cfsa(\mathcal H)$ be the path
pointwise defined by $\widetilde{\mathcal A}(t)\=\mathcal A+ \widetilde{\mathcal
B}(t)$  where $
\widetilde{\mathcal B}$ is any continuous curve of $\mathcal A$-compact
operators parametrised on $[a,b]$
such that  $\widetilde{\mathcal B}(a)\=0$ and  $ \widetilde{\mathcal B}(b)\=
\mathcal B$. In this case,
the spectral flow depends of the
path $\widetilde A$, only on the endpoints (cfr. \cite{ZL99} and reference
therein).
\begin{rem}
 It is worth noticing that, since every operator $\widetilde{\mathcal A}(t)$ is
a compact perturbation of a
 a fixed one, the path $\widetilde{\mathcal A}$ is actually a continuous path
into $\Lin(\mathcal W; \mathcal H)$,
 where $\mathcal W\=\mathcal D(\mathcal A)$.
\end{rem}
\begin{defn}\label{def:rel-morse-index}(\cite[Definition 2.8]{ZL99}).
 Let $\mathcal A ,\mathcal B\in \cfsa(\mathcal H)$ and we assume that $\mathcal
B$ is
 $\mathcal A$-compact (in the sense specified above). Then the
{\em  relative Morse index of the pair $\mathcal A$, $\mathcal A+\mathcal B$\/}
is
defined by $
  \irel(\mathcal A, \mathcal A+\mathcal B)=-\spfl(\widetilde{\mathcal A};[a,b])$
where $\widetilde{\mathcal A}\=\mathcal A+ \widetilde{\mathcal B}(t)$ and where
$
\widetilde{\mathcal B}$ is any continuous curve parametrised on $[a,b]$
of $\mathcal A$-compact operators such that
$\widetilde{\mathcal B}(a)\=0$ and
$ \widetilde{\mathcal B}(b)\= \mathcal B$.
\end{defn}
\noindent
In the special case in which the Morse index of both operators $\mathcal A$ and
$\mathcal A+\mathcal B$ are
finite, then
\begin{equation}\label{eq:miserve}
\irel(\mathcal A, \mathcal A+\mathcal B)=\iindex{\mathcal A +\mathcal
B}-\iindex{\mathcal A}.
\end{equation}

Let  $\mathcal W, \mathcal H$ be separable Hilbert spaces with a dense and
continuous
inclusion $\mathcal W \hookrightarrow \mathcal H$ and let
$\mathcal A:[a,b] \to \cfsa(\mathcal H)$  having fixed domain $\mathcal W$. We
assume that $\mathcal A$ is
a continuously differentiable path  $\mathcal A: [a,b] \to \cfsa(\mathcal H)$
and
we denote by $\dot{\mathcal A}_{\lambda_0}$ the derivative of
$\mathcal A_\lambda$ with respect to the parameter $\lambda \in [a,b]$ at
$\lambda_0$.
\begin{defn}\label{def:crossing-point}
 An instant $\lambda_0 \in [a,b]$ is called a {\em crossing instant\/} if
$\ker\, \mathcal A_{\lambda_0} \neq 0$. The
 crossing form at $\lambda_0$ is the quadratic form defined by
\begin{equation}
 \Gamma(\mathcal A, \lambda_0): \ker \mathcal A_{\lambda_0} \to \R, \quad
\Gamma(\mathcal A, \lambda_0)[u] =
\langle \dot{\mathcal A}_{\lambda_0}\, u, u\rangle_\mathcal H.
\end{equation}
Moreover a  crossing $\lambda_0$ is called {\em regular\/}, if $\Gamma(\mathcal
A, \lambda_0)$ is non-degenerate.
\end{defn}
We recall that there exists $\varepsilon >0$ such that   $\mathcal A +\delta \,
\Id_\mathcal H$ has only regular crossings
  for almost every $\delta \in (-\varepsilon, \varepsilon)$. In the special case in which all crossings are regular, then the spectral flow
can be easily computed through  the
crossing forms. More precisely the following result  holds.
\begin{prop}\label{thm:spectral-flow-formula}
 If $\mathcal A:[a,b] \to \cfsa(\mathcal W, \mathcal H)$ has only regular
crossings then they are in a finite
 number and
 \[
  \spfl(\mathcal A, [a,b]) = -\mathrm{n_-}{\left[\Gamma(\mathcal A,a)\right]}+
\sum_{t_0 \in (a,b)}
  \sgn\left[\Gamma(\mathcal A, t_0)\right]+
  \coiindex{\Gamma(\mathcal A,b)}
 \]
where the sum runs over all the crossing instants.
\end{prop}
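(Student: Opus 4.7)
The plan is to reduce the computation of the spectral flow to a local, essentially finite-dimensional analysis near each crossing, and then to read off the contribution of each crossing directly from the crossing form. I split the argument into four steps.

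First, I would show that regular crossings are isolated, hence finite in number on the compact interval $[a,b]$. Indeed, at a crossing $t_0$, Proposition~\ref{thm:cor2.3} produces $\varepsilon>0$ and a neighborhood of $\mathcal{A}_{t_0}$ in which the spectral projection $\mathcal{P}_{[-\varepsilon,\varepsilon]}(\mathcal{A}_t)$ has constant finite rank $k=\dim\ker\mathcal{A}_{t_0}$ and depends continuously (in fact $\mathscr{C}^1$, because $t\mapsto\mathcal{A}_t$ is $\mathscr{C}^1$ into $\mathscr{L}(\mathcal{W},\mathcal{H})$) on $t$. Restricting $\mathcal{A}_t$ to the moving finite-dimensional range $V_t\=\Imm \mathcal{P}_{[-\varepsilon,\varepsilon]}(\mathcal{A}_t)$ yields a $\mathscr{C}^1$ path of symmetric $k\times k$ matrices $a(t)$ whose eigenvalues $\lambda_1(t),\dots,\lambda_k(t)$ encode all the spectrum of $\mathcal{A}_t$ close to $0$. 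At $t_0$ one has $a(t_0)=0$ (on $V_{t_0}=\ker\mathcal{A}_{t_0}$) and the first-order perturbation formula identifies $\dot a(t_0)$ with the crossing form $\Gamma(\mathcal{A},t_0)$; regularity of the crossing therefore forces each $\dot\lambda_i(t_0)\neq 0$, which shows that no eigenvalue returns to $0$ for $t\neq t_0$ near $t_0$, so $t_0$ is isolated.

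Second, by path additivity I split $[a,b]$ into finitely many subintervals each containing at most one crossing, located either in the interior of the subinterval or at one of its endpoints, so that on each subinterval the analysis above applies with a single $V_t$. On such a subinterval Definition~\ref{def:spectral-flow-unb} says that $\spfl(\mathcal{A},[t_{i-1},t_i])$ equals the net change of $\dim E_{[0,a_i]}(\mathcal{A}_t)$, and because the spectrum outside $[-a_i,a_i]$ does not cross $0$ this change coincides with the net change of the number of eigenvalues $\lambda_j(t)\in[0,a_i]$ of the finite-dimensional restriction.

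Third, I compute the contribution of each crossing from the derivative information. For an interior crossing $t_0$ each eigenvalue $\lambda_j$ with $\dot\lambda_j(t_0)>0$ moves from $<0$ to $>0$ as $t$ passes $t_0$, hence enters $[0,a_i]$ and contributes $+1$; each eigenvalue with $\dot\lambda_j(t_0)<0$ leaves $[0,a_i]$ and contributes $-1$. The total is $\coindex[\Gamma(\mathcal{A},t_0)]-\iiindex[\Gamma(\mathcal{A},t_0)]=\sgn[\Gamma(\mathcal{A},t_0)]$. For a crossing located at the left endpoint $a$, every $\lambda_j(a)=0$ is counted in the initial $\dim E_{[0,a_1]}$; those with $\dot\lambda_j(a)>0$ remain in $[0,a_1]$ (no change), while those with $\dot\lambda_j(a)<0$ leave, contributing $-1$ each, giving the total $-\iiindex[\Gamma(\mathcal{A},a)]$. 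Symmetrically, at the right endpoint $b$ the eigenvalues with $\dot\lambda_j(b)>0$ enter $[0,a_n]$ just at $t=b$ contributing $+1$ each, while those with $\dot\lambda_j(b)<0$ were already in $[0,a_n]$, yielding the total $\coindex[\Gamma(\mathcal{A},b)]$.

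Summing the interior contributions and the two endpoint contributions via path additivity produces the claimed formula. The main technical obstacle in this argument is the justification of the $\mathscr{C}^1$ dependence of the eigenvalue branches $\lambda_j(t)$ and the identification of $\dot\lambda_j(t_0)$ with the eigenvalues of $\Gamma(\mathcal{A},t_0)$; this is a standard application of Kato's analytic perturbation theory for self-adjoint operators with isolated eigenvalues of finite type, once the reduction to the finite-dimensional space $V_t$ supplied by Proposition~\ref{thm:cor2.3} has been carried out, and I would cite \cite[Chapter II]{Kat80} for these facts rather than reprove them.
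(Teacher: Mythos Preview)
Your proof plan is correct and is precisely the Robbin--Salamon argument the paper invokes: the paper itself does not give a proof but simply refers to \cite{RS95}, and your four-step reduction (finite-dimensional localization via the spectral projection of Proposition~\ref{thm:cor2.3}, isolation of regular crossings from non-degeneracy of $\dot a(t_0)$, path additivity, and the signed eigenvalue count at interior and endpoint crossings) is exactly how that reference proceeds. There is nothing to add.
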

\begin{proof}
 The proof of this result follows by arguing as in \cite{RS95}. This conclude
the proof.
\end{proof}


\vspace{1cm}
\noindent
\textsc{Prof. Alessandro Portaluri}\\
DISAFA\\
Università degli Studi di Torino\\
Largo Paolo Braccini 2 \\
10095 Grugliasco, Torino\\
Italy\\
Website: \url{https://sites.google.com/view/alessandro-portaluri/}\\
E-mail: \email{alessandro.portaluri@unito.it}

\vspace{1cm}
\noindent
\textsc{Prof. Li Wu}\\
Department of Mathematics\\
Shandong University\\
Jinan,Shandong, 250100\\
The People's Republic of China \\
China\\
E-mail: \email{vvvli@sdu.edu.cn}

\end{document}